\title[Integrable Complex Structures on Twistor Spaces]{Integrable Complex Structures on Twistor Spaces}
\author{Steven Gindi}
\newtheorem{thm}{Theorem}[section]
\newtheorem{lemma}[thm]{Lemma}
\newtheorem{prop}[thm]{Proposition}
\newtheorem{cor}[thm]{Corollary}
\newtheorem{rmk}[thm]{Remark} 
\newtheorem{nota}[thm]{Notation}
\theoremstyle{definition} \newtheorem{example}[thm]{Example}
\theoremstyle{definition}  \newtheorem{defi}[thm]{Definition}
\numberwithin{equation}{section}
\begin{document}
\begin{large}

\begin{abstract}
We introduce integrable complex structures on twistor spaces fibered over complex manifolds. We then show, in particular, that the twistor spaces associated with generalized Kahler, SKT and strong HKT manifolds all naturally admit complex structures. Moreover, in the strong HKT case we construct a metric and three compatible complex structures on the twistor space that have equal torsions. 
\end{abstract}

\maketitle
\tableofcontents

\section{Introduction}
In the 1970's, Atiyah, Hitchin and Singer introduced a tautological almost complex structure on a certain twistor space that along with its generalizations have had, until today, a major impact on differential and complex geometry \cite{Ahs1,Pen1}. The twistor space that they considered was $\mathcal{T}^{+}(TM)$, the bundle of complex structures fibered over an oriented Riemannian four manifold that are compatible with the metric and orientation; the almost complex structure was $J^{\nabla}_{taut}$, where $\nabla$ is the Levi Civita connection, and is defined in Section \ref{SecDCS}. The importance of $J^{\nabla}_{taut}$ was found to lie in the times when it was integrable, which was when the four manifold was anti-selfdual, and one of its many applications was the construction of instantons on $S^{4}$ \cite{Adhm1}.

Given its success in four dimensions,  $J^{\nabla}_{taut}$ was generalized in \cite{Ber1,Raw1} to the twistor space $\mathcal{C}(TM)= \{J \in EndTM | \ J^{2}=-1 \}$, where $M$ is any even dimensional manifold and $\nabla$ is any connection. Its integrability conditions were then explored with the hope that $J^{\nabla}_{taut}$ would again lead to major results about the base manifold. However, it was found that these conditions imposed severe restrictions on the curvature of the connection and in almost all cases $J^{\nabla}_{taut}$ was not integrable, thus limiting its applications in differential geometry.   

Of course this did not prevent mathematicians from taking advantage of the rare times when it was integrable on either $\mathcal{C}(TM)$ or on submanifolds within, and applying $J^{\nabla}_{taut}$ to advance, for example, the theory of harmonic mappings, integrable systems and hyperkahler geometry.
With all of its successes, however, the fact remains that the rarity of the integrability of $J^{\nabla}_{taut}$ has greatly hindered its use in deriving results about the geometry of the base manifold $M$ in higher dimensions. And it is natural to wonder whether there exist other almost  complex structures on twistor spaces whose integrability conditions are more easily satisfied---especially in every dimension---and at the same time  can  be used to derive results about the base manifold. 

The purpose of this paper is to demonstrate that such almost complex structures do indeed exist if we assume that $M$ is itself equipped with a complex structure $I$. Whereas $\mathcal{J}_{taut}$, which will stand for $J^{\nabla}_{taut}$ for an unspecified connection, can only be defined on twistor spaces 
 that are associated to $TM$, the almost complex structures that we introduce in Section \ref{SecDCS} are defined on more general twistor spaces that are associated to any even rank real vector bundle. Denoting such a  bundle by $E$, in that section, we define the almost complex structure $\mathcal{J}^{(\nabla,I)}$ on $\mathcal{C}(E) = \{ J \in EndE | J^{2}=-1\}$; it  depends on a choice of a connection $\nabla$ on $E$, similar to the definition of $\mathcal{J}_{taut}$. However, unlike $\mathcal{J}_{taut}$, the conditions on the connection $\nabla$ for $\mathcal{J}^{(\nabla,I)}$ to be integrable are easily fulfilled. By computing its Nijenhuis tensor,  we prove in Theorem \ref{Thm1c},  that $\mathcal{J}^{(\nabla,I)}$ on $\mathcal{C}(E)$ is automatically integrable if the curvature of $\nabla$, $R^{\nabla}$, is (1,1) with respect to $I$, i.e., $R^{\nabla}(I\cdot,I\cdot)=R^{\nabla}(\cdot,\cdot)$. Moreover if $g$ is a fiberwise metric on $E$ and $\nabla g=0$ then $\mathcal{J}^{(\nabla,I)}$ on $\mathcal{T}(E,g)= \{ J \in \mathcal{C}(E) | \ g(J\cdot,J\cdot)=g(\cdot,\cdot) \}$ is integrable if and only if $R^{\nabla}$ is (1,1) (Theorem \ref{Prop1c}). Under these conditions, the projection map $\pi: (\mathcal{C}(E),\mathcal{J}^{(\nabla,I)}) \longrightarrow (M,I)$ becomes a holomorphic submersion, a property that would never hold if we were to replace  $\mathcal{J}^{(\nabla,I)}$ by $\mathcal{J}_{taut}$.

While we will use $(\mathcal{C}(E),\mathcal{J}^{(\nabla,I)})$  to derive results about the base manifold $(M,I)$ in \cite{Gindi1}, the focus of our present paper is to describe various examples of vector bundles that admit connections with (1,1) curvature and to study the resulting holomorphic structures on the twistor spaces.  For instance, in Section \ref{SecDOP} we demonstrate that a general holomorphic Hermitian  bundle, $(E,g,J)$, admits many such connections. The Chern connection is of course an example, but as we show, $\overline{\partial}$ closed sections $D \in \Gamma(T^{*0,1}\otimes \wedge^{2}E^{*1,0})$ and $\alpha \in \Gamma(T^{*0,1}\otimes \wedge^{2}E^{1,0})$ can also be used to define connections on $E$ with (1,1) curvature. In the case when $E=TM$, $D$ is a $\overline{\partial}$ closed (2,1) form and our goal in Section \ref{SecTF} is to describe how such forms naturally appear on well known classes of Hermitian manifolds. For example, SKT manifolds, bihermitian manifolds---also known as generalized Kahler manifolds---as well as strong HKT manifolds \cite{Skt2,Rocek1,Apost1,Gualt2, Grant1} all admit $\overline{\partial}$ closed (2,1) forms and thus complex structures on their twistor spaces. 

 In Section \ref{secHSTRUC}, we construct a metric on $\mathcal{T}(TM,g)$ that is compatible with $\mathcal{J}^{(\nabla,I)}$ and compute the associated torsion $d^{c}w^{(\nabla,I)}$ in terms of the curvature and torsion of $\nabla$. ($w^{(\nabla,I)}$ is the fundamental two form.) We then focus on the case when the base manifold is strong HKT and construct a metric and three compatible complex structures on its twistor space that have equal torsions.
 
Given $E \longrightarrow (M,I)$ and a connection $\nabla$ with (1,1) curvature, in Section \ref{SecGRASS} we further study $(\mathcal{C}(E), \mathcal{J}^{(\nabla,I)})$ by holomorphically embedding it into a more familiar complex manifold. The key in finding a suitable manifold is to notice that if we $\mathbb{C}$-linearly extend $\nabla$ to a complex  connection on $E_{\mathbb{C}} := E \otimes_{\mathbb{R}} \mathbb{C}$ then $R^{\nabla}$ is (1,1) if and only if $\nabla^{0,1}$ is a $\overline{\partial}$-operator on $E_{\mathbb{C}}$. Hence given $\nabla$ on $E$ with (1,1) curvature, we have two associated complex analytic manifolds: the first is $(\mathcal{C}(E), \mathcal{J}^{(\nabla,I)})$ and the other is the holomorphic Grassmannian bundle $Gr_{n}(E_{\mathbb{C}})$ ($rankE=2n$), and in Section \ref{SecGRASS} we  holomorphically embed $\mathcal{C}(E)$ into this latter bundle. By then considering $\mathcal{C}(E)$ as a complex submanifold of $Gr_{n}(E_{\mathbb{C}})$, we derive a number of corollaries about the holomorphic structure of twistor spaces. For example, we derive conditions on two connections $\nabla$ and $\nabla'$ that are defined on $E \longrightarrow (M,I)$ with (1,1) curvature, so that the twistor spaces $(\mathcal{C}(E), \mathcal{J}^{(\nabla,I)})$ and $(\mathcal{C}(E), \mathcal{J}^{(\nabla',I)})$ are equivalent under a fiberwise biholomorphism. We then use this to prove that certain complex structures that we defined in Section \ref{SecDOP} on the twistor spaces associated to Hermitian bundles are in fact biholomorphic. As another corollary, given a holomorphic Hermitian bundle $(E,g,J)\longrightarrow (M,I)$ we construct a well defined map from the Dolbeault cohomology group $H^{0,1}(\Lambda^{2} E^{*1,0})$ to the isomorphism classes of complex structures on $\mathcal{T}(E,g)$. Other corollaries of the holomorphic embedding are given in Sections \ref{SecCD} and \ref{SecCTM}.

Having in this paper introduced, given examples and explored different properties of $(\mathcal{C}(E),\mathcal{J}^{(\nabla,I)})$, in \cite{Gindi1} we will use it to study certain Poisson structures on bihermitian manifolds.

\section{Complex Structures on Twistor Spaces}
\subsection{Preliminaries} 
\label{SecPRE}  
Let $V$ be a $2n$ dimensional real vector space and let $\mathcal{C}(V)=\{J \in EndV | \ J^{2}=-1\}$ be one of its twistor spaces. To describe some of the properties of $\mathcal{C}(V)$, consider the action of $GL(V)$ on $EndV$ via conjugation: $B \cdot A= BAB^{-1}.$  As  $\mathcal{C}(V)$ is a particular orbit of this action, it is isomorphic to 
\[GL(V)/GL(V,I),\] where $I\in \mathcal{C}(V)$ and $GL(V,I)= \{B \in GL(V)| \ [B,I]=0\} \cong GL(n,\mathbb{C})$. It then follows that the dimension of $\mathcal{C}(V)$ is $2n^{2}$ and that if we consider $\mathcal{C}(V)$ as a submanifold of $EndV$ then
\[ T_{J}\mathcal{C} = [EndV,J]=\{A \in EndV| \ \{A,J\}=0 \}.\]
With this, we may define a natural almost complex structure on $\mathcal{C}(V)$ that is well known to be integrable:
\[I_{\mathcal{C}}A = JA,  \text{ for } A \in T_{J}\mathcal{C}.\] 

If we now equip $V$ with a positive definite metric $g$ then another twistor space that we will consider is $\mathcal{T}(V,g)=\{J \in \mathcal{C}(V)| \ g(J\cdot,J\cdot)=g(\cdot,\cdot) \}.$ In this case, $\mathcal{T}$ is an orbit of the action of $O(V,g)$ on $EndV$ by conjugation, and is thus isomorphic to the Hermitian symmetric space  $O(V,g)/U(I),$ where $I \in \mathcal{T}$ and $U(I)\cong U(n)$. It then follows that the dimension of $\mathcal{T}$ is $n(n-1)$ and that if we consider $\mathcal{T}$ as a submanifold of $EndV$ then \[ T_{J}\mathcal{T}= [\mathfrak{o}(V,g),J]=\{A \in \mathfrak{o}(V,g) | \ \{A,J\}=0 \}.\] 
As  $I_{\mathcal{C}}$ naturally restricts to $T_{J}\mathcal{T}$, $\mathcal{T}$ is a complex submanifold of $\mathcal{C}$. 

\subsubsection{Twistors of Bundles}
\label{SecTB}
Let now $E \longrightarrow M$ be an even rank real vector bundle fibered over an even dimensional smooth manifold. Generalizing the previous discussion to vector bundles, we will define $\mathcal{C}(E)= \{J \in EndE | \ J^{2}=-1\}$, which is a fiber subbundle of the total space of $\pi:EndE \longrightarrow M$ with general fiber $\mathcal{C}(E_{x})$, for $x \in M$. Since the fibers of $\pi_{\mathcal{C}}:\mathcal{C}(E) \longrightarrow M$ are complex manifolds, $\mathcal{C}(E)$ naturally admits the complex vertical distribution $V\mathcal{C} \subset T\mathcal{C}(E)$, where $V_{J}\mathcal{C}= T_{J}\mathcal{C}(E_{\pi(J)}) \cong [EndE|_{\pi(J)},J]$. Using  the section $\phi \in \Gamma(\pi_{\mathcal{C}}^{*}EndE)$ defined by $\phi|_{J}=J$, we will then identify $V\mathcal{C}$ with the subbundle $[\pi_{\mathcal{C}}^{*}EndE,\phi] $ of $\pi_{\mathcal{C}}^{*}EndE$.

Letting $g$ be a positive definite fiberwise metric on $E$, we will also consider $\mathcal{T}(E,g)= \{J \in \mathcal{C}(E) | \ g(J\cdot,J\cdot)=g(\cdot,\cdot)\}$. Similar to the case of $\mathcal{C}(E)$, $\mathcal{T}(E,g)$ naturally admits the complex vertical distribution $V\mathcal{T}$, defined by $V_{J}\mathcal{T}=T_{J}\mathcal{T}(E_{\pi(J)}) \cong [\mathfrak{o}(E_{\pi(J)},g),J]$. If we denote the projection map from $\mathcal{T}(E,g)$ to $M$ by $\pi_{\mathcal{T}}$ then we will identify $V\mathcal{T}$ with the subbundle $[\pi_{\mathcal{T}}^{*}\mathfrak{o}(E,g),\phi]$ of $\pi_{\mathcal{T}}^{*}EndE$, where now $\phi \in \Gamma(\pi_{\mathcal{T}}^{*}EndE)$.

\begin{nota} As was done above and will be continued below, we will at times denote $\mathcal{C}(E)$ by $\mathcal{C}$ and $\mathcal{T}(E,g)$ by $\mathcal{T}(E)$, $\mathcal{T}(g)$ or just $\mathcal{T}$. Moreover, there are also times when we will denote $\pi_{\mathcal{C}}$ or $\pi_{\mathcal{T}}$ by just $\pi$.  
\end{nota}
\subsection{Horizontal Distributions and Splittings}
\label{SecHD}
With this background, we will now take the first steps in defining integrable complex structures on $\mathcal{C}(E)$ and $\mathcal{T}(E,g)$ in the case when $M$ is a complex manifold. Given a connection $\nabla$ on $E$ we will define the horizontal distribution $H^{\nabla}\mathcal{C}$ in $T\mathcal{C}$, so that this latter bundle splits into $V\mathcal{C} \oplus H^{\nabla}\mathcal{C}$. Similarly, in the case when $g$ is a fiberwise metric on $E$ and $\nabla$ is a metric connection, we will describe how to split $T\mathcal{T}$ into $V\mathcal{T} \oplus H^{\nabla}\mathcal{T}$. Once we have described these splittings we will define the desired complex structures on the above twistor spaces in Section \ref{SecDCS}.

To begin, let, as above, $E \longrightarrow M$ be a vector bundle, though the base manifold is not yet assumed to be a complex manifold, and let $\nabla$ be any connection. As $\mathcal{C}$ is a fiber subbundle of the total space of $\pi:EndE \longrightarrow M$, we will find it convenient to split its tangent bundle by first splitting $TEndE$. 

Although there are other ways to define this splitting the basic idea here is to use parallel translation with respect to $\nabla$.  First, if $A \in EndE$ and $\gamma:\mathbb{R} \longrightarrow M$ satisfies $\gamma(0)=\pi(A)$ then the parallel translate of $A$ along $\gamma$ will be denoted by $A(t)$. The horizontal distribution $H^{\nabla}EndE$ in $TEndE$ is then defined as follows.

\begin{defi}
Let $H^{\nabla}_{A}EndE= \{\frac{dA(t)}{dt}|_{t=0} | \text{ for all } \gamma, \gamma(0)=\pi(A)\}$. 
\end{defi}
It is straightforward to show that $H^{\nabla}EndE$ is a complement to the vertical distribution: 
\begin{lemma}
$TEndE=VEndE \oplus H^{\nabla}EndE.$ \\
\end{lemma}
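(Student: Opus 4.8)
The plan is to show that the parallel-translation prescription defining $H^{\nabla}_{A}EndE$ yields a \emph{linear} subspace that $d\pi$ maps isomorphically onto $T_{\pi(A)}M$, and then to finish by a dimension count against the vertical distribution. The heart of the matter is that the velocity $\frac{dA(t)}{dt}|_{t=0}$ depends only on the initial velocity $v=\dot\gamma(0)$, and depends on it linearly; once this is established, $H^{\nabla}_{A}EndE$ is exactly the image of a well-defined linear map $T_{\pi(A)}M\longrightarrow T_{A}EndE$, $v\mapsto\tilde v$, and is in particular a subspace of the correct dimension.

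First I would fix a local frame $e_{1},\dots,e_{2n}$ for $E$ near $x=\pi(A)$ together with local coordinates on $M$, producing fiber coordinates $A^{b}_{a}$ on $EndE$ via $A(e_{a})=A^{b}_{a}e_{b}$. Writing the parallel transport $P_{t}$ of $E$ along $\gamma$ in terms of the local connection matrix $\Gamma$, one checks that the induced parallel transport on $EndE$ is $A(t)=P_{t}A(0)P_{t}^{-1}$, since $A(t)s(t)$ is $\nabla$-parallel in $E$ whenever both $A$ and $s$ are. Differentiating at $t=0$ and using $\dot P_{0}=-\Gamma(v)$ gives $\frac{dA(t)}{dt}|_{t=0}=[A(0),\Gamma(v)]$, a quantity that manifestly depends only on $v$ and is linear in $v$. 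Thus $\tilde v:=v^{i}\partial_{x^{i}}+[A,\Gamma(v)]^{b}_{a}\,\partial_{A^{b}_{a}}$ defines the asserted linear map, every curve with $\dot\gamma(0)=v$ produces the velocity $\tilde v$, and conversely every $v$ is realized by some curve, so that $H^{\nabla}_{A}EndE=\{\tilde v:v\in T_{x}M\}$ is a linear subspace.

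Next I would observe that $d\pi(\tilde v)=v$, so $d\pi$ restricts to an isomorphism $H^{\nabla}_{A}EndE\xrightarrow{\ \sim\ }T_{x}M$; in particular $\dim H^{\nabla}_{A}EndE=\dim M$, and $H^{\nabla}_{A}EndE\cap V_{A}EndE=0$, since any horizontal vector lying in the vertical space $V_{A}EndE=\ker d\pi_{A}$ must project to $0$ and hence vanish. A dimension count then closes the argument: with $\dim V_{A}EndE=\dim EndE_{x}=(2n)^{2}$, we get $\dim V_{A}EndE+\dim H^{\nabla}_{A}EndE=(2n)^{2}+\dim M=\dim EndE=\dim T_{A}EndE$, so the transverse sum $V_{A}EndE\oplus H^{\nabla}_{A}EndE$ exhausts the full tangent space.

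I expect the genuine obstacle to be the first step — confirming that the set of velocities is a linear subspace rather than merely a subset — which is precisely where the identity $\frac{dA(t)}{dt}|_{t=0}=[A,\Gamma(v)]$ and its linear dependence on $v$ are required; everything afterward is routine linear algebra. One could instead bypass the coordinates entirely by noting that $\nabla$ induces a linear connection on $EndE$ whose parallel transport is conjugation by $P_{t}$, and then invoke the standard fact that the horizontal distribution of a linear connection complements the vertical; the explicit computation above merely makes the statement self-contained.
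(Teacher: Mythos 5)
Your proof is correct and fills in precisely the standard argument that the paper omits (the paper states the lemma as ``straightforward'' and its subsequent Remark points to exactly the two routes you mention: the direct parallel-transport computation and the frame-bundle formulation). The key point you isolate --- that $\frac{dA(t)}{dt}|_{t=0}=[A,\Gamma(v)]$ depends only linearly on $v=\dot\gamma(0)$, so $H^{\nabla}_{A}EndE$ is the image of a linear lift and meets $V_{A}EndE=\ker d\pi_{A}$ trivially --- together with the dimension count is exactly what is needed.
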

\begin{rmk} The above procedure can actually be used to split the tangent bundle of any vector bundle with a connection. Another way to define such a splitting is to consider the bundle as associated to its frame bundle and then use the standard theory of connections. These two methods yield the same splittings and are essentially equivalent.
\end{rmk}

Now if $J \in \mathcal{C} \subset EndE$ and $\gamma:\mathbb{R} \longrightarrow M$ is a curve that satisfies $\gamma(0)=\pi(J)$ then it is clear that the associated parallel translate $J(t)$ lies in $\mathcal{C}$ for all relevant $t \in \mathbb{R}$. It then follows that $H^{\nabla}_{J}EndE$ lies in $T_{J}\mathcal{C}$, so that we have: 

\begin{lemma} 
\label{LemCSPLIT}
$T_{J}\mathcal{C}=V_{J}\mathcal{C} \oplus H^{\nabla}_{J}\mathcal{C}$, where $H^{\nabla}_{J}\mathcal{C}= H^{\nabla}_{J}EndE$.
\end{lemma}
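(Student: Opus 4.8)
The plan is to deduce this splitting of $T_{J}\mathcal{C}$ directly from the corresponding splitting of $T_{J}EndE$ furnished by the previous lemma, using the observation---already noted in the paragraph preceding the statement---that for $J \in \mathcal{C}$ the whole horizontal space $H^{\nabla}_{J}EndE$ is contained in $T_{J}\mathcal{C}$. Since we have \emph{defined} $H^{\nabla}_{J}\mathcal{C}$ to be $H^{\nabla}_{J}EndE$, this containment already means that $H^{\nabla}_{J}\mathcal{C}$ is a genuine subspace of $T_{J}\mathcal{C}$, and it remains only to check that together with $V_{J}\mathcal{C}$ it fills out $T_{J}\mathcal{C}$ as an internal direct sum.

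First I would record the elementary identification $V_{J}\mathcal{C}=V_{J}EndE \cap T_{J}\mathcal{C}$: the vertical space $V_{J}\mathcal{C}$ is by definition the tangent space to the fiber $\mathcal{C}(E|_{\pi(J)})=\mathcal{C}\cap EndE|_{\pi(J)}$, and a tangent vector at $J$ is tangent to this fiber exactly when it is both tangent to $\mathcal{C}$ and vertical in $EndE$. Granting this, triviality of the intersection is immediate: $V_{J}\mathcal{C}\cap H^{\nabla}_{J}\mathcal{C}\subseteq V_{J}EndE \cap H^{\nabla}_{J}EndE=0$, the last equality being the directness asserted in the previous lemma. For the spanning property I would take an arbitrary $X \in T_{J}\mathcal{C}$, regard it as an element of $T_{J}EndE$, and use the previous lemma to write $X=X_{V}+X_{H}$ with $X_{V}\in V_{J}EndE$ and $X_{H}\in H^{\nabla}_{J}EndE$. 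Because $X_{H}\in H^{\nabla}_{J}EndE \subseteq T_{J}\mathcal{C}$ and $X \in T_{J}\mathcal{C}$, the vertical part $X_{V}=X-X_{H}$ lies in $V_{J}EndE \cap T_{J}\mathcal{C}=V_{J}\mathcal{C}$. Hence $X=X_{V}+X_{H}\in V_{J}\mathcal{C}\oplus H^{\nabla}_{J}\mathcal{C}$, which completes the decomposition.

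The only step that requires any real care is the identification $V_{J}\mathcal{C}=V_{J}EndE\cap T_{J}\mathcal{C}$, namely that the vertical tangent vectors to the total space $\mathcal{C}$ are exactly those tangent to the fiber $\mathcal{C}(E|_{\pi(J)})$; this holds because $\mathcal{C}$ is a fiber subbundle of $EndE$, so its fibers are the transverse intersections $\mathcal{C}\cap EndE|_{\pi(J)}$. As a consistency check one can instead argue by dimension: with $\dim_{\mathbb{R}}E=2n$ one has $\dim V_{J}\mathcal{C}=2n^{2}$ and $\dim H^{\nabla}_{J}\mathcal{C}=\dim H^{\nabla}_{J}EndE=\dim M$, so that $\dim V_{J}\mathcal{C}+\dim H^{\nabla}_{J}\mathcal{C}=2n^{2}+\dim M=\dim \mathcal{C}(E)=\dim T_{J}\mathcal{C}$, which together with the trivial intersection already forces the internal direct sum.
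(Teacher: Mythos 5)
Your proof is correct and follows the same route as the paper: the paper's entire argument is the observation, stated in the paragraph preceding the lemma, that parallel translates of $J\in\mathcal{C}$ remain in $\mathcal{C}$, hence $H^{\nabla}_{J}EndE\subseteq T_{J}\mathcal{C}$, after which the splitting of $T_{J}\mathcal{C}$ is inherited from that of $T_{J}EndE$. You have merely written out the routine linear-algebra details (trivial intersection, spanning, and the dimension check) that the paper leaves implicit.
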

Similarly, if $g$ is a fiberwise metric on $E$ and $\nabla g=0$ then the parallel translate of $J \in \mathcal{T}$ along $\gamma$ lies in $\mathcal{T}$. We thus have

\begin{lemma}
\label{LemST}
$T_{J}\mathcal{T}=V_{J}\mathcal{T} \oplus H_{J}^{\nabla}\mathcal{T}$, where $H_{J}^{\nabla}\mathcal{T}= H_{J}^{\nabla}EndE.$
\end{lemma}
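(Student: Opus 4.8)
The plan is to follow the proof of Lemma \ref{LemCSPLIT} essentially verbatim, the only genuinely new input being that the metric compatibility $\nabla g=0$ forces the parallel translate of an element of $\mathcal{T}$ to remain in $\mathcal{T}$. Once this is in hand, the splitting is extracted from the general splitting $T_{J}\mathcal{C}=V_{J}\mathcal{C}\oplus H^{\nabla}_{J}\mathcal{C}$ exactly as before, with everything restricted to $\mathcal{T}$.

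First I would make the induced transport explicit. If $\gamma$ is a curve with $\gamma(0)=\pi(J)$ and $P_{t}\colon E_{\gamma(0)}\to E_{\gamma(t)}$ denotes parallel transport in $E$, then the parallel translate on $EndE$ is $J(t)=P_{t}JP_{t}^{-1}$. The relation $J(t)^{2}=-1$ is automatic, since conjugation is an algebra homomorphism: $J(t)^{2}=P_{t}J^{2}P_{t}^{-1}=-1$; this is precisely the observation already used for Lemma \ref{LemCSPLIT}, giving $J(t)\in\mathcal{C}$. The key additional step is the orthogonality condition. Because $\nabla g=0$, parallel transport is a linear isometry, so for $v,w\in E_{\gamma(t)}$, writing $v'=P_{t}^{-1}v$ and $w'=P_{t}^{-1}w$,
\[
g(J(t)v,J(t)w)=g(P_{t}Jv',P_{t}Jw')=g(Jv',Jw')=g(v',w')=g(P_{t}v',P_{t}w')=g(v,w),
\]
where the second and fourth equalities use that $P_{t}$ is an isometry and the third uses $J\in\mathcal{T}$. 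Hence $J(t)\in\mathcal{T}$ for all relevant $t$, and differentiating at $t=0$ over all such $\gamma$ shows $H^{\nabla}_{J}EndE\subseteq T_{J}\mathcal{T}$; this is what justifies the definition $H^{\nabla}_{J}\mathcal{T}:=H^{\nabla}_{J}EndE$.

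It remains to verify that this is a direct-sum complement, and I expect this to be the only point requiring care. The conceptual subtlety is that $H^{\nabla}_{J}EndE$ was defined as a complement inside the larger space $T_{J}EndE$, so one must check it does not ``shrink'' upon passing to $\mathcal{T}$: indeed, $\pi_{*}$ restricts to an isomorphism from $H^{\nabla}_{J}EndE$ onto $T_{\pi(J)}M$ (the defining property of the horizontal distribution), and since we have just shown $H^{\nabla}_{J}EndE\subseteq T_{J}\mathcal{T}$, the same map $(\pi_{\mathcal{T}})_{*}$ carries $H^{\nabla}_{J}\mathcal{T}$ isomorphically onto $T_{\pi(J)}M$. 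As $V_{J}\mathcal{T}=\ker\big((\pi_{\mathcal{T}})_{*}\colon T_{J}\mathcal{T}\to T_{\pi(J)}M\big)$ by construction of the vertical distribution, every $X\in T_{J}\mathcal{T}$ admits the unique decomposition $X=X_{V}+X_{H}$ with $X_{H}=\big((\pi_{\mathcal{T}})_{*}|_{H}\big)^{-1}\big((\pi_{\mathcal{T}})_{*}X\big)\in H^{\nabla}_{J}\mathcal{T}$ and $X_{V}=X-X_{H}\in V_{J}\mathcal{T}$. Existence and uniqueness of this decomposition is exactly the asserted direct sum $T_{J}\mathcal{T}=V_{J}\mathcal{T}\oplus H^{\nabla}_{J}\mathcal{T}$, and as a consistency check the fiber dimensions add correctly, $\dim T_{J}\mathcal{T}=\dim M+n(n-1)=\dim V_{J}\mathcal{T}+\dim H^{\nabla}_{J}\mathcal{T}$.
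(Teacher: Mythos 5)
Your proposal is correct and follows the same route as the paper: the paper's entire argument is the observation that, since $\nabla g=0$, the parallel translate of $J\in\mathcal{T}$ stays in $\mathcal{T}$, so $H^{\nabla}_{J}EndE\subseteq T_{J}\mathcal{T}$ and the splitting of $T_{J}EndE$ restricts. You have simply made explicit the isometry computation for $J(t)=P_{t}JP_{t}^{-1}$ and the complementarity argument via $(\pi_{\mathcal{T}})_{*}$, both of which the paper leaves implicit.
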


With the above splittings, it will be useful for later calculations to derive a certain formula for the vertical projection operator $P^{\nabla}: TEndE \longrightarrow VEndE \cong \pi^{*}{EndE}$, which, upon suitable restriction, will also be valid for the corresponding projection operators for $T\mathcal{C}$  and $T\mathcal{T}$. The formula will depend on the tautological section $\phi$ of $\pi^{*}{EndE}$ that is defined by $\phi|_{A}=A$:

\begin{prop}
\label{PropPF}
Let $X \in T_{A}EndE,$ then \[ P^{\nabla}(X)= (\pi^{*}\nabla)_{X}\phi,\] where we are considering $P^{\nabla}$ to be a section of $T^{*}{EndE} \otimes \pi^{*}EndE$. 
\label{propP}
\end{prop}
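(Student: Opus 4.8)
The plan is to verify the identity by decomposing $X$ according to the splitting $T_{A}EndE = V_{A}EndE \oplus H^{\nabla}_{A}EndE$ of Lemma \ref{LemCSPLIT}'s ambient version and treating the two summands separately, using $\mathbb{R}$-linearity of the pullback connection in its tangent argument. Writing $X = X^{V} + X^{H}$, the left-hand side is by definition $P^{\nabla}(X) = X^{V}$, regarded as an element of $\pi^{*}EndE|_{A} \cong EndE|_{\pi(A)}$ under the canonical identification $V_{A}EndE \cong EndE|_{\pi(A)}$. On the right-hand side, $(\pi^{*}\nabla)_{X}\phi = (\pi^{*}\nabla)_{X^{V}}\phi + (\pi^{*}\nabla)_{X^{H}}\phi$, so it suffices to establish the two relations $(\pi^{*}\nabla)_{X^{H}}\phi = 0$ and $(\pi^{*}\nabla)_{X^{V}}\phi = X^{V}$.

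The essential tool for both computations is that the pullback connection restricts along curves: if $c:\mathbb{R} \to EndE$ is a curve with $c(0)=A$ and $\gamma = \pi \circ c$, then the covariant derivative of $\phi$ in the direction of the velocity of $c$ at $0$ equals $(\gamma^{*}\nabla)_{\partial_{t}}(\phi \circ c)|_{t=0}$, where $\phi \circ c$ is simply the curve $t \mapsto c(t) \in EndE_{\gamma(t)}$, viewed as a section of $\gamma^{*}EndE$. This is the defining property of $\pi^{*}\nabla$ and reduces each case to an elementary computation.

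For the horizontal part, I would represent $X^{H}$ as the velocity at $t=0$ of $A(t)$, the parallel translate of $A$ along a curve $\gamma$ with $\gamma(0)=\pi(A)$; this is exactly how $H^{\nabla}_{A}EndE$ is defined. Then $\phi \circ c$ is the parallel section $A(t)$, whose covariant derivative vanishes by construction, giving $(\pi^{*}\nabla)_{X^{H}}\phi = 0$. For the vertical part, I would take $c(t) = A + tB$, where $B \in EndE_{\pi(A)}$ is the vector corresponding to $X^{V}$ under the identification above; since this curve stays in a single fiber, $\gamma = \pi \circ c$ is constant, the pulled-back connection is the ordinary derivative in the fixed vector space $EndE_{\pi(A)}$, and $(\pi^{*}\nabla)_{X^{V}}\phi$ is the derivative of $A+tB$ at $t=0$, namely $B = X^{V}$. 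Adding the two contributions yields $(\pi^{*}\nabla)_{X}\phi = X^{V} = P^{\nabla}(X)$.

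The only real subtlety I anticipate is the bookkeeping with the canonical identifications $V_{A}EndE \cong \pi^{*}EndE|_{A} \cong EndE|_{\pi(A)}$, together with making precise the claim that $\pi^{*}\nabla$ restricts to $\gamma^{*}\nabla$ along $c$. Once that characterization is in hand, both cases become immediate, reducing respectively to the defining property of parallel translation and to differentiating an affine curve in a fixed fiber. I would also note explicitly that $\nabla$ on $\pi^{*}EndE$ means the pullback of the connection that $\nabla$ induces on $EndE$, which is the same induced connection used to define $H^{\nabla}EndE$, so that the horizontal computation and the definition of $P^{\nabla}$ are compatible.
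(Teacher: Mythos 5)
Your proposal is correct and follows essentially the same route as the paper: both arguments split $X$ into its vertical and horizontal parts and evaluate $(\pi^{*}\nabla)_{X}\phi$ on each piece by differentiating along a curve in the fiber (vertical case) and along the parallel translate $A(t)$ of $A$ (horizontal case). The only difference is cosmetic --- the paper expands $\phi$ in a local frame $\{e_{i}\otimes e^{j}\}$ to carry out the covariant differentiation explicitly, whereas you invoke the naturality of the pullback connection restricted along curves, which amounts to the same computation.
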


\begin{proof}[Proof of Proposition $\ref{propP}$]
Let $\{e_{i}\}$ be a local frame for $E$ over some open set $U\subset M$ about the point $\pi(A)$, where $A \in EndE$, and let $\{e_{i} \otimes e^{j}\}$ be the corresponding frame for $EndE$. Then for $X \in T_{A}EndE$, 
\begin{align}
(\pi^{*}\nabla)_{X}\phi &=(\pi^{*}\nabla)_{X}\phi^{i}_{j}\pi^{*}(e_{i} \otimes e^{j}) \\& \label{EQ1}= d\phi^{i}_{j}(X)e_{i} \otimes e^{j}|_{\pi(A)} + A^{i}_{j} \nabla_{\pi_{*}X}e_{i} \otimes e^{j}.
\end{align}
Let us now consider the following two cases.

A) Let $X$ be an element of $V_{A}EndE$, which for the moment is not identified with $EndE|_{\pi(A)}$, so that $\pi_{*}X=0$. Also let A(t) be a curve in $EndE|_{\pi(A)}$ such that $A(0)=A$ and $\frac{dA(t)}{dt}|_{t=0}=X.$ Then by Equation \ref{EQ1}, $(\pi^{*}\nabla)_{X}\phi= \frac{dA(t)^{i}_{j}}{dt}|_{t=0}e_{i} \otimes e^{j}|_{\pi(A)}=P^{\nabla}(X) \in EndE|_{\pi(A)}$. 

B) Let $X \in H^{\nabla}_{A}EndE$ so that it equals $\frac{d}{dt}A(t)|_{t=0}$, where $A(t)$ is the parallel translate of $A$ along some curve $\gamma: \mathbb{R} \longrightarrow M$ that satisfies $\gamma(0)= \pi(A)$. As $d\phi^{i}_{j}(X) = \frac{d}{dt}A(t)^{i}_{j}|_{t=0},$ Equation \ref{EQ1} becomes $\frac{d}{dt}A(t)^{i}_{j}|_{t=0}e_{i} \otimes e^{j}|_{\pi(A)} + A^{i}_{j} \nabla_{\frac{d\gamma}{dt}|_{t=0}} e_{i} \otimes e^{j}$, which is zero since $A(t)$ is parallel.  
\end{proof}

If we consider the corresponding projection operator $P^{\nabla}: T\mathcal{C} \longrightarrow V\mathcal{C}$ then it follows from the above proposition that $P^{\nabla}(X) = (\pi_{\mathcal{C}}^{*}\nabla)_{X}\phi$, where $\phi$ is now a section of $\pi_{\mathcal{C}}^{*}EndE \longrightarrow \mathcal{C}$. Note that since $\phi^{2}=-1$, $(\pi_{\mathcal{C}}^{*}\nabla)_{X}\phi$, for $X \in T_{J}\mathcal{C}$, is indeed contained in $V_{J}\mathcal{C}=\{A \in EndE|_{\pi(J)}| \ \{A,J\}      
 = 0 \}$.  In the case when $g$ is a fiberwise metric on $E$ and $\nabla g=0$, an analogous formula holds for $P^{\nabla}: T\mathcal{T} \longrightarrow V\mathcal{T}$.  
 
 \begin{rmk}
We respectfully report that similar formulas for the projection operators for $T\mathcal{C}$ and $T\mathcal{T}$ were derived in \cite{Raw1} but with a small error.
\end{rmk}
\subsection{The Complex Structures}
\label{SecDCS}
Now let $E \longrightarrow (M,I)$ be an even rank real vector bundle that is fibered over an almost complex manifold and let $\nabla$ be a connection on $E$. We will define the following almost complex structure on the total space of $\pi: \mathcal{C}(E) \longrightarrow M$ and will explore its integrability conditions in the next section.

\begin{defi} 
\label{JI}
$\boldsymbol{\mathcal{J}^{(\nabla,I)}:}$
First use $\nabla$ to split 
\begin{equation*}
T\mathcal{C}=V\mathcal{C} \oplus H^{\nabla}\mathcal{C},
\end{equation*}
and then let
\begin{align*}
&(1) \ \mathcal{J}^{(\nabla,I)}A=JA \\
&(2) \ \mathcal{J}^{(\nabla,I)}v^{\nabla}=(Iv)^{\nabla}, 
\end{align*}       
where $A \in V_{J}\mathcal{C} \subset EndE|_{\pi(J)}$ and $v^{\nabla} \in H^{\nabla}_{J}\mathcal{C}$ is the horizontal lift of $v \in T_{\pi(J)}M.$

\end{defi}

In other words, $\mathcal{J}^{(\nabla,I)}$ on $V\mathcal{C} \oplus H^{\nabla}\mathcal{C}$ equals $\phi \oplus \pi^{*}I$, where we have identified  $V\mathcal{C}$ with $[\pi^{*}EndE,\phi]$ and $H^{\nabla}\mathcal{C}$ with $\pi^{*}TM$.  

It then follows from the definition of $\mathcal{J}^{(\nabla,I)}$ that $\pi$ is pseudoholomorphic:

\begin{prop} 
\label{PropHS}
$\pi: (\mathcal{C}, \mathcal{J}^{(\nabla,I)}) \longrightarrow (M,I)$ is a pseudoholomorphic submersion.
\end{prop}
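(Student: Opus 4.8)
The plan is to prove pseudoholomorphicity by verifying directly that $\pi_* \circ \mathcal{J}^{(\nabla,I)} = I \circ \pi_*$ as maps $T\mathcal{C} \longrightarrow TM$, which is the statement that $\pi$ intertwines the almost complex structures. Since both sides are $\mathbb{R}$-linear bundle maps, it suffices to check the identity separately on the two summands of the splitting $T_J\mathcal{C} = V_J\mathcal{C} \oplus H^\nabla_J\mathcal{C}$ furnished by Lemma \ref{LemCSPLIT}. That $\pi$ is a submersion is immediate from the fact that $\pi_*$ restricts to an isomorphism $H^\nabla_J\mathcal{C} \xrightarrow{\sim} T_{\pi(J)}M$, since the horizontal lift $v \mapsto v^\nabla$ is by construction a right inverse to $\pi_*$.

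First I would treat the vertical summand. For $A \in V_J\mathcal{C} \subset \operatorname{End}E|_{\pi(J)}$, the defining property of the vertical distribution is that $\pi_* A = 0$; by part (1) of Definition \ref{JI}, $\mathcal{J}^{(\nabla,I)}A = JA$ is again vertical (indeed $\{JA,J\}=0$ since $\{A,J\}=0$), so $\pi_*(\mathcal{J}^{(\nabla,I)}A) = 0$. Hence on this summand both $\pi_* \circ \mathcal{J}^{(\nabla,I)}$ and $I \circ \pi_*$ vanish, and the intertwining identity holds trivially.

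Next I would treat the horizontal summand. For the lift $v^\nabla \in H^\nabla_J\mathcal{C}$ of $v \in T_{\pi(J)}M$, the key facts are that $\pi_*(v^\nabla) = v$ (the horizontal lift projects back to its base vector) and, by part (2) of Definition \ref{JI}, that $\mathcal{J}^{(\nabla,I)}v^\nabla = (Iv)^\nabla$. Applying $\pi_*$ gives $\pi_*(\mathcal{J}^{(\nabla,I)}v^\nabla) = \pi_*\bigl((Iv)^\nabla\bigr) = Iv = I(\pi_* v^\nabla)$, so the identity again holds. Combining the two cases over the direct sum yields $\pi_* \circ \mathcal{J}^{(\nabla,I)} = I \circ \pi_*$ on all of $T_J\mathcal{C}$, completing the proof.

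I do not anticipate a genuine obstacle here: the proposition is essentially a restatement of the two clauses of Definition \ref{JI} together with the horizontal/vertical splitting, and it is flagged in the text as following immediately (\emph{``It then follows from the definition\ldots''}). The only point requiring a moment's care is confirming that the horizontal lift is well-defined and smooth so that $v \mapsto v^\nabla$ genuinely splits $\pi_*$; this is exactly the content of Lemma \ref{LemCSPLIT}, which I am entitled to assume.
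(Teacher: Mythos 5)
Your proof is correct and is exactly the argument the paper intends: the paper simply asserts that the proposition ``follows from the definition,'' and your case-by-case verification on the vertical and horizontal summands of $T_J\mathcal{C}=V_J\mathcal{C}\oplus H^{\nabla}_J\mathcal{C}$ spells out precisely why. No issues.
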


In the case when $g$ is a fiberwise metric on $E$ and $\nabla$ is a metric connection, the claim is that $\mathcal{J}^{(\nabla,I)}$ on $\mathcal{C}$ restricts to $\mathcal{T}$, so that $\mathcal{T} \subset (\mathcal{C},\mathcal{J}^{(\nabla,I)})$ is an almost complex submanifold. The reason is that $T_{J}\mathcal{T}$ splits into $V_{J}\mathcal{T} \oplus H_{J}^{\nabla}\mathcal{T}$, where $H_{J}^{\nabla}\mathcal{T}= H_{J}^{\nabla}\mathcal{C}=H_{J}^{\nabla}EndE,$ as explained in the previous section.

\begin{rmk}
It should be noted that $\mathcal{J}^{(\nabla,I)}$ has not yet been studied in this generality in the literature. In \cite{Vais1}, Vaisman  did study $\mathcal{J}^{(\nabla,I)}$ only in the special case when $E=TM$ and $\nabla I=0$ and only on certain submanifolds of $\mathcal{C}(TM)$. However, for our applications we do not want to restrict ourselves to $E=TM$ and we especially do not want to require $\nabla I=0$. 
\end{rmk}

With $\mathcal{J}^{(\nabla,I)}$ defined, let us now compare it to the tautological almost complex structures on twistor spaces that are usually considered in the literature \cite{Ahs1,Raw1,Ber1}. If $\nabla'$ is a connection on $TM \longrightarrow M$, where here $M$ is any even dimensional smooth manifold, then based on the splitting of $T\mathcal{C}$ into $V\mathcal{C} \oplus H^{\nabla'}\mathcal{C}$, we define $\mathcal{J}^{\nabla'}_{taut}$ on $\mathcal{C}(TM)$ as follows.

\begin{defi} 
\label{DefJTAUT}
Let $\mathcal{J}^{\nabla'}_{taut}= \phi \oplus \phi$, where we have identified  $V\mathcal{C}$ with $[\pi^{*}EndTM,\phi]$ and $H^{\nabla'}\mathcal{C}$ with $\pi^{*}TM$, and where the first $\phi$ factor acts by left multiplication.
\end{defi}

To compare it to $\mathcal{J}^{(\nabla,I)}$, note that $\mathcal{J}^{\nabla'}_{taut}$ does not require $M$ to admit an almost complex structure, while the former one does. On the other hand, $\mathcal{J}^{\nabla'}_{taut}$ is only defined for the bundle $E=TM$ whereas $\mathcal{J}^{(\nabla,I)}$ is defined for any even rank real vector bundle. Also, given $(M,I)$, the projection map $(\mathcal{C}(TM), \mathcal{J}^{\nabla'}_{taut}) \longrightarrow (M,I)$ is never pseudoholomorphic, whereas $(\mathcal{C}(E), \mathcal{J}^{(\nabla,I)}) \longrightarrow (M,I)$ is always so. Lastly, $\mathcal{J}^{\nabla'}_{taut}$ is rarely integrable---except in special cases such as when $M$ is an anti-selfdual four manifold (\cite{Ahs1}), as explained in the Introduction---whereas the integrability conditions of $\mathcal{J}^{(\nabla,I)}$ are very natural to be fulfilled, as we will show below.

Having compared the above almost complex structures, let us now return to the general setup of a vector bundle $E \longrightarrow (M,I)$ that is fibered over an almost complex manifold and that is equipped with a connection $\nabla$. The goal is to determine the conditions on $I$ and the curvature of $\nabla$, $R^{\nabla}$, that are equivalent to the integrability of $\mathcal{J}^{(\nabla,I)}$ not just on $\mathcal{C}$ but on other almost complex submanifolds $\mathcal{C}'$ as well. Although these conditions can be worked out for any $\mathcal{C}'$, we will focus on the case when the corresponding projection map $\pi_{\mathcal{C}'}: \mathcal{C}' \longrightarrow M$ is a surjective submersion. If $g$ is a fiberwise metric on $E$ and $\nabla g=0$ then as an example we can take  $\mathcal{C}'= \mathcal{T}(g)$. 

The method that we will use to explore the integrability conditions of $\mathcal{J}^{(\nabla,I)}$ on $\mathcal{C}'$ is to calculate its Nijenhuis tensor on $\mathcal{C}$.

\subsubsection{Nijenhuis Tensor} 

In this section, let $\pi:\mathcal{C}(E) \longrightarrow M$ be the projection map and define $\mathcal{J}:=\mathcal{J}^{(\nabla,I)}$ and $P:=P^{\nabla}: T\mathcal{C} \longrightarrow V\mathcal{C} \subset \pi^{*}EndE$, as in Section \ref{SecHD}. 
 We will presently compute the Nijenhuis tensor, $N^{\mathcal{J}}$, of $\mathcal{J}$ that is given by
\begin{equation*}
N^{\mathcal{J}}(X,Y)=[\mathcal{J}X,\mathcal{J}Y]-\mathcal{J}[\mathcal{J}X,Y]-\mathcal{J}[X,\mathcal{J}Y]-[X,Y],
\end{equation*}
in terms of the Nijenhuis tensor of $I$ and the curvature of $\nabla$, $R^{\nabla}.$ 

\begin{prop} 
\label{PropNT}
Let $X,Y \in T_{J}\mathcal{C}$ and let $v=\pi_{*}X$ and $w=\pi_{*}Y$. Then 
\begin{align*}
&1) \ \pi_{*}N^{\mathcal{J}}(X,Y) =N^{I}(v,w)\\
&2) \ PN^{\mathcal{J}}(X,Y)=[R^{\nabla}(v,w)- R^{\nabla}(Iv,Iw), J] +J[R^{\nabla}(Iv,w)+ R^{\nabla}(v,Iw),J].
\end{align*}
\end{prop}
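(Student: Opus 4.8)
The plan is to exploit the tensoriality of the Nijenhuis tensor: since $N^{\mathcal{J}}$ depends only on the pointwise values of its arguments, I decompose $X = X_V + v^{\nabla}$ and $Y = Y_V + w^{\nabla}$ according to the splitting $T\mathcal{C} = V\mathcal{C} \oplus H^{\nabla}\mathcal{C}$ of Lemma \ref{LemCSPLIT}, extend $X_V, Y_V$ to vertical vector fields and $v, w$ to local vector fields on $M$ (so that $v^{\nabla}, w^{\nabla}$ are genuine horizontal lifts), and evaluate $N^{\mathcal{J}}$ on the three types of pairs: vertical--vertical, vertical--horizontal, and horizontal--horizontal. Since $\mathcal{J}$ restricts on each fiber to left multiplication by $\phi$, which is the integrable complex structure $I_{\mathcal{C}}$ on the complex manifold $\mathcal{C}(E_x)$, the vertical--vertical contribution vanishes identically; this is also consistent with both right-hand sides, which vanish when $v = w = 0$.

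The heart of the computation is two bracket formulas, both derived from the projection formula $P(Z) = (\pi^{*}\nabla)_Z\phi$ of Proposition \ref{PropPF} together with the curvature identity for $\pi^{*}\nabla$ on $\pi^{*}EndE$. First I would observe that a horizontal lift $u^{\nabla}$ satisfies $(\pi^{*}\nabla)_{u^{\nabla}}\phi = P(u^{\nabla}) = 0$, and that the pullback curvature $R^{\pi^{*}\nabla}(Z_1,Z_2)$ is the pullback of the curvature of the induced connection on $EndE$ evaluated on $(\pi_* Z_1, \pi_* Z_2)$, and therefore annihilates any vertical argument. Feeding $\phi$ into the identity $R^{\pi^{*}\nabla}(Z_1,Z_2)\phi = (\pi^{*}\nabla)_{Z_1}(\pi^{*}\nabla)_{Z_2}\phi - (\pi^{*}\nabla)_{Z_2}(\pi^{*}\nabla)_{Z_1}\phi - (\pi^{*}\nabla)_{[Z_1,Z_2]}\phi$ then yields, for horizontal lifts,
\[
P[v^{\nabla}, w^{\nabla}] = -[R^{\nabla}(v,w), J],
\]
(recalling that the induced curvature on $EndE$ acts on $A$ by $[R^{\nabla}(v,w), A]$ and that $\phi|_J = J$), and, for a vertical field $\tilde A$ corresponding to the section $A$ of $\pi^{*}EndE$ and a horizontal lift $w^{\nabla}$,
\[
P[\tilde A, w^{\nabla}] = -(\pi^{*}\nabla)_{w^{\nabla}} A.
\]

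With these in hand, the horizontal--horizontal case produces the stated answer. Writing $\mathcal{J}v^{\nabla} = (Iv)^{\nabla}$ and using $P\mathcal{J}Z = \phi\,(PZ) = J\cdot PZ$ (valid because $\mathcal{J}$ preserves the splitting and acts by left multiplication on the vertical part), the four terms of $N^{\mathcal{J}}(v^{\nabla}, w^{\nabla})$ project under $P$ to $-[R^{\nabla}(Iv,Iw),J]$, $+J[R^{\nabla}(Iv,w),J]$, $+J[R^{\nabla}(v,Iw),J]$, and $+[R^{\nabla}(v,w),J]$, which regroup into formula (2). For formula (1) I would instead apply $\pi_*$, using that $\pi$ is a pseudoholomorphic submersion (Proposition \ref{PropHS}), so $\pi_*\mathcal{J} = I\pi_*$ and each horizontal lift is $\pi$-related to its projection; the four terms then collapse to $[Iv,Iw] - I[Iv,w] - I[v,Iw] - [v,w] = N^{I}(v,w)$.

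The step I expect to be most delicate, and the main obstacle, is the vertical--horizontal case, where I must show that $PN^{\mathcal{J}}$ vanishes (matching the right-hand side of (2), which is zero when $v = 0$). Here all four Lie brackets are vertical, so the difficulty is purely bookkeeping: applying the second bracket formula together with the Leibniz rule $(\pi^{*}\nabla)_{u^{\nabla}}(\phi A) = \phi\,(\pi^{*}\nabla)_{u^{\nabla}}A$ -- valid in the direction of a horizontal lift since $(\pi^{*}\nabla)_{u^{\nabla}}\phi = 0$ -- and the identity $\phi^{2} = -1$, the contributions $-\phi(\pi^{*}\nabla)_{(Iw)^{\nabla}}A$ and $-(\pi^{*}\nabla)_{w^{\nabla}}A$ cancel against their counterparts in pairs, leaving zero. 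Keeping the left-multiplication conventions and signs consistent throughout this cancellation, and correctly using the identification of $V\mathcal{C}$ with $[\pi^{*}EndE, \phi]$ via $P$, is where care is required.
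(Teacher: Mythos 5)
Your proof is correct and rests on the same two ingredients as the paper's: the projection formula $P(Z)=(\pi^{*}\nabla)_{Z}\phi$ of Proposition \ref{PropPF} and the curvature identity for $\pi^{*}\nabla$ applied to $\phi$. The only difference is organizational: your two bracket formulas are the specializations of the paper's single lemma $P([X,Y])=-[R^{\pi^{*}\nabla}(X,Y),\phi]+\pi^{*}\nabla_{X}P(Y)-\pi^{*}\nabla_{Y}P(X)$ to horizontal--horizontal and vertical--horizontal pairs, and your case-by-case cancellation (using $P\mathcal{J}=\phi P$ and $\phi^{2}=-1$) reproduces the paper's uniform cancellation of the non-curvature terms on the split pieces.
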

\begin{proof}[Proof of Proposition \ref{PropNT}, Part 1)] This easily follows from the fact that if $X \in \Gamma(T\mathcal{C})$ is $\pi \text{-related}$ to $v \in \Gamma(TM)$ then $\mathcal{J}X$ is $\pi \text{-related}$ to $Iv$. 
\end{proof}
Letting, as above, $\phi \in \Gamma(\pi^{*}EndE)$ be defined by $\phi|_{J}=J$, the proof of Part 2 of the proposition, will be based on the following lemma.
\begin{lemma} Let $X,Y \in \Gamma(T\mathcal{C})$. Then
\[P^{\nabla}([X,Y])= -[R^{\pi^{*}\nabla}(X,Y),\phi] + \pi^{*}\nabla_{X}P(Y)- \pi^{*}\nabla_{Y}P(X).\]
 
\end{lemma}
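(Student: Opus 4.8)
The plan is to reduce everything to the pointwise projection formula $P^{\nabla}(Z)=(\pi^{*}\nabla)_{Z}\phi$ established in Proposition \ref{PropPF}, and then to invoke nothing more than the definition of curvature. Since that formula exhibits $P^{\nabla}$ as a section of $T^{*}\mathcal{C}\otimes\pi^{*}EndE$, i.e. as covariant differentiation of the tautological section $\phi$, it applies verbatim to any tangent vector, including the value of the bracket field $[X,Y]$ at each point. Thus the first step is simply to write
\[
P^{\nabla}([X,Y])=(\pi^{*}\nabla)_{[X,Y]}\phi,
\]
so that the entire identity becomes a statement about the second covariant derivatives of $\phi$ with respect to the connection induced on $\pi^{*}EndE$.

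Next I would apply the Ricci identity (the defining relation for curvature) for the connection $\pi^{*}\nabla$ acting on the section $\phi$ of $\pi^{*}EndE$:
\[
(\pi^{*}\nabla)_{X}(\pi^{*}\nabla)_{Y}\phi-(\pi^{*}\nabla)_{Y}(\pi^{*}\nabla)_{X}\phi-(\pi^{*}\nabla)_{[X,Y]}\phi=R^{\pi^{*}\nabla,\,EndE}(X,Y)\phi,
\]
where $R^{\pi^{*}\nabla,\,EndE}$ denotes the curvature of the induced connection on $EndE$. Solving for the bracket term and using $(\pi^{*}\nabla)_{Y}\phi=P^{\nabla}(Y)$ and $(\pi^{*}\nabla)_{X}\phi=P^{\nabla}(X)$ to rewrite the iterated derivatives as $(\pi^{*}\nabla)_{X}P^{\nabla}(Y)$ and $(\pi^{*}\nabla)_{Y}P^{\nabla}(X)$, I obtain
\[
P^{\nabla}([X,Y])=(\pi^{*}\nabla)_{X}P^{\nabla}(Y)-(\pi^{*}\nabla)_{Y}P^{\nabla}(X)-R^{\pi^{*}\nabla,\,EndE}(X,Y)\phi.
\]

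The only substantive remaining point is to identify the curvature of the induced connection on $EndE$ with a commutator. For a connection on $E$ with curvature $R^{\pi^{*}\nabla}(X,Y)\in End(\pi^{*}E)$, the induced connection on $EndE$ has curvature acting on any endomorphism $\psi$ by $R^{\pi^{*}\nabla,\,EndE}(X,Y)\psi=[R^{\pi^{*}\nabla}(X,Y),\psi]$; this follows directly from the Leibniz rule $(\nabla_{Z}\psi)(s)=\nabla_{Z}(\psi s)-\psi(\nabla_{Z}s)$ defining the induced connection, by differentiating once more and antisymmetrizing. Specializing $\psi=\phi$ and substituting into the displayed identity yields exactly
\[
P^{\nabla}([X,Y])=-[R^{\pi^{*}\nabla}(X,Y),\phi]+(\pi^{*}\nabla)_{X}P^{\nabla}(Y)-(\pi^{*}\nabla)_{Y}P^{\nabla}(X),
\]
which is the claim. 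I expect the main obstacle to be bookkeeping rather than conceptual: one must keep straight the distinction between the connection on $E$, whose curvature appears in the commutator, and the one it induces on $EndE$, whose curvature governs the second derivatives of $\phi$, and one must fix the sign convention for curvature so that it is compatible with the convention for $N^{\mathcal{J}}$ in which this lemma is later used. It is also worth remarking that, since the projection formula of Proposition \ref{PropPF} is valid on all of $EndE$, the identity may first be proved on $\pi:EndE\to M$ and then restricted to $\mathcal{C}$, which avoids any need to extend $X,Y$ off the twistor space.
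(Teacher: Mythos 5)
Your proposal is correct and follows essentially the same route as the paper: both apply the projection formula $P^{\nabla}(Z)=(\pi^{*}\nabla)_{Z}\phi$ to the bracket, expand via the definition of the curvature of $\pi^{*}\nabla$ acting on $\phi\in\Gamma(\pi^{*}EndE)$, and conclude with the identity $R^{(\pi^{*}\nabla,\pi^{*}EndE)}(X,Y)\phi=[R^{\pi^{*}\nabla}(X,Y),\phi]$. The only difference is that you spell out the justification of this last identity via the Leibniz rule, which the paper states without proof.
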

\begin{proof}
Consider 
\begin{align*}
&P^{\nabla}([X,Y])= \pi^{*}\nabla_{[X,Y]}\phi \\ &= -R^{(\pi^{*}\nabla,\pi^{*}EndE)}(X,Y)\phi +\pi^{*}\nabla_{X}\pi^{*}\nabla_{Y}\phi -\pi^{*}\nabla_{Y}\pi^{*}\nabla_{X}\phi,
\end{align*}
where $R^{(\pi^{*}\nabla,\pi^{*}EndE)}$ is the curvature of $\pi^{*}\nabla$, which is considered as a connection on $\pi^{*}EndE$. The lemma then follows from the identity:  \\$R^{(\pi^{*}\nabla,\pi^{*}EndE)}(X,Y)\phi = [R^{\pi^{*}\nabla}(X,Y),\phi]$.

\end{proof}
\begin{proof}[Proof of Proposition \ref{PropNT}, Part 2)]
Let $X,Y \in \Gamma(T\mathcal{C})$ and consider \[PN^{\mathcal{J}}(X,Y)=P([\mathcal{J}X,\mathcal{J}Y]- \mathcal{J}[\mathcal{J}X,Y]-      
 \mathcal{J}[X,\mathcal{J}Y]-[X,Y]).\] By using the previous lemma as well as the fact that $P\mathcal{J}=\phi P$, we can express $PN^{\mathcal{J}}(X,Y)$ as the sum of two sets of terms.
 The first set involves the curvature of $\pi^{*}\nabla$:
\[[R^{\pi^{*}\nabla}(X,Y)- R^{\pi^{*}\nabla}(\mathcal{J}X,\mathcal{J}Y),\phi] +\phi[R^{\pi^{*}\nabla}(\mathcal{J}X,Y)+ R^{\pi^{*}\nabla}(X,\mathcal{J}Y),\phi].\] When restricted to $J\in \mathcal{C}$ this gives the expression for $PN^{\mathcal{J}}(X,Y)$ that is contained in Part 2 of the proposition.

The second set of terms is 
\[\pi^{*}\nabla_{\mathcal{J}X}P(\mathcal{J}Y) -\phi\pi^{*}\nabla_{\mathcal{J}X}P(Y) -\phi\pi^{*}\nabla_{X}P(\mathcal{J}Y) -\pi^{*}\nabla_{X}P(Y) -(X \leftrightarrow Y). \]
Using $P\mathcal{J}=\phi P$, it easily follows that the first four terms and the last four, which are represented by $(X \leftrightarrow Y)$,  separately add to zero.    
\end{proof}
\subsubsection{Integrability Conditions}

We are now prepared to explore the integrability conditions of $\mathcal{J}^{(\nabla,I)}$ on $\mathcal{C}',$ where, as above, $\mathcal{C}'$ is any almost complex submanifold of $(\mathcal{C}(E),\mathcal{J}^{(\nabla,I)})$ such that $\pi_{\mathcal{C}'}: \mathcal{C}' \longrightarrow M$ is a surjective submersion. As is well known, $\mathcal{J}^{(\nabla,I)}$ on $\mathcal{C}'$ will be integrable if and only if $\pi_{*}N^{\mathcal{J}}(X,Y)$ and $PN^{\mathcal{J}}(X,Y)$ are both zero $\forall X,Y \in T_{J}\mathcal{C}'$ and $\forall J \in \mathcal{C}'$. By Proposition \ref{PropNT}, the first condition is equivalent to the vanishing of the Nijenhuis tensor of $I$, while the second is equivalent to  \[[R^{\nabla}(v,w)- R^{\nabla}(Iv,Iw), J] +J[R^{\nabla}(Iv,w)+ R^{\nabla}(v,Iw),J]=0 \] $\forall v,w \in T_{\pi(J)}M$ and $\forall J \in \mathcal{C}'$. To analyze this condition, we will express it in terms of $R^{0,2}$, the (0,2)-form part of the curvature $R^{\nabla}$:
\begin{lemma}
The condition \[[R^{\nabla}(v,w)- R^{\nabla}(Iv,Iw), J] +J[R^{\nabla}(Iv,w)+ R^{\nabla}(v,Iw),J]=0 \]  $\forall v,w \in T_{\pi(J)}M$ holds true if and only if  
\[[R^{0,2},J]E^{0,1}_{J}=0. \]  
\end{lemma}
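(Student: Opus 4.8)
The plan is to fix the point $J\in\mathcal{C}'$ and argue purely algebraically on the complexified fibre, decomposing everything by type with respect to $I$ on $T_{\pi(J)}M$ and by the eigenspace splitting $E_{\mathbb{C}}=E^{1,0}_{J}\oplus E^{0,1}_{J}$ into the $\pm i$ eigenspaces of $J$. First I would complexify, split $R^{\nabla}=R^{2,0}+R^{1,1}+R^{0,2}$ with respect to $I$, and compute the two curvature combinations appearing in the hypothesis. Writing $v=v^{1,0}+v^{0,1}$, $w=w^{1,0}+w^{0,1}$ and using $Iv^{1,0}=iv^{1,0}$, $Iv^{0,1}=-iv^{0,1}$, a short bookkeeping gives
\[ R^{\nabla}(v,w)-R^{\nabla}(Iv,Iw)=2\bigl(R^{2,0}+R^{0,2}\bigr)(v,w), \qquad R^{\nabla}(Iv,w)+R^{\nabla}(v,Iw)=2i\bigl(R^{2,0}-R^{0,2}\bigr)(v,w). \]
The crucial point is that the $(1,1)$-part cancels in \emph{both} combinations, so only the $(2,0)$- and $(0,2)$-parts survive; this is what ultimately makes the condition depend on $R^{0,2}$ alone.

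Next I would substitute these into the expression from the lemma. Setting $a:=R^{2,0}(v,w)$ and $b:=R^{0,2}(v,w)$, the left-multiplication by $J$ in the second bracket lets me factor the whole expression as $2(1+iJ)[a,J]+2(1-iJ)[b,J]$. Recognizing $\tfrac12(1+iJ)=\Pi^{0,1}_{J}$ and $\tfrac12(1-iJ)=\Pi^{1,0}_{J}$ as the projections onto $E^{0,1}_{J}$ and $E^{1,0}_{J}$, and using that any commutator $[\,\cdot\,,J]$ anticommutes with $J$ and hence interchanges $E^{1,0}_{J}$ and $E^{0,1}_{J}$, I can rewrite the expression as $4[a,J]\Pi^{1,0}_{J}+4[b,J]\Pi^{0,1}_{J}$. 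The first summand has image in $E^{0,1}_{J}$ and the second has image in $E^{1,0}_{J}$.

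Since $E^{1,0}_{J}\cap E^{0,1}_{J}=0$, the two summands have independent images, so the total vanishes if and only if each vanishes separately, i.e. $[a,J]|_{E^{1,0}_{J}}=0$ and $[b,J]|_{E^{0,1}_{J}}=0$. The latter is precisely $[R^{0,2},J]E^{0,1}_{J}=0$, so it remains to show the former is equivalent to it. Here I would invoke reality: since $R^{\nabla}$ is a real $\mathrm{End}(E)$-valued form and $J$ is real, conjugation on $E_{\mathbb{C}}$ satisfies $\overline{R^{2,0}(v,w)}=R^{0,2}(v,w)$ and interchanges $E^{1,0}_{J}$ with $E^{0,1}_{J}$, whence $[a,J]=\overline{[b,J]}$ and $[a,J]|_{E^{1,0}_{J}}=0 \iff [b,J]|_{E^{0,1}_{J}}=0$. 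Thus both conditions collapse to $[R^{0,2},J]E^{0,1}_{J}=0$, completing the argument.

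I expect the main obstacle to be organizational rather than deep: keeping the type-decomposition conventions and the factors of $i$ consistent so that the $(1,1)$-cancellation and the identifications $\tfrac12(1+iJ)=\Pi^{0,1}_{J}$, $\tfrac12(1-iJ)=\Pi^{1,0}_{J}$ come out with the correct signs, and then making the reality/conjugation step rigorous enough to see that the surviving $(2,0)$-condition is genuinely equivalent to---rather than merely implied by---the $(0,2)$-condition.
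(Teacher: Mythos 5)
The paper states this lemma without proof, so there is no argument of the author's to compare against; your proposal supplies the missing details and is correct. The type bookkeeping checks out: $R^{1,1}$ cancels from both combinations, the identity $(1+iJ)[a,J]=[a,J](1-iJ)$ (a consequence of $[a,J]$ anticommuting with $J$) correctly splits the expression into a piece with image in $E^{0,1}_{J}$ and a piece with image in $E^{1,0}_{J}$, and the reality argument $\overline{R^{2,0}(v,w)}=R^{0,2}(v,w)$ together with conjugation swapping the eigenspaces of $J$ does show the two resulting conditions are equivalent rather than independent, which is exactly the point that needs care.
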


We thus have:
\begin{thm}
\label{ThmICS}
$(\mathcal{C}', \mathcal{J}^{(\nabla,I)})$ is a complex manifold if and only if 
\begin{align*}
& 1) I \text{ is integrable} \\ 
& 2) [R^{0,2},J]E^{0,1}_{J}=0, \ \  \forall J \in \mathcal{C}'.
\end{align*}
\end{thm}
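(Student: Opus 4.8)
The plan is to invoke the Newlander--Nirenberg theorem to reduce the assertion that $(\mathcal{C}',\mathcal{J}^{(\nabla,I)})$ is a complex manifold to the vanishing of the Nijenhuis tensor of $\mathcal{J}:=\mathcal{J}^{(\nabla,I)}$ restricted to $\mathcal{C}'$, and then to read off the two stated conditions from Proposition \ref{PropNT} and the preceding Lemma. First I would record the standard fact that, because $\mathcal{C}'$ is a $\mathcal{J}$-invariant submanifold of $\mathcal{C}$, the Nijenhuis tensor of the restricted structure agrees with the restriction of $N^{\mathcal{J}}$: for vector fields $X,Y$ tangent to $\mathcal{C}'$, the fields $X,Y,\mathcal{J}X,\mathcal{J}Y$ are all tangent to $\mathcal{C}'$, so their Lie brackets remain tangent and the ambient formula for $N^{\mathcal{J}}(X,Y)$ computes the intrinsic Nijenhuis tensor. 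Thus $(\mathcal{C}',\mathcal{J})$ is a complex manifold precisely when $N^{\mathcal{J}}(X,Y)=0$ for all $X,Y\in T_J\mathcal{C}'$ and all $J\in\mathcal{C}'$.

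The second step is to split this vanishing into its horizontal and vertical parts. Since $T_J\mathcal{C}=V_J\mathcal{C}\oplus H^{\nabla}_J\mathcal{C}$, a tangent vector $Z\in T_J\mathcal{C}$ is zero if and only if $\pi_*Z=0$ and $PZ=0$, where $\pi_*$ annihilates $V_J\mathcal{C}$ and restricts to an isomorphism on $H^{\nabla}_J\mathcal{C}$, while $P=P^{\nabla}$ projects onto $V_J\mathcal{C}$. Applying this to $Z=N^{\mathcal{J}}(X,Y)$ and invoking Proposition \ref{PropNT}, the horizontal part is $N^I(v,w)$ and the vertical part is $[R^{\nabla}(v,w)-R^{\nabla}(Iv,Iw),J]+J[R^{\nabla}(Iv,w)+R^{\nabla}(v,Iw),J]$, where $v=\pi_*X$ and $w=\pi_*Y$. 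Crucially, both expressions depend on $X,Y$ only through $v,w$ and the basepoint $J$.

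The key structural input is then the hypothesis that $\pi_{\mathcal{C}'}:\mathcal{C}'\to M$ is a surjective submersion. This guarantees that $\pi_*:T_J\mathcal{C}'\to T_{\pi(J)}M$ is onto, so that as $X,Y$ range over $T_J\mathcal{C}'$ the pair $(v,w)$ ranges over all of $T_{\pi(J)}M\times T_{\pi(J)}M$, and that $\pi(J)$ ranges over all of $M$ as $J$ ranges over $\mathcal{C}'$. Consequently, vanishing of the horizontal part for all admissible $X,Y,J$ is equivalent to $N^I\equiv 0$ on $M$, i.e. condition (1); and vanishing of the vertical part for all $v,w$ and all $J\in\mathcal{C}'$ is, by the preceding Lemma rewriting the curvature expression through $R^{0,2}$, exactly condition (2), namely $[R^{0,2},J]E^{0,1}_J=0$ for all $J\in\mathcal{C}'$. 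Combining these two equivalences with the reduction of the first paragraph yields the theorem in both directions.

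Since the analytic heavy lifting has already been carried out in Proposition \ref{PropNT} and the Lemma, the only genuine points requiring care are the two promotion arguments driven by surjectivity — ensuring we may quantify over all $v,w\in T_{\pi(J)}M$ rather than merely over the image of some chosen lifts, and over all of $M$ rather than a proper subset — together with the verification that the intrinsic and ambient Nijenhuis tensors coincide on the invariant submanifold $\mathcal{C}'$. I expect the surjectivity of $\pi_*$ on $\mathcal{C}'$ to be the one hypothesis that must be used explicitly, as it is precisely what makes condition (2), stated for every $J\in\mathcal{C}'$ and every $v,w$, both necessary and sufficient; without it one would only obtain the curvature condition tested against the lifts available inside $T_J\mathcal{C}'$.
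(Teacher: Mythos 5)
Your proposal is correct and follows essentially the same route as the paper: reduce integrability to the vanishing of $N^{\mathcal{J}}$ on $\mathcal{C}'$, split that vanishing into the horizontal part $\pi_{*}N^{\mathcal{J}}$ and the vertical part $PN^{\mathcal{J}}$, read these off from Proposition \ref{PropNT}, use the surjective submersion hypothesis to let $v,w$ range over all of $T_{\pi(J)}M$ and $\pi(J)$ over all of $M$, and convert the curvature condition via the preceding Lemma into $[R^{0,2},J]E^{0,1}_{J}=0$. Your explicit attention to the agreement of the intrinsic and ambient Nijenhuis tensors on the $\mathcal{J}$-invariant submanifold $\mathcal{C}'$ is a point the paper leaves implicit, but it is the same argument.
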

Note that the second condition in the above theorem is equivalent to $R^{0,2}:E^{0,1}_{J} \longrightarrow E^{0,1}_{J}, \forall J \in \mathcal{C}'.$

\subsection{(1,1) Curvature}

 Assuming henceforth that $I$ is integrable, an important case of Part 2 of the above theorem that guarantees that $(\mathcal{C}', \mathcal{J}^{(\nabla,I)})$ is a complex manifold is when $R^{(0,2)}=0$, or equivalently, when $R^{\nabla}$ is (1,1) with respect to $I$. In particular, we have:

\begin{thm}
\label{Thm1c}
Let $E \longrightarrow (M,I)$  be fibered over a complex manifold and let $\nabla$ be a connection on $E$ that has (1,1) curvature. Then $\mathcal{J}^{(\nabla,I)}$ is an integrable complex structure on $\mathcal{C}(E)$. In addition, if $g$ is a fiberwise metric on $E$ and $\nabla g=0$ then $\mathcal{T}(E,g)$ is a complex submanifold of $(\mathcal{C}(E),  \mathcal{J}^{(\nabla,I)})$.
\end{thm}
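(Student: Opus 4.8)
The plan is to read off both assertions as special cases of the integrability criterion already established in Theorem \ref{ThmICS}, taking $\mathcal{C}'$ to be $\mathcal{C}(E)$ for the first statement and $\mathcal{T}(E,g)$ for the second. Since the Nijenhuis computation of Proposition \ref{PropNT} and its reformulation in Theorem \ref{ThmICS} have already done the analytic work, what remains is to verify that the hypotheses of that theorem are met and to translate the (1,1) condition into the vanishing of the obstruction term $[R^{0,2},J]E^{0,1}_{J}$.

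First I would dispose of the first statement. The manifold $\mathcal{C}(E)$ is trivially an almost complex submanifold of itself, and $\pi_{\mathcal{C}}:\mathcal{C}(E)\longrightarrow M$ is a surjective submersion, so Theorem \ref{ThmICS} applies with $\mathcal{C}'=\mathcal{C}(E)$. Condition $1$ holds because $M$ is a complex manifold, hence $I$ is integrable and $N^{I}=0$. For condition $2$, I would observe that the hypothesis that $R^{\nabla}$ is (1,1) is precisely the statement $R^{0,2}=0$; consequently $[R^{0,2},J]E^{0,1}_{J}=[0,J]E^{0,1}_{J}=0$ for every $J\in\mathcal{C}(E)$, and in fact trivially so. Both conditions of Theorem \ref{ThmICS} being satisfied, we conclude that $(\mathcal{C}(E),\mathcal{J}^{(\nabla,I)})$ is a complex manifold.

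For the second statement I would take $\mathcal{C}'=\mathcal{T}(E,g)$ and run the identical argument, once I have confirmed that $\mathcal{T}(E,g)$ is genuinely an almost complex submanifold so that Theorem \ref{ThmICS} is applicable. This is where the hypothesis $\nabla g=0$ enters: by Lemma \ref{LemST} the bundle $T_{J}\mathcal{T}$ splits as $V_{J}\mathcal{T}\oplus H^{\nabla}_{J}\mathcal{T}$ with $H^{\nabla}_{J}\mathcal{T}=H^{\nabla}_{J}\mathcal{C}$, so $\mathcal{J}^{(\nabla,I)}$ visibly preserves the horizontal part; on the vertical part one checks that left multiplication by $J$ carries $V_{J}\mathcal{T}=[\mathfrak{o}(E,g),J]$ into itself, using that $J$ is $g$-skew and that $JA$ remains $g$-skew and anticommutes with $J$ whenever $A$ does. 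With $\mathcal{T}(E,g)$ thus an almost complex submanifold and $\pi_{\mathcal{T}}$ a surjective submersion, conditions $1$ and $2$ hold verbatim (again $R^{0,2}=0$), and Theorem \ref{ThmICS} yields that $(\mathcal{T}(E,g),\mathcal{J}^{(\nabla,I)})$ is complex; since its complex structure is the restriction of the one on $\mathcal{C}(E)$, it is a complex submanifold.

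I do not expect a serious obstacle, as the theorem is essentially a corollary of the machinery already assembled. The only point requiring a genuine, if routine, verification rather than a direct appeal to the hypothesis is that $\mathcal{T}(E,g)$ is an almost complex submanifold of $(\mathcal{C}(E),\mathcal{J}^{(\nabla,I)})$, i.e.\ that $\mathcal{J}^{(\nabla,I)}$ restricts to $T\mathcal{T}$. Alternatively, one could bypass even a second application of Theorem \ref{ThmICS} by invoking the standard fact that an almost complex submanifold of a complex manifold is automatically a complex submanifold, thereby reducing the second statement to the first together with the restriction property already asserted in the paragraph following Proposition \ref{PropHS}.
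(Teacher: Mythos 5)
Your proposal is correct and follows exactly the route the paper takes: Theorem \ref{Thm1c} is presented there as an immediate special case of Theorem \ref{ThmICS} (condition 2 holds vacuously when $R^{0,2}=0$, condition 1 holds since $I$ is integrable), with the restriction of $\mathcal{J}^{(\nabla,I)}$ to $\mathcal{T}(E,g)$ justified by the splitting of Lemma \ref{LemST} just as you describe. Your explicit check that left multiplication by $J$ preserves $[\mathfrak{o}(E,g),J]$ is a detail the paper leaves implicit, but otherwise the arguments coincide.
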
 

If we $\mathbb{C}$-linearly extend $\nabla$ to a complex connection on $E_{\mathbb{C}}:=E \otimes_{\mathbb{R}}\mathbb{C}$ then the condition that $R^{\nabla}$ is (1,1) can also be expressed as $(\nabla^{0,1})^{2}=0.$ We thus have:
\begin{lemma}
\label{LemDO}
Let $\nabla$ be a connection on $E \longrightarrow (M,I)$. Then $R^{\nabla}$ is (1,1) if and only if $\nabla^{0,1}$ is a $\overline{\partial}-$operator on $E_{\mathbb{C}}$.   
\end{lemma}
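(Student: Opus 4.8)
The plan is to make explicit the bidegree decomposition of the complexified connection and to identify the square of its $(0,1)$-part with the $(0,2)$-component of the curvature, then to close the argument using the reality of $\nabla$. First I would invoke the integrability of $I$ to obtain the honest splitting $T^{*}M \otimes \mathbb{C} = T^{*1,0} \oplus T^{*0,1}$ and write $\nabla = \nabla^{1,0} + \nabla^{0,1}$ on $E_{\mathbb{C}}$, where $\nabla^{0,1}: \Gamma(E_{\mathbb{C}}) \to \Gamma(T^{*0,1} \otimes E_{\mathbb{C}})$. Because $\nabla$ is a connection, $\nabla^{0,1}$ automatically satisfies the Leibniz rule $\nabla^{0,1}(fs) = (\overline{\partial}f)s + f\nabla^{0,1}s$, so the entire content of the phrase ``$\nabla^{0,1}$ is a $\overline{\partial}$-operator'' reduces, via Koszul--Malgrange, to the integrability condition $(\nabla^{0,1})^{2}=0$ on the natural extension of $\nabla^{0,1}$ to $E_{\mathbb{C}}$-valued $(0,q)$-forms.

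Next I would compute $(\nabla^{0,1})^{2}$ and match it against the curvature. Writing $\nabla = d + A$ in a local frame with $A = A^{1,0} + A^{0,1}$, one finds $(\nabla^{0,1})^{2} = \overline{\partial}^{2} + \overline{\partial}A^{0,1} + A^{0,1}\wedge A^{0,1}$; since $I$ is integrable we have $\overline{\partial}^{2}=0$, so $(\nabla^{0,1})^{2} = \overline{\partial}A^{0,1} + A^{0,1}\wedge A^{0,1}$. On the other hand, expanding $R^{\nabla} = dA + A\wedge A$ and sorting by bidegree through $d = \partial + \overline{\partial}$, the $(0,2)$-component is exactly $R^{0,2} = \overline{\partial}A^{0,1} + A^{0,1}\wedge A^{0,1}$, with the $(2,0)$ and mixed terms landing in other bidegrees. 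Hence $(\nabla^{0,1})^{2} = R^{0,2}$ as an $\mathrm{End}(E_{\mathbb{C}})$-valued $(0,2)$-form, and so $\nabla^{0,1}$ is a $\overline{\partial}$-operator if and only if $R^{0,2}=0$.

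Finally I would bridge ``$R^{0,2}=0$'' and ``$R^{\nabla}$ is (1,1)''. Since $E$ is real and $\nabla$ is the $\mathbb{C}$-linear extension of a real connection, $\nabla$ commutes with complex conjugation on $E_{\mathbb{C}}$; as conjugation interchanges the bidegrees $(2,0)$ and $(0,2)$, it sends $R^{2,0}$ to $R^{0,2}$, forcing these two components to vanish simultaneously. Recalling that the condition $R^{\nabla}(Iv,Iw) = R^{\nabla}(v,w)$ is precisely the vanishing of both the $(2,0)$ and the $(0,2)$ parts, I conclude that $R^{\nabla}$ is (1,1) if and only if $R^{0,2}=0$, if and only if $(\nabla^{0,1})^{2}=0$, if and only if $\nabla^{0,1}$ is a $\overline{\partial}$-operator.

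The step I expect to require the most care is the local identification $(\nabla^{0,1})^{2} = R^{0,2}$: one must track bidegrees faithfully through $d = \partial + \overline{\partial}$ and confirm that integrability of $I$ (that is, $\overline{\partial}^{2}=0$) is exactly what kills the would-be obstruction, with no $(1,1)$ or $(2,0)$ contributions leaking in. The reality argument in the last paragraph is conceptually essential, since without it $R^{0,2}=0$ would only be half of the (1,1) condition, but it is short once the conjugation symmetry is invoked.
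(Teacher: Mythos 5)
Your proof is correct and follows exactly the route the paper intends: the paper states Lemma \ref{LemDO} without proof, justifying it only by the remark that the (1,1) condition ``can also be expressed as $(\nabla^{0,1})^{2}=0$,'' and your argument supplies precisely the standard computation behind that remark (the local identification $(\nabla^{0,1})^{2}=R^{0,2}$, the use of the standing integrability assumption on $I$ to kill $\overline{\partial}^{2}$, and the reality/conjugation symmetry forcing $R^{2,0}$ and $R^{0,2}$ to vanish together). No gaps; your write-up is more detailed than the paper's.
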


In Section \ref{SecTHE} we will use the fact that $\nabla^{0,1}$ is a $\overline{\partial}-$operator to holomorphically embed $(\mathcal{C}, \mathcal{J}^{(\nabla,I)})$ into a more familiar complex manifold that is associated with the holomorphic bundle $E_{\mathbb{C}}$---the Grassmannian bundle $Gr_{n}(E_{\mathbb{C}})$. 

\begin{example}[Pseudoholomorphic Curves]
Let $E \longrightarrow (M,I)$ be an even rank real vector bundle fibered over a complex curve.  If $\nabla$ is any connection on $E$ then $R^{0,2}$ is automatically zero and hence $(\mathcal{C}(E),\mathcal{J}^{(\nabla,I)})$ is a complex manifold. Moreover if $g$ is a fiberwise metric on $E$ and $\nabla$ is a metric connection then $\mathcal{T}(E,g)$ is a complex submanifold of $(\mathcal{C}(E),\mathcal{J}^{(\nabla,I)})$. 

As an application, let $E \longrightarrow (N,J)$ be an even rank real vector bundle that is fibered over an almost complex manifold and let $\nabla$ be any connection on $E$. The goal is to show that although $(\mathcal{C}(E),\mathcal{J}^{(\nabla,J)})$ is only an almost complex manifold, it always contains many pseudoholomorphic submanifolds that are in fact complex manifolds. The idea is to use the well known existence of a plethora of pseudoholomorphic curves in $N$. Indeed, if we let $i: (S,I) \longrightarrow (N,J)$ be a pseudoholomorphic embedding of a complex curve into $N$ then the curvature of $i^{*}\nabla$ on $i^{*}E$  is (1,1) and thus $(\mathcal{C}(i^{*}E), \mathcal{J}^{(i^{*}\nabla,I)})$ is a complex manifold. As it is  straightforward to show that $i$ induces a pseudoholomorphic embedding of $\mathcal{C}(i^{*}E)$ into $\mathcal{C}(E)$, $\mathcal{C}(i^{*}E)$ is one of many examples of pseudoholomorphic submanifolds of $\mathcal{C}(E)$ that are themselves complex manifolds. 

Further connections between twistors and pseudoholomorphic curves will be explored in the near future. 
\qed 
\end{example}

In Section \ref{SecME}, we will use Theorem \ref{Thm1c} to construct complex structures on the twistor spaces of SKT, bihermitian and strong HKT manifolds.

\subsection{Other Curvature Conditions}
\label{SecOCC}
Although, by Theorem \ref{ThmICS}, the condition $R^{(0,2)}=0$ guarantees the integrability of $\mathcal{J}^{(\nabla,I)}$ on $\mathcal{C}' \subset \mathcal{C}(E)$, it is not the most general one. The present goal is to demonstrate some of these more general conditions for certain $\mathcal{C}'$.

As a first example, consider a $\mathcal{C}'$ that satisfies the following condition: given any $J \in \mathcal{C}'$, $-J$ is also in $\mathcal{C}'$. 

\begin{prop}
\label{PropIC'}
If $\mathcal{C}'$ satisfies the above condition then $(\mathcal{C}',\mathcal{J}^{(\nabla,I)})$ is a complex manifold if and only if $[R^{0,2},J]=0$ for all $J\in \mathcal{C}'$.
\end{prop}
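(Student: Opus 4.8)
The plan is to deduce this from the general integrability criterion of Theorem \ref{ThmICS} together with a single linear-algebra observation about the eigenspaces of $J$ and $-J$.

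First I would record that, since $I$ is assumed integrable, Theorem \ref{ThmICS} says $(\mathcal{C}',\mathcal{J}^{(\nabla,I)})$ is a complex manifold if and only if $[R^{0,2},J]E^{0,1}_{J}=0$ for every $J\in\mathcal{C}'$; and, as noted immediately after that theorem, this is equivalent to demanding that each endomorphism $R^{0,2}(v,w)$ of $E_{\mathbb{C}}$ preserve the $(-i)$-eigenspace $E^{0,1}_{J}$ of $J$, for all $v,w\in T_{\pi(J)}M$. So the proposition reduces to showing that, under the hypothesis on $\mathcal{C}'$, the condition ``$R^{0,2}$ preserves $E^{0,1}_{J}$ for all $J\in\mathcal{C}'$'' is equivalent to ``$[R^{0,2},J]=0$ for all $J\in\mathcal{C}'$''.

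The key observation is the eigenspace identity $E^{0,1}_{-J}=E^{1,0}_{J}$: the $(-i)$-eigenspace of $-J$ is exactly the $(+i)$-eigenspace of $J$. With this in hand, the forward direction is immediate: if $[R^{0,2},J]=0$ then $R^{0,2}$ commutes with $J$ and in particular preserves $E^{0,1}_{J}$, so the criterion of Theorem \ref{ThmICS} holds. For the converse, I would fix $J\in\mathcal{C}'$ and apply the integrability criterion both to $J$ and, using the standing hypothesis $-J\in\mathcal{C}'$, to $-J$: the first gives that $R^{0,2}$ preserves $E^{0,1}_{J}$, while the second gives that $R^{0,2}$ preserves $E^{0,1}_{-J}=E^{1,0}_{J}$. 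Since $E_{\mathbb{C}}=E^{1,0}_{J}\oplus E^{0,1}_{J}$ is the full eigenspace decomposition of $J$, and $R^{0,2}$ preserves each summand while $J$ acts as the scalar $\pm i$ on them, $R^{0,2}$ commutes with $J$, i.e. $[R^{0,2},J]=0$.

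I do not expect a serious obstacle here: the content is entirely the bookkeeping that ensures both halves of the eigenspace decomposition at a point are controlled. The only thing to be careful about is the order of quantifiers — the hypothesis $-J\in\mathcal{C}'$ must be used to produce the missing condition on $E^{1,0}_{J}$ at the same base point $\pi(J)=\pi(-J)$, so that both blocks of $R^{0,2}$ are pinned down simultaneously before concluding that it commutes with $J$.
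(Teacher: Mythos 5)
Your proposal is correct and follows essentially the same route as the paper: apply the criterion of Theorem \ref{ThmICS} to both $J$ and $-J$, use $E^{0,1}_{-J}=E^{1,0}_{J}$, and conclude that $[R^{0,2},J]=0$ once both eigenspaces of $J$ are controlled. The only cosmetic difference is that you phrase the criterion as ``$R^{0,2}$ preserves $E^{0,1}_{J}$'' while the paper phrases it as ``$[R^{0,2},J]$ annihilates $E^{0,1}_{J}$''; these are equivalent, as the paper itself notes.
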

\begin{proof}
If $(\mathcal{C}',\mathcal{J}^{(\nabla,I)})$ is a complex manifold then given $J\in \mathcal{C}'$, it follows from Theorem \ref{ThmICS}  that $[R^{0,2},J]E_{J}^{0,1}$  and $[R^{0,2},J]E_{-J}^{0,1}$ are both zero. Hence $[R^{0,2},J]=0$ for all $J\in \mathcal{C}'$. As $I$ is already assumed to be integrable, the converse also follows from Theorem \ref{ThmICS}. 
\end{proof}

In the case when $\mathcal{C}'=\mathcal{C}$, it is straightforward to show that the condition $[R^{0,2},J]=0$ for all $J \in \mathcal{C}$ is equivalent to the  endomorphism part of $R^{0,2}$ being pointwise constant. We thus have:

\begin{thm}$(\mathcal{C},\mathcal{J}^{(\nabla,I)})$ is a complex manifold if and only if $R^{(0,2)}= \lambda \otimes \mathbf{1}$, where $\lambda$ is a (0,2) form on $M$ and $\mathbf{1}$ is the identity endomorphism on $E_{\mathbb{C}}$.
\end{thm}

\begin{example}
To take a simple example, let $\nabla'$ be a connection on $E \longrightarrow (M,I)$ that has (1,1) curvature and let $\nabla=\nabla' +(w \otimes \mathbf{1})$ for some 1-form $w$. Then  $(R^{\nabla})^{0,2}=(\nabla^{0,1})^{2}$ on $E_{\mathbb{C}}$ equals $\overline{\partial}w^{0,1} \otimes \mathbf{1}$ and hence $\mathcal{J}^{(\nabla,I)}$ is a complex structure on $\mathcal{C}$. This complex structure, however, is not new since $\mathcal{J}^{(\nabla,I)}$ is actually equal to $\mathcal{J}^{(\nabla',I)}$. The reason is that although the connections $\nabla$ and  $\nabla'$ are not equal on $E$ they are in fact the same on $EndE$.

More interesting examples will be the subject of future work. 
\qed
\end{example}
  
  For another example of a $\mathcal{C'}$ of the above type, let $g$ be a fiberwise metric on $E \longrightarrow (M,I)$ and let $\nabla$ be a metric connection. As in the case for $\mathcal{C}$, it follows from Proposition $\ref{PropIC'}$ that $\mathcal{J}^{(\nabla,I)}$ is integrable on $\mathcal{C}' =\mathcal{T}(g)$ if and only if the endomorphism part of $R^{0,2}$ is pointwise constant. However, in this case $R^{0,2}$ is a (0,2) form that takes values in the skew endomorphism bundle $\mathfrak{o}(E_{\mathbb{C}},g)$, so that its trace is zero. We thus have:

 \begin{thm}
 \label{Prop1c}
 $(\mathcal{T},\mathcal{J}^{(\nabla,I)})$ is a complex manifold if and only if $R^{(0,2)}=0.$
 \end{thm}

\section{Examples}
\label{SecME}
The goal of the next few sections is to  describe various connections with (1,1) curvature on holomorphic Hermitian bundles and the resulting complex structures on the twistor spaces $\mathcal{C}$ and $\mathcal{T}$. In particular, we will demonstrate that the twistor spaces of SKT, bihermitian and strong HKT manifolds naturally admit complex structures. In Section \ref{secHSTRUC}, we will explore properties of Hermitian structures on these twistor spaces.

We will begin by considering the Chern connections of Hermitian bundles. 
\subsection{Chern Connections}
\label{SecCC}
 Let $E \longrightarrow (M,I)$ be a holomorphic vector bundle fibered over a complex manifold. Here, we will view it as a real bundle equipped with a fiberwise complex structure, $J$. If $g$ is any fiberwise metric on $E$ that is compatible with $J$ then, as is well known, the associated Chern connection $\nabla^{Ch}$ (considered as a real connection on $E$) has (1,1) curvature. We thus have
  \begin{cor}
 $(\mathcal{C}, \mathcal{J}^{(\nabla^{Ch},I)})$ is a complex manifold and $\mathcal{T}$ is a complex submanifold.
\end{cor}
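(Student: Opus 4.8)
The plan is to reduce the corollary to Theorem~\ref{Thm1c} by verifying its single geometric hypothesis: that the Chern connection, regarded as a real connection on $E$, has $(1,1)$ curvature with respect to the base complex structure $I$. Recall that on the holomorphic Hermitian bundle $(E,J,h)$ the Chern connection $\nabla^{Ch}$ is the unique connection compatible with $h$ whose base-$(0,1)$ part coincides with the Dolbeault operator $\overline{\partial}_{E}$ of the holomorphic structure. I would take this characterization as the starting point.

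First I would establish that $R^{\nabla^{Ch}}$ is $(1,1)$. The cleanest route is through Lemma~\ref{LemDO}: after extending $\nabla^{Ch}$ $\mathbb{C}$-linearly to $E_{\mathbb{C}}$ and using that $\nabla^{Ch}$ is $J$-linear, its base-$(0,1)$ part restricts on the $(\pm i)$-eigenbundles of $J$ to $\overline{\partial}_{E}$ and its conjugate; both square to zero by the integrability of the holomorphic structure, so $(\nabla^{Ch})^{0,1}$ is a $\overline{\partial}$-operator on $E_{\mathbb{C}}$. Lemma~\ref{LemDO} then gives $(R^{\nabla^{Ch}})^{0,2}=0$, and with the conjugate $(2,0)$-part vanishing by metric compatibility, one obtains $R^{\nabla^{Ch}}(I\cdot,I\cdot)=R^{\nabla^{Ch}}(\cdot,\cdot)$, which is exactly the hypothesis of Theorem~\ref{Thm1c}.

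With the curvature condition in hand the first assertion is immediate: Theorem~\ref{Thm1c} shows that $\mathcal{J}^{(\nabla^{Ch},I)}$ is an integrable complex structure on $\mathcal{C}(E)$. For the second assertion I would pass to the real metric $g=\operatorname{Re}h$, which is compatible with $J$ in the sense $g(J\cdot,J\cdot)=g(\cdot,\cdot)$. Since $\nabla^{Ch}$ preserves $h$ it preserves both $g$ and the associated fundamental form, so $\nabla^{Ch}g=0$; the second part of Theorem~\ref{Thm1c} then identifies $\mathcal{T}(E,g)$ as a complex submanifold of $(\mathcal{C}(E),\mathcal{J}^{(\nabla^{Ch},I)})$.

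The one point that genuinely requires care — and the step I would flag as the main obstacle — is the bookkeeping between the two complex structures in play: the fiberwise $J$ that makes $(E,J)$ a holomorphic bundle, and the complexification $E_{\mathbb{C}}=E\otimes_{\mathbb{R}}\mathbb{C}$ appearing in Lemma~\ref{LemDO}. The classical statement that Chern curvature is of type $(1,1)$ is usually phrased for $\operatorname{End}_{\mathbb{C}}(E,J)$-valued forms, whereas the hypothesis of Theorem~\ref{Thm1c} concerns the base form-part $R(I\cdot,I\cdot)=R(\cdot,\cdot)$ of the $\operatorname{End}_{\mathbb{R}}E$-valued curvature. These coincide because the base form-type is insensitive to whether the endomorphism values are regarded as complex- or real-linear; once this is noted the classical fact transfers verbatim and no further computation is needed.
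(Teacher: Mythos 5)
Your proposal is correct and follows essentially the same route as the paper: the paper simply cites the well-known fact that the Chern connection (viewed as a real connection) has $(1,1)$ curvature and then invokes Theorem~\ref{Thm1c}, exactly as you do. Your extra verification via Lemma~\ref{LemDO} matches the paper's own subsequent discussion, where $\nabla^{Ch(0,1)}$ on $E_{\mathbb{C}}=E^{1,0}\oplus E^{0,1}$ is identified as the direct sum of the Chern $\overline{\partial}$-operators on the two eigenbundles of $J$; the only nitpick is that the vanishing of the $(2,0)$-part follows from the reality of the curvature of a real connection rather than from metric compatibility.
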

\begin{example}
As a simple example, let $(M,I)$ be any complex manifold that admits a Kahler metric $g$. Then the Chern connection, $\nabla^{Ch}$, on $TM$ is the same as the Levi Civita connection, $\nabla^{Levi}$. Thus $\mathcal{J}^{(\nabla^{Levi},I)}$ is an integrable complex structure on $\mathcal{C}$ and $\mathcal{T}$.
\qed
\end{example}

 If we now $\mathbb{C}$-linearly extend $\nabla^{Ch}$ to $E_{\mathbb{C}}$ then, as a particular case of Lemma \ref{LemDO}, $\nabla^{Ch(0,1)}$ is a $\overline{\partial}$-operator for this bundle. To describe this $\bar{\partial}$-operator in more familiar terms, let us consider the holomorphic bundle $E^{1,0} \oplus E^{*1,0}$, where $E^{1,0}$ is the $+i$ eigenbundle of $J$.  The claim then is that the map
\[1 \oplus g: E_{\mathbb{C}}= E^{1,0} \oplus E^{0,1} \longrightarrow E^{1,0} \oplus E^{*1,0}\] 
is an isomorphism of holomorphic vector bundles. If we denote the Chern connection on $E^{1,0}$  by $\tilde{\nabla}^{Ch}$ then this follows from the following proposition, whose proof is straightforward.

\begin{prop} $\nabla^{Ch}= \tilde{\nabla}^{Ch} \oplus g^{-1}\tilde{\nabla}^{Ch}g$, as complex connections on $E_{\mathbb{C}}= E^{1,0} \oplus E^{0,1}$.
\end{prop}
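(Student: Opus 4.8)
The plan is to verify the claimed splitting by direct computation, working entirely within the $\mathbb{C}$-linear extension of $\nabla^{Ch}$ to $E_{\mathbb{C}} = E^{1,0} \oplus E^{0,1}$ and exploiting the defining properties of the Chern connection: that it is compatible with both the metric $g$ and the fiberwise complex structure $J$, i.e. $\nabla^{Ch} g = 0$ and $\nabla^{Ch} J = 0$. The key structural input I would use first is that $\nabla^{Ch} J = 0$ forces the extended connection to preserve the eigenbundle decomposition $E_{\mathbb{C}} = E^{1,0} \oplus E^{0,1}$ of $J$. Consequently $\nabla^{Ch}$ splits as a direct sum $\nabla_{1} \oplus \nabla_{0}$, where $\nabla_{1}$ is a connection on $E^{1,0}$ and $\nabla_{0}$ is a connection on $E^{0,1}$. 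The whole content of the proposition is then to identify $\nabla_{1}$ with $\tilde{\nabla}^{Ch}$, the Chern connection of the holomorphic bundle $E^{1,0}$, and to identify $\nabla_{0}$ with $g^{-1}\tilde{\nabla}^{Ch} g$ under the metric isomorphism.

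For the first identification, I would recall that $\tilde{\nabla}^{Ch}$ is characterized uniquely by two properties: its $(0,1)$-part equals the $\bar\partial$-operator of the holomorphic structure on $E^{1,0}$, and it is compatible with the induced Hermitian metric $h$ on $E^{1,0}$ (where $h$ comes from $g$ together with $J$). I would check that $\nabla_{1}$ satisfies both. The compatibility with the holomorphic structure follows because $R^{\nabla^{Ch}}$ is $(1,1)$, so that $\nabla^{Ch(0,1)}$ is a $\bar\partial$-operator (Lemma \ref{LemDO}), and restricting to $E^{1,0}$ recovers precisely the holomorphic structure that defines $E^{1,0}$ as the holomorphic bundle. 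The metric compatibility of $\nabla_{1}$ descends from $\nabla^{Ch} g = 0$ by restriction. By uniqueness of the Chern connection these two properties pin down $\nabla_{1} = \tilde{\nabla}^{Ch}$.

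For the second summand, the clean way to proceed is to use the conjugate-linear identification of $E^{0,1}$ with $E^{*1,0}$ furnished by $g$: the map $g: E^{0,1} \to E^{*1,0}$ sending $v \mapsto g(v, \cdot)$ is the relevant isomorphism. Under this identification I would transport $\nabla_{0}$ and show it becomes $g^{-1}\tilde{\nabla}^{Ch}g$; equivalently, I would verify the Leibniz-type identity relating $\nabla_{0}$ on $E^{0,1}$ to the dual connection of $\tilde{\nabla}^{Ch}$ on $E^{*1,0}$. The computation reduces to differentiating the metric pairing $g(\bar{s}, t)$ for sections $s, t$ of $E^{1,0}$ (with $\bar{s}$ its conjugate in $E^{0,1}$) and applying $\nabla^{Ch} g = 0$ together with the splitting $\nabla^{Ch} = \nabla_1 \oplus \nabla_0$; this is exactly the statement that $g$ intertwines $\nabla_0$ with $g^{-1}\tilde\nabla^{Ch} g$.

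The main obstacle, and the only place requiring genuine care rather than bookkeeping, is keeping the conjugate-linearity straight: $E^{0,1}$ is the complex conjugate of $E^{1,0}$, so the metric identification with $E^{*1,0}$ is conjugate-linear, and one must confirm that the induced operator on $E^{0,1}$ is genuinely $\mathbb{C}$-linear and agrees with the $\mathbb{C}$-linear extension of the real connection $\nabla^{Ch}$. Once the correct conjugations are tracked, the Leibniz identity closes immediately and the proposition follows; this is why the author rightly notes that the proof is straightforward.
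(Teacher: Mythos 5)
Your argument is correct and is exactly the ``straightforward'' proof the paper omits: $\nabla^{Ch}J=0$ gives the block splitting $\nabla_{1}\oplus\nabla_{0}$, uniqueness of the Chern connection identifies $\nabla_{1}=\tilde{\nabla}^{Ch}$, and differentiating the pairing $g(\bar{s},t)$ with $\nabla^{Ch}g=0$ identifies $\nabla_{0}=g^{-1}\tilde{\nabla}^{Ch}g$. One small correction to your closing remark: the map $g:E^{0,1}\rightarrow E^{*1,0}$, $v\mapsto g(v,\cdot)$, uses the $\mathbb{C}$-\emph{bilinearly} extended metric and is therefore $\mathbb{C}$-linear, not conjugate-linear (which is precisely why $1\oplus g$ can be an isomorphism of holomorphic bundles); the conjugate-linearity you worry about only enters if you precompose with $s\mapsto\bar{s}$, and your Leibniz computation already handles that correctly.
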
 
Thus in particular if $\{e_{i}\}$ is a local holomorphic trivialization of $E^{1,0}$ then $\{e_{i}, g^{-1}(e^{i})\}$ is a holomorphic trivialization of $E_{\mathbb{C}}$.

Now if $g'$ is another fiberwise metric on $E$ that is compatible with $J$ then in Section \ref{SecGRASS} we will address the question of whether $(\mathcal{T}(g'), \mathcal{J}^{(\nabla^{Ch'},I)})$ is biholomorphic to $(\mathcal{T}(g), \mathcal{J}^{(\nabla^{Ch},I)})$ by holomorphically embedding twistor spaces into  Grassmannian bundles.
\subsection{ $\bar{\partial}$-operators}
\label{SecDOP}
In the previous section, we found it useful to describe $\nabla^{Ch(0,1)}$ on $E_{\mathbb{C}}$ by considering the natural $\bar{\partial}$-operator $\bar{\partial}$ on $E^{1,0} \oplus E^{*1,0}$ and the isomorphism \[1 \oplus g: E_{\mathbb{C}}= E^{1,0} \oplus E^{0,1} \longrightarrow E^{1,0} \oplus E^{*1,0}.\] In this section, we will give more examples of $\bar{\partial}-$operators on $E^{1,0} \oplus E^{*1,0}$ and use this same isomorphism to transfer them to ones on $E_{\mathbb{C}}$. These in turn will give metric connections on $E$ with (1,1) curvature  that can be used to define complex structures on $\mathcal{T}$. 

To begin, let $(E,g,J)\longrightarrow (M,I)$ be, as above, a holomorphic Hermitian vector bundle and consider the following natural symmetric bilinear form $<,>$ on $E^{1,0} \oplus E^{*1,0}$: $<X+ \mu, Y+ \nu >= \frac{1}{2}(\mu(Y)+\nu(X))$. A general $\bar{\partial}$-operator that preserves this metric is of the form $\bar{\partial} + \mathcal{D}'^{0,1}$, where $\mathcal{D}'^{0,1} \in \Gamma(T^{*0,1} \otimes \mathfrak{so}(E^{1,0} \oplus E^{*1,0})).$ If we now consider the splitting of $\mathfrak{so}(E^{1,0} \oplus E^{*1,0})= EndE^{1,0} \oplus \wedge^{2}E^{*1,0} \oplus \wedge^{2}E^{1,0}$ then we may  decompose 
\[ \mathcal{D}'^{0,1}= \left(\begin{array}{cc} A &  \alpha \\ D & -A^{t}  \end{array}\right),\]
where $A, D$ and $\alpha$ are (0,1) forms with values in $ EndE^{1,0}, \wedge^{2}E^{*1,0}$ and $\wedge^{2}E^{1,0}$, respectively. 

Since $\bar{\partial} + \mathcal{D}'^{0,1}$ squares to zero, there are differential conditions on these sections. If we take, for example, the case when $\mathcal{D}'^{0,1}= D$ then these conditions are equivalent to $\overline{\partial}D=0$; a similar statement holds for the case when $\mathcal{D}'^{0,1}=\alpha.$ 

To obtain $\overline{\partial}$-operators on $E_{\mathbb{C}}$, consider, as above, the isomorphism,
\[1 \oplus g: (E_{\mathbb{C}}= E^{1,0} \oplus E^{0,1}, \frac{g}{2}) \longrightarrow (E^{1,0} \oplus E^{*1,0}, <,>).\]
$\overline{\partial} + \mathcal{D}'^{0,1}$ on $E^{1,0} \oplus E^{*1,0}$ then corresponds to $\nabla^{Ch(0,1)} + \mathcal{D}_{g}^{0,1}$ on $E_{\mathbb{C}}$, where 
\[\mathcal{D}_{g}^{0,1}= \left(\begin{array}{cc} A &  \alpha g \\ g^{-1}D & -g^{-1}A^{t}g  \end{array}\right).\] 

As we are interested in real connections on $E$, note that $\nabla^{Ch(0,1)} + \mathcal{D}_{g}^{0,1}$ is the (0,1) part of the real connection $\nabla^{Ch} + \mathcal{D}_{g}:=\nabla^{Ch} + \mathcal{D}_{g}^{0,1} + \overline{\mathcal{D}_{g}^{0,1}}$, whose curvature is (1,1).
\begin{cor}
$\mathcal{J}^{(\nabla^{Ch}+\mathcal{D}_{g}, I)}$ is a complex structure on $\mathcal{C}$ and $\mathcal{T}$.
\end{cor}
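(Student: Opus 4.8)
The plan is to deduce the corollary directly from Theorem~\ref{Thm1c} by verifying its hypotheses for the real connection $\nabla^{Ch}+\mathcal{D}_g$: namely, that its curvature is $(1,1)$, and, for the assertion about $\mathcal{T}$, that it is compatible with $g$. Since $(M,I)$ is a complex manifold, $I$ is integrable, so by Lemma~\ref{LemDO} the curvature condition amounts to showing that $(\nabla^{Ch}+\mathcal{D}_g)^{0,1}=\nabla^{Ch(0,1)}+\mathcal{D}_g^{0,1}$ is a $\overline{\partial}$-operator on $E_{\mathbb{C}}$, i.e. that it squares to zero. Here I use that $\mathcal{D}_g=\mathcal{D}_g^{0,1}+\overline{\mathcal{D}_g^{0,1}}$, so the $(0,1)$ part of $\mathcal{D}_g$ is exactly $\mathcal{D}_g^{0,1}$.

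The heart of the argument is the holomorphic isometry $1\oplus g:(E_{\mathbb{C}},g/2)\longrightarrow(E^{1,0}\oplus E^{*1,0},<,>)$ recorded in Section~\ref{SecCC}. By construction this map intertwines $\nabla^{Ch(0,1)}$ with the natural operator $\overline{\partial}$ on $E^{1,0}\oplus E^{*1,0}$, and, by the very definition of $\mathcal{D}_g^{0,1}$ as the transport of $\mathcal{D}'^{0,1}$, it conjugates the full operator $\overline{\partial}+\mathcal{D}'^{0,1}$ into $\nabla^{Ch(0,1)}+\mathcal{D}_g^{0,1}$. Conjugation by a fixed bundle isomorphism preserves the property of squaring to zero, and $\overline{\partial}+\mathcal{D}'^{0,1}$ squares to zero because it was chosen to be a $\overline{\partial}$-operator; hence $(\nabla^{Ch(0,1)}+\mathcal{D}_g^{0,1})^2=0$. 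By Lemma~\ref{LemDO}, $R^{\nabla^{Ch}+\mathcal{D}_g}$ is $(1,1)$, and Theorem~\ref{Thm1c} then gives that $\mathcal{J}^{(\nabla^{Ch}+\mathcal{D}_g,I)}$ is an integrable complex structure on $\mathcal{C}$.

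For the statement about $\mathcal{T}$, I would verify that $\nabla^{Ch}+\mathcal{D}_g$ is a metric connection. Since $\nabla^{Ch}g=0$ already, it suffices that $\mathcal{D}_g$ take values in $\mathfrak{o}(E,g)$. This is where the choice $\mathcal{D}'^{0,1}\in\Gamma(T^{*0,1}\otimes\mathfrak{so}(E^{1,0}\oplus E^{*1,0}))$ is used: because $1\oplus g$ is an isometry from $(E_{\mathbb{C}},g/2)$ to $(E^{1,0}\oplus E^{*1,0},<,>)$, skew-symmetry of $\mathcal{D}'^{0,1}$ with respect to $<,>$ transfers to skew-symmetry of $\mathcal{D}_g^{0,1}$ with respect to the complex-bilinear extension of $g$; since $g$ is real, $\overline{\mathcal{D}_g^{0,1}}$ is then $g$-skew as well, so $\mathcal{D}_g$ is real and $\mathfrak{o}(E,g)$-valued. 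Thus $\nabla^{Ch}+\mathcal{D}_g$ preserves $g$, and the second part of Theorem~\ref{Thm1c} (equivalently Proposition~\ref{Prop1c}) shows that $\mathcal{T}$ is a complex submanifold of $(\mathcal{C},\mathcal{J}^{(\nabla^{Ch}+\mathcal{D}_g,I)})$.

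The one point that deserves genuine care -- and which I expect to be the main, if mild, obstacle -- is checking that the explicit matrix expression for $\mathcal{D}_g^{0,1}$ displayed in the text really is the conjugate of $\mathcal{D}'^{0,1}$ by $1\oplus g$, so that the abstract principle ``conjugation preserves squaring to zero'' applies verbatim and introduces no lower-order correction terms. Once this identification is confirmed, both the $(1,1)$ curvature property and the metric compatibility follow formally, and the corollary is an immediate consequence of Theorem~\ref{Thm1c}.
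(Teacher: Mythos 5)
Your proposal is correct and follows essentially the same route as the paper: transfer the $\overline{\partial}$-operator $\overline{\partial}+\mathcal{D}'^{0,1}$ through the isometry $1\oplus g$ to get $\nabla^{Ch(0,1)}+\mathcal{D}_{g}^{0,1}$, conclude via Lemma \ref{LemDO} that the real connection $\nabla^{Ch}+\mathcal{D}_{g}$ has $(1,1)$ curvature, and apply Theorem \ref{Thm1c}. The paper leaves the metric-compatibility check and the verification that the displayed matrix for $\mathcal{D}_{g}^{0,1}$ is the conjugate of $\mathcal{D}'^{0,1}$ implicit; you correctly identify these as the only details needing care.
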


For convenience, we summarize the $\bar{\partial}$-operators and connections that we have discussed so far in the following table. 

\begin{center}
    \begin{tabular}{ | l | l | l | p{5cm} |}
    \hline
    $E$ & $E_{\mathbb{C}}$ & $E^{1,0} \oplus E^{*1,0}$ \\ \hline
     $\nabla^{Ch} + \mathcal{D}_{g}$ & $\nabla^{Ch(0,1)} + \mathcal{D}_{g}^{0,1}$ & $\bar{\partial} + \mathcal{D}'^{0,1}$    
   \\ \hline
    \end{tabular}
\end{center}

If we now take the case when $\mathcal{D}'^{0,1}=D$ then in Section \ref{SecCE} we will explore how $\mathcal{J}^{(\nabla^{Ch}+\mathcal{D}_{g}, I)}$ on $\mathcal{T}$ depends on the Dolbeault cohomology class of $D$ in $H^{0,1}(\wedge^{2}E^{*1,0})$, i.e.  if $B \in \Gamma(\wedge^{2}E^{*1,0})$ then we will determine whether $\overline{\partial} + D$ and $\overline{\partial} + D+ \overline{\partial}B$ give isomorphic complex structures on $\mathcal{T}$.  

Moreover we will also address a question that is a generalization of the one raised in the previous section: if $g'$ were another fiberwise metric on $E$ that is compatible with $J$ then given $\mathcal{D}'^{0,1} \in \Gamma(T^{*0,1} \otimes \mathfrak{so}(E^{1,0} \oplus E^{*1,0}))$, is it true that $(\mathcal{T}(g'),\mathcal{J}^{(\nabla^{Ch'}+\mathcal{D}_{g'}, I)})$ is biholomorphic to $(\mathcal{T}(g),\mathcal{J}^{(\nabla^{Ch}+\mathcal{D}_{g}, I)})$?
\subsection{Three Forms}
\label{SecTF}
An important case of the above discussion is when $E=TM$ is fibered over a Hermitian manifold $(M,g,I)$ that is equipped with a real three form $H= \overline{H^{2,1}} +H^{2,1}$ of type (1,2) + (2,1), such that $\overline{\partial}H^{2,1}=0$. In this case, we will let $\mathcal{D}'^{0,1}=H^{2,1}$, which is defined to be a section of $T^{*0,1} \otimes \mathfrak{so}(T^{1,0} \oplus T^{*1,0})$ by setting $H^{2,1}_{v}w=H^{2,1}(v,w,\cdot)$, for $v \in T^{0,1}$ and $w \in T^{1,0}$. It then follows that $\nabla^{Ch(0,1)} + g^{-1}H^{2,1}$, where here $g^{-1}H^{2,1}=g^{-1}H^{2,1}_{\frac{1}{2}(1+iI)}$, is a $\overline{\partial}$-operator on $TM_{\mathbb{C}}=T^{1,0} \oplus T^{0,1}$. As the corresponding $\mathcal{D}_{g}$ in the above table is $\frac{1}{2}I[g^{-1}H,I]$, we have  
\begin{prop}
\label{PropHCON}
 $\nabla^{Ch}+ \frac{1}{2}I[g^{-1}H,I]$ is a metric connection on $TM$ with (1,1) curvature. 
\end{prop}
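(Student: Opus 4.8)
The plan is to recognize Proposition \ref{PropHCON} as the instance of the general construction of Section \ref{SecDOP} in which $E = TM$ and $\mathcal{D}'^{0,1}$ is taken to be pure lower-triangular, i.e. $A = \alpha = 0$ and $D = H^{2,1}$. Accepting that framework, three things must be checked: (i) that $H^{2,1}$, with the convention $H^{2,1}_v w = H^{2,1}(v,w,\cdot)$, really is a section of $T^{*0,1} \otimes \wedge^2 T^{*1,0}$ sitting in the $D$-slot of $\mathfrak{so}(T^{1,0} \oplus T^{*1,0})$; (ii) that the hypothesis $\overline{\partial}H^{2,1} = 0$ is exactly the integrability condition that makes $\overline{\partial} + H^{2,1}$ a $\overline{\partial}$-operator; and (iii) that the real connection correction $\mathcal{D}_g = \mathcal{D}_g^{0,1} + \overline{\mathcal{D}_g^{0,1}}$ produced by transporting $H^{2,1}$ through the isomorphism $1 \oplus g$ coincides with the explicit real expression $\frac{1}{2}I[g^{-1}H, I]$.

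First I would dispatch (i) and (ii). For (i), since $H^{2,1}$ is a $3$-form of type $(2,1)$, total antisymmetry forces the slot $v$ to carry the single $(0,1)$ index while the two remaining $(1,0)$ slots make $H^{2,1}_v \colon T^{1,0} \to T^{*1,0}$ antisymmetric, hence $H^{2,1}_v \in \wedge^2 T^{*1,0}$; thus $H^{2,1} \in \Gamma(T^{*0,1} \otimes \wedge^2 T^{*1,0})$, which is precisely the $D$-block. For (ii), in the pure-$D$ case the square of $\overline{\partial} + \mathcal{D}'^{0,1}$ has vanishing algebraic term $(\mathcal{D}'^{0,1})^2 = 0$ because the block is strictly lower triangular, so the only obstruction is the differential term, which reduces to $\overline{\partial}H^{2,1}$; this is zero by hypothesis. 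Hence $\overline{\partial} + H^{2,1}$ is a $\overline{\partial}$-operator on $T^{1,0} \oplus T^{*1,0}$, and under $1 \oplus g$ it corresponds to $\nabla^{Ch(0,1)} + g^{-1}H^{2,1}$ with $g^{-1}H^{2,1}$ the lower-left block, as recorded in the table. By Lemma \ref{LemDO} the real connection whose $(0,1)$-part is this $\overline{\partial}$-operator then has $(1,1)$ curvature, and it is metric since the whole construction lives inside $\mathfrak{so}(T^{1,0} \oplus T^{*1,0})$ and is transported by the isometry $1 \oplus g$.

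The heart of the matter is (iii): identifying $\mathcal{D}_g$ with $\frac{1}{2}I[g^{-1}H,I]$. Here the clean observation is that, writing any $\Phi \in \mathrm{End}(TM_{\mathbb{C}})$ in blocks with respect to $TM_{\mathbb{C}} = T^{1,0} \oplus T^{0,1}$ and using $I = i\,\mathrm{pr}_{1,0} - i\,\mathrm{pr}_{0,1}$, a short computation gives $[\Phi, I] = \Phi I - I\Phi$ purely off-diagonal with entries $\mp 2i\,\Phi_{\pm\mp}$, so that $\frac{1}{2}I[\Phi,I]$ equals exactly the off-diagonal part of $\Phi$ (the factor $\frac{1}{2}$ cancelling the $2$'s produced by the two $i$'s). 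Applying this to $\Phi = g^{-1}H_u$ for a real vector $u$, I would then check that the lower-left block of $g^{-1}H_u$ is $g^{-1}\!\left(H(u,\cdot,\cdot)|_{T^{1,0} \times T^{1,0}}\right) = g^{-1}H^{2,1}_{u^{0,1}}$, since forcing two $(1,0)$ input/output slots selects the $(2,1)$-part of $H$ and the $(0,1)$-part $u^{0,1} = \tfrac{1}{2}(1+iI)u$ of $u$; by reality this block's conjugate is the upper-right block $g^{-1}H^{1,2}_{u^{1,0}}$. These are precisely $\mathcal{D}_g^{0,1}$ and $\overline{\mathcal{D}_g^{0,1}}$, so $\frac{1}{2}I[g^{-1}H,I] = \mathcal{D}_g$.

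I expect the main obstacle to be the bookkeeping in step (iii): keeping the projection $\frac{1}{2}(1+iI)$, the factor $\frac{1}{2}$ in front of $I[\cdot,I]$, and the two applications of $g^{-1}$ and $g$ (which convert the $\wedge^2 T^{*1,0}$ and $\wedge^2 T^{1,0}$ pieces into off-diagonal endomorphisms) all mutually consistent, and in verifying that raising and lowering with the complex-bilinear extension of $g$ genuinely pairs $T^{1,0}$ with $T^{0,1}$, so that $g^{-1}H^{2,1}_v$ really lands in $T^{0,1}$ and occupies the lower-left block. Once the off-diagonal-extraction identity is in hand, however, the matching of the $(2,1)$ and $(1,2)$ parts of $H$ with the two off-diagonal blocks is forced by type considerations, and the proposition follows.
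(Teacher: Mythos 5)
Your proposal is correct and follows the same route as the paper: the paper presents Proposition \ref{PropHCON} as the special case of the Section \ref{SecDOP} construction with $\mathcal{D}'^{0,1}=D=H^{2,1}$, asserting without further detail that the corresponding $\mathcal{D}_g$ equals $\frac{1}{2}I[g^{-1}H,I]$, which is exactly the identification you verify in step (iii). Your block computation (off-diagonal extraction via $\frac{1}{2}I[\,\cdot\,,I]$ and the type bookkeeping matching $g^{-1}H^{2,1}_{u^{0,1}}$ with the lower-left block) correctly fills in the detail the paper leaves implicit.
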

 Hence $ \mathcal{J}^{(\nabla^{Ch}+ \frac{1}{2}I[g^{-1}H,I],I)}$ is a complex structure on $\mathcal{C}$ and $\mathcal{T}$. 

As we will now show, natural examples of the above three form $H$ can be found on SKT manifolds, bihermitian manifolds and strong HKT manifolds.  

\subsubsection{SKT Manifolds} 
\label{SecSKT}
A natural example of a real three form on any Hermitian manifold, $(M,g,I)$, is $H=-d^{c}w=i(\partial -\overline{\partial})w$, where $w(\cdot,\cdot)=g(I\cdot,\cdot)$. If we take its (2,1) part, $H^{2,1}$, then it is straightforward to check that it is $\overline{\partial}$ closed if and only if $dH=0$. Manifolds whose $H$ satisfy this condition are known in the literature as strong Kahler with torsion (SKT) manifolds \cite{Skt1,Skt2}. One of the associated $\overline{\partial}$-operators on $TM_{\mathbb{C}}= T^{1,0} \oplus T^{0,1}$ is $\nabla^{Ch (0,1)} -g^{-1}H^{2,1}$. As a corollary of the above discussion, this $\overline{\partial}$-operator leads to complex structures on the twistor spaces $\mathcal{C}$ and $\mathcal{T}$ that can be described as follows. First note that $\nabla^{Ch (0,1)} -g^{-1}H^{2,1}$ is the (0,1) part of the real connection $\nabla^{Ch}- \frac{1}{2}I[g^{-1}H,I]$ which can be shown to be equal to $\nabla^{-}:= \nabla^{Levi} -\frac{1}{2}g^{-1}H$, where $\nabla^{Levi}$ is the Levi Civita connection. The connection $\nabla^{-}$ is closely related to the Bismut connection, $\nabla^{+}:= \nabla^{Levi} +\frac{1}{2}g^{-1}H$ (see below for  a general definition as well as \cite{Bismut1,Guad1}). 

\begin{thm}
\label{CorCSSKT}
If $(M,g,I)$ is SKT then $(\mathcal{C}, \mathcal{J}^{(\nabla^{-},I)})$ is a complex manifold and $\mathcal{T}$ is a complex submanifold.
  \end{thm}

The Bismut connection that was mentioned above is actually defined for any almost Hermitian manifold:
\begin{defi}
\label{DefB} Let $(M,g,I)$ be an almost Hermitian manifold.
 The Bismut connection is the unique connection, $\nabla^{+}$, on $TM$ that satisfies 
\begin{align*}
& 1) \ \nabla^{+}=\nabla^{Levi} + \frac{1}{2}g^{-1}H, \text{ where $H$ is a 3-form} \\
& 2) \ \nabla^{+}I=0.
\end{align*}
\end{defi}

It can be shown that $H$ is (1,2) +(2,1) if and only if $I$ is integrable and in this case it equals $-d^{c}w$ \cite{Guad1,Gualt1}.  
\subsubsection{Bihermitian Manifolds}
\label{SecBIH}
A source of SKT manifolds is bihermitian manifolds. They were first introduced by physicists in \cite{Rocek1}, motivated by studying certain supersymmetric sigma models, and were later found to be equivalent to (twisted) generalized Kahler manifolds \cite{Gualt1,Hitchin1} (see also \cite{Apost1}). A bihermitian manifold is by definition a Riemannian manifold $(M,g)$ that is equipped with two metric compatible complex structures $J_{+}$ and $J_{-}$ that satisfy the following conditions
\[\nabla^{+}J_{+}=0 \ \ \text{ and } \ \  \nabla^{-}J_{-}=0, \]
where $\nabla^{\pm}= \nabla^{Levi} \pm \frac{1}{2}g^{-1}H$, for a closed three form $H$.

It then follows from Definition \ref{DefB} that $\nabla^{+}$ and $\nabla^{-}$ are the respective Bismut connections for $J_{+}$ and $J_{-}$. Thus an equivalent way to express the above bihermitian conditions is 
\[H=-d^{c}_{+}w_{+}=d^{c}_{-}w_{-} \ \ \text{ and } \ \ dH=0.\]
Since $dH$ is assumed to be zero, $(g,J_{+})$ and $(g,J_{-})$ are two SKT structures for $M$ and hence by Theorem \ref{CorCSSKT} the associated twistor space $\mathcal{T}$ admits the following two complex structures that depend on the three form $H$: 
\begin{thm} \label{thmCOMBIHTWO}
$\mathcal{J}^{(\nabla^{-},J_{+})}$ and $ \mathcal{J}^{(\nabla^{+},J_{-})}$ are two complex structures on $\mathcal{C}$ and $\mathcal{T}$.
\end{thm}

\subsubsection{Strong HKT Manifolds}
Another source of SKT manifolds is strong hyperkahler with torsion (strong HKT) manifolds \cite{Grant1}. Let $(M,g,I,J,K)$ be a strong HKT manifold so that $I,J$ and $K$ are metric compatible complex structures that satisfy 
\begin{itemize}
\item $\{ I,J\}=0$ and $K=IJ$ 
\item $\nabla^{+}I=0, \nabla^{+}J=0$ and $\nabla^{+}K=0,$ where $\nabla^{+}= \nabla^{L} + \frac{1}{2}g^{-1}H$, $\nabla^{L}$ is the Levi Civita connection and $H$ is a closed three form.
\end{itemize} 
Setting $\nabla^{-}= \nabla^{L} - \frac{1}{2}g^{-1}H$ and using Theorem \ref{CorCSSKT}, we have 
\begin{thm}  \label{thmHYPINT1} 
$\mathcal{J}^{(\nabla^{-},I)}, \mathcal{J}^{(\nabla^{-},J)}$ and $\mathcal{J}^{(\nabla^{-},K)}$ are three integrable complex structures on $\mathcal{T}$. 
\end{thm}

\begin{rmk}
The above complex structures on the twistor space of a strong HKT manifold are quite different from the complex structure $\mathcal{J}_{taut}$ known in the literature \cite{Grant1}. First, $\mathcal{J}_{taut}$ is integrable generally on $S:= \{aI+bJ+cK | \ a^{2}+b^{2}+c^{2}=1\} \subset \mathcal{T}$.
 Second, it is defined by using the connection $\nabla^{+}$ to split $TS= VS \oplus H^{\nabla^{+}}S$ and then setting $\mathcal{J}_{taut}= \phi \oplus \phi$. Moreover, it is integrable on $S$ without assuming $dH=0$. This is to be compared with the complex structures of Theorem  \ref{thmHYPINT1}, which are integrable on all of $\mathcal{T}$ and are defined by using the $\nabla^{-}$ connection to split $T\mathcal{T}$.  
\end{rmk}

In Section \ref{secHERSKT1}, we construct a metric compatible with $\mathcal{J}^{(\nabla^{-},I)}, \mathcal{J}^{(\nabla^{-},J)}$ and $\mathcal{J}^{(\nabla^{-},K)}$ so that the three associated Hermitian structures have equal torsions.

\section{Hermitian Structures on $\mathcal{T}$ and their Torsion} \label{secHSTRUC}
Let $(M,g,I)$ be an almost Hermitian manifold and let $\nabla$ be a metric connection on $TM$. We will first build a metric on the total space of $\pi: \mathcal{T}(TM,g) \rightarrow M$ that will be compatible with $\mathcal{J}^{(\nabla,I)}$. To do so, consider the splitting $T\mathcal{T}=V\mathcal{T} \oplus H^{\nabla}\mathcal{T}$ into vertical and horizontal distributions. As in Section \ref{SecDCS}, we will identify $H^{\nabla}\mathcal{T}$ with $\pi^{*}TM$ and $V\mathcal{T}$ with $[\pi^{*}\mathfrak{o}(TM,g), \phi]$, where $\phi \in \Gamma (\pi^{*}End(TM))$ is defined by $\phi|_{K}=K$.

\begin{defi}
Using the splitting $T\mathcal{T}=V\mathcal{T} \oplus H^{\nabla}\mathcal{T}$, define
\[g^{\nabla}= -tr \oplus \pi^{*}g,\]
where $-tr(A,B)= -tr(AB)$ for $A,B \in V_{K}\mathcal{T}$.
\end{defi}

Using Theorem \ref{Thm1c}, we have 
\begin{prop}
$(g^{\nabla}, \mathcal{J}^{(\nabla,I)})$ is an almost Hermitian structure on $\mathcal{T}$. Moreover, it is Hermitian if $I$ is integrable and $R^{\nabla}$ is (1,1) with respect to $I$.
\end{prop}

Given $(g^{\nabla}, \mathcal{J}^{(\nabla,I)})$, we then define the associated fundamental two form
\[w^{(\nabla,I)}(\cdot,\cdot)= g^{\nabla}(\mathcal{J}^{(\nabla,I)}\cdot,\cdot).\] 
The following gives an expression for the corresponding torsion three form $d^{c}w^{(\nabla,I)}(\cdot,\cdot,\cdot):= -dw^{(\nabla,I)}(\mathcal{J}^{(\nabla,I)}\cdot,\mathcal{J}^{(\nabla,I)}\cdot,\mathcal{J}^{(\nabla,I)}\cdot)$ in terms of the curvature and torsion of $\nabla$, $R^{\nabla}$ and $T^{\nabla}$, and the vertical projection operator $P^{\nabla}$ defined by the splitting of $T\mathcal{T}=V\mathcal{T} \oplus H^{\nabla}\mathcal{T}$.
(As above, $\phi \in \Gamma (\pi^{*}End(TM))$ is defined by $\phi|_{K}=K$.)
\begin{thm} \label{thmDCW}
Let $(M,g,I)$ be an almost Hermitian manifold and let $\nabla$ be a metric connection on $TM$. Given $X_{i}\in T_{K}\mathcal{T}$ with $\pi_{*}X_{i}=v_{i}$ and  $i\in \{1,2,3\}$, 
\begin{align*}
d^{c}w^{(\nabla,I)}(X_{1},X_{2},X_{3})= \ &   
                                                   tr([R^{\nabla}(Iv_{1},Iv_{2}), \phi] P^{\nabla}(X_{3})) \\ &  + g((\nabla_{Iv_{1}} I)v_{2},v_{3}) - g(T^{\nabla}(Iv_{1},Iv_{2}),v_{3}) \\ &  + cyclic(1,2,3).
\end{align*}
\end{thm}

The proof will be based on the following proposition whose proof is straightforward. 

\begin{prop}\label{propDCWN}
Let $(N,g,J)$ be an almost Hermitian manifold and let $\nabla$ be a metric connection on $TN$. Set $w(\cdot,\cdot)=g(J\cdot,\cdot)$ and $d^{c}w(\cdot,\cdot,\cdot)=-dw(J\cdot,J\cdot,J\cdot)$. Given $X_{i}\in T_{x}N$, for $i\in \{1,2,3\}$, 
\[d^{c}w(X_{1},X_{2},X_{3})=g((\nabla_{JX_{1}} J)X_{2},X_{3}) - g(T^{\nabla}(JX_{1},JX_{2}),X_{3}) +cyclic(1,2,3).\]
\end{prop}
To apply this proposition to the setting of Theorem \ref{thmDCW}, we will define the following connection on $T\mathcal{T}$ that will be compatible with $g^{\nabla}$. First note that since the connection $\pi^{*}\nabla' :=\pi^{*}\nabla + \frac{1}{2}(\pi^{*}\nabla \phi)\phi$ on $\pi^{*}TM$ satisfies $\pi^{*}\nabla'\phi =0$, it extends to a connection on $V\mathcal{T}$. Using the splitting $T\mathcal{T}=V\mathcal{T} \oplus H^{\nabla}\mathcal{T}$, we then define \[D= \pi^{*}\nabla + \frac{1}{2}(\pi^{*}\nabla \phi)\phi \oplus \pi^{*}\nabla. \] 

\begin{prop}\label{propDCONN}
$D$ is a connection on $T\mathcal{T}$ that is compatible with $g^{\nabla}$ and satisfies $D\mathcal{J}^{(\nabla,I)}=  0 \oplus \pi^{*}\nabla \pi^{*}I$.
\end{prop}

We will now express $T^{D}$ in terms of $R^{\nabla}$ and $T^{\nabla}$ and will then prove Theorem \ref{thmDCW}.
\begin{thm} \label{thmDTOR} For $X,Y \in T_{K}\mathcal{T},$
\begin{align*}
&1) \ P^{\nabla}T^{D}(X,Y)= [\pi^{*}R^{\nabla}(X,Y),\phi]\\
&2) \ \pi_{*}T^{D}(X,Y)= \pi^{*}T^{\nabla}(X,Y).
\end{align*}
\end{thm}

\begin{proof}
To prove Part 1), consider 
\begin{align*}P^{\nabla}T^{D}(X,Y)& = P^{\nabla}(D_{X}Y-D_{Y}X - [X,Y])\\ &= D_{X}P^{\nabla}(Y)-D_{Y}P^{\nabla}(X) - P^{\nabla}([X,Y]).
\end{align*}
Using that $P^{\nabla}(X)= \pi^{*}\nabla_{X} \phi$ (see Proposition \ref{PropPF}) and the definition of $D$, this becomes 
\begin{align*}
&\pi^{*}\nabla_{X}\pi^{*}\nabla_{Y} \phi - \pi^{*}\nabla_{Y}\pi^{*}\nabla_{X} \phi - \pi^{*}\nabla_{[X,Y]}\phi
\\ &+\frac{1}{2}[(\pi^{*}\nabla_{X}\phi)\phi, \pi^{*}\nabla_{Y}\phi]- \frac{1}{2}[(\pi^{*}\nabla_{Y}\phi)\phi, \pi^{*}\nabla_{X}\phi].
\end{align*}
Since the last two terms add to zero, this equals 
\begin{align*}
R^{(\pi^{*}\nabla,End\pi^{*}TM)}(X,Y)\phi = [R^{(\pi^{*}\nabla,\pi^{*}TM)}(X,Y),\phi] = [\pi^{*}R^{(\nabla,TM)}(X,Y),\phi],
\end{align*}
where $R^{(\nabla',E)}$ is the curvature associated with a connection $\nabla'$ on a vector bundle $E$.

The proof of Part 2) is straightforward.
\end{proof}

We will now prove Theorem \ref{thmDCW}.

\begin{proof}[Proof of Theorem \ref{thmDCW}]
Set $\mathcal{J}= \mathcal{J}^{(\nabla,I)}$ and let $X_{i}\in T_{K}\mathcal{T}$ with $\pi_{*}X_{i}=v_{i}$ and  $i\in \{1,2,3\}$. By Propositions \ref{propDCWN} and \ref{propDCONN}, 
\begin{align*}
d^{c}w^{(\nabla,I)}(X_{1},X_{2},X_{3})&= g^{\nabla}((D\mathcal{J})_{\mathcal{J}X_{1}}X_{2},X_{3})\\&- g^{\nabla}(T^{D}(\mathcal{J}X_{1},\mathcal{J}X_{2}),X_{3}) + cyclic (1,2,3).
\end{align*}

Using Proposition \ref{propDCONN} and Theorem \ref{thmDTOR}, this becomes
\begin{align*}
&tr([\pi^{*}R^{\nabla}(\mathcal{J}X_{1},\mathcal{J}X_{2}),\phi]P^{\nabla}(X_{3})) + g((\pi^{*}\nabla_{\mathcal{J}X_{1}}\pi^{*}I)v_{2},v_{3})\\ & -g(\pi^{*}T^{\nabla}(\mathcal{J}X_{1},\mathcal{J}X_{2}),v_{3}) +cyclic(1,2,3).
\end{align*}
This equals 
\begin{align*}
&tr([R^{\nabla}(Iv_{1},Iv_{2}), \phi]P^{\nabla}(X_{3}))+ g((\nabla I)_{Iv_{1}}v_{2},v_{3})\\& -g(T^{\nabla}(Iv_{1},Iv_{2}),v_{3}) + cyclic(1,2,3).
\end{align*}
\end{proof}

\subsection{Hermitian Pairs with Equal Torsion}
As a first application of Theorem \ref{thmDCW}, we will construct a metric and two compatible complex structures on the twistor space that have equal torsions. To do so, let $(M,g,I)$ be a Hermitian manifold and let $\nabla$ be a metric connection on $TM$ with $(1,1)$ curvature. Along with $\mathcal{J}^{(\nabla,I)}$, the integrable complex structure $-\mathcal{J}^{(\nabla,-I)}$ is compatible with $g^{\nabla}$. Letting  $w^{(\nabla,1)}$ and $w^{(\nabla,2)}$ be the respective fundamental two forms, we have 

\begin{thm}Given $X_{i}\in T_{K}\mathcal{T}$ with $\pi_{*}X_{i}=v_{i}$ and  $i\in \{1,2,3\}$, \\
\hspace*{1cm}1) $d^{c}w^{(\nabla,1)}(X_{1},X_{2},X_{3}) = d^{c}w^{(\nabla,2)}(X_{1},X_{2},X_{3})$ 
\begin{align*} 
 2)\ d^{c}w^{(\nabla,1)}(X_{1},X_{2},X_{3}) &=tr([R^{\nabla}(v_{1},v_{2}), \phi] P^{\nabla}(X_{3})) \\ &  + g((\nabla_{Iv_{1}} I)v_{2},v_{3}) - g(T^{\nabla}(Iv_{1},Iv_{2}),v_{3}) \\ &  + cyclic(1,2,3).
\end{align*}
\end{thm}

\begin{rmk} \label{rmkNOTDC}
In this notation, the ``$d^{c}$" in $d^{c}w^{(\nabla,1)}$ is with respect to $\mathcal{J}^{(\nabla,I)}$ and that in $d^{c}w^{(\nabla,2)}$ is with respect to $-\mathcal{J}^{(\nabla,-I)}$.
\end{rmk}

\subsection{Strong HKT Manifolds} \label{secHERSKT1}
Let $(M,g,I,J,K)$ be a strong hyperkahler with torsion (strong HKT) manifold so that $I,J$ and $K$ are metric compatible complex structures that satisfy 
\begin{itemize}
\item $\{ I,J\}=0$ and $K=IJ$ 
\item $\nabla^{+}I=0, \nabla^{+}J=0$ and $\nabla^{+}K=0,$ where $\nabla^{+}= \nabla^{L} + \frac{1}{2}g^{-1}H$, $\nabla^{L}$ is the Levi Civita connection and $H$ is a closed three form.
\end{itemize} 
Setting $\nabla^{-}=\nabla^{L} - \frac{1}{2}g^{-1}H$ and using Theorem \ref{thmHYPINT1}, we have

\begin{prop}  \label{propHYPINT1234} 
$\mathcal{J}^{(\nabla^{-},I)}, \mathcal{J}^{(\nabla^{-},J)}$ and $\mathcal{J}^{(\nabla^{-},K)}$ are three integrable complex structures on $\mathcal{T}$ that are compatible with the metric $g^{\nabla^{-}}$. 
\end{prop}
The associated torsions are equal (see Remark \ref{rmkNOTDC} on notation):

\begin{thm} \label{thmHYP1} Given $X_{i}\in T_{K'}\mathcal{T}$ with $\pi_{*}X_{i}=v_{i}$ and  $i\in \{1,2,3\}$, \\
1) \ $d^{c}w^{(\nabla^{-},I)}(X_{1},X_{2},X_{3}) =  
d^{c}w^{(\nabla^{-},J)}(X_{1},X_{2},X_{3})  =
d^{c}w^{(\nabla^{-},K)}(X_{1},X_{2},X_{3}) $
\begin{align*} 
  \hspace*{-.7cm} 2) \ d^{c}w^{(\nabla^{-},I)}(X_{1},X_{2},X_{3}) &=tr([R^{\nabla^{-}}(v_{1},v_{2}), \phi] P^{\nabla^{-}}(X_{3})) + cyclic(1,2,3) \\ &  -H(v_{1},v_{2},v_{3}).
\end{align*}
\end{thm}

The proof is based on Theorem \ref{thmDCW} and the following lemma.
\begin{lemma}
Let $(M,g,I)$ be a Hermitian manifold and let $\nabla^{-}=\nabla^{L}-\frac{1}{2}g^{-1}H$, where $H=-d^{c}w$ and $w(\cdot,\cdot)=g(I\cdot,\cdot)$. For $v_{1}, v_{2}, v_{3} \in T_{x}M$,  \[-H(v_{1},v_{2},v_{3})= g(  (\nabla^{-}_{Iv_{1}}I)v_{2},v_{3}) -g( T^{\nabla^{-}}(Iv_{1},Iv_{2}),v_{3}) +cyclic(1,2,3).\]
\end{lemma}
\begin{proof}
Using $\nabla^{-}I= -[g^{-1}H,I]$ and $T^{\nabla^{-}}(v_{1},v_{2})=-g^{-1}H_{v_{1}}v_{2}$, 
\begin{align*}
&g((\nabla^{-}_{Iv_{1}}I)v_{2},v_{3}) -g( T^{\nabla^{-}}(Iv_{1},Iv_{2}),v_{3}) +cyclic(1,2,3)\\&=
-g([g^{-1}H_{Iv_{1}},I]v_{2},v_{3}) +g( g^{-1}H_{Iv_{1}}Iv_{2},v_{3}) +cyclic(1,2,3).
\end{align*}
Since $H$ is $(1,2)+(2,1)$ with respect to $I$, $[g^{-1}H_{Iv},I]= I[g^{-1}H_{v},I]$, so that the above becomes
\begin{align*}
& -g(I[g^{-1}H_{v_{1}},I]v_{2},v_{3}) +g( g^{-1}H_{Iv_{1}}Iv_{2},v_{3}) +cyclic(1,2,3)
\\&= g(g^{-1}H_{v_{1}}Iv_{2},Iv_{3}) -g(g^{-1}H_{v_{1}}v_{2},v_{3}) + g(g^{-1}H_{Iv_{1}}Iv_{2},v_{3}) + cyclic(1,2,3)
\\&=H(v_{1},Iv_{2},Iv_{3}) -H(v_{1},v_{2},v_{3})+H(Iv_{1},Iv_{2},v_{3}) +cyclic(1,2,3).
\end{align*}
Using that $H$ is $(1,2)+(2,1)$, this equals 
\begin{align*}
& -H(Iv_{1},v_{2},Iv_{3})-H(Iv_{2},v_{3},Iv_{1}) -H(Iv_{3},v_{1},Iv_{2})
\\ &= -H(Iv_{1},v_{2},Iv_{3})-H(Iv_{1},Iv_{2},v_{3})-H(v_{1},Iv_{2},Iv_{3})
\\&=-H(v_{1},v_{2},v_{3}).
\end{align*} 
\end{proof}

\subsubsection{Hyperkahler Manifolds}
If we restrict to the case when $(M,g,I,J,K)$ is hyperkahler, so that $H=0$, then the torsions associated with the metric $g^{\nabla^{L}}$ and the complex structures $\mathcal{J}^{(\nabla^{L},I)}, \mathcal{J}^{(\nabla^{L},J)}$ and $\mathcal{J}^{(\nabla^{L},K)}$ are equal and generally nonzero:

\begin{thm}  \label{thmHYP2}
Let $(M,g,I,J,K)$ be a hyperkahler manifold. Given $X_{i}\in T_{K'}\mathcal{T}$ with $\pi_{*}X_{i}=v_{i}$ and  $i\in \{1,2,3\}$, \\
1) \ $d^{c}w^{(\nabla^{L},I)}(X_{1},X_{2},X_{3}) =  
d^{c}w^{(\nabla^{L},J)}(X_{1},X_{2},X_{3})  =
d^{c}w^{(\nabla^{L},K)}(X_{1},X_{2},X_{3})$          
\begin{align*}                      
 \hspace*{-.5cm} 2)\ d^{c}w^{(\nabla^{L},I)}(X_{1},X_{2},X_{3})   =tr([R^{\nabla^{L}}(v_{1},v_{2}), \phi] P^{\nabla^{L}}(X_{3})) + cyclic(1,2,3). 
\end{align*}
\end{thm}

%

\subsection{Bihermitian Manifolds}

Let $(M,g,J_{+},J_{-})$ be a bihermitian manifold, so that \[\nabla^{+}J_{+}=0 \text{ and } \nabla^{-}J_{-}=0,\] where $\nabla^{\pm}= \nabla^{L} \pm \frac{1}{2}g^{-1}H$, $\nabla^{L}$ is the Levi Civita connection and $H$ is a closed three form. Using Theorem \ref{thmCOMBIHTWO}, we have the following two Hermitian structures on $\mathcal{T}:$ $(g^{\nabla^{+}}, \mathcal{J}^{(\nabla^{+},J_{-})})$ and $(g^{\nabla^{-}}, \mathcal{J}^{(\nabla^{-},J_{+})})$. 

The first step will be to compare the two metrics $g^{\nabla^{+}}$ and $g^{\nabla^{-}}$ in the following proposition, whose proof is straightforward.

\begin{prop} 
\mbox{} 

\hspace*{.001cm} 1) For $X,Y \in  H_{K}^{\nabla^{L}}\mathcal{T}$ with $\pi_{*}X=v$ and $\pi_{*}Y=w$, 
\begin{align*}
 g^{\nabla^{+}}(X,Y)=& g^{\nabla^{-}}(X,Y) \\
                   				=& g(v,w) -\frac{tr}{4}([g^{-1}H_{v},\phi] [g^{-1}H_{w},\phi]).
\end{align*}

\hspace*{.001cm} 2) For $A,B \in  V_{K}\mathcal{T}$, 
\begin{align*}
g^{\nabla^{+}}(A,B)=g^{\nabla^{-}}(A,B).
\end{align*}

\hspace*{.001cm}
3) For $A \in  V_{K}\mathcal{T}$ and $Y \in  H_{K}^{\nabla^{L}}\mathcal{T}$ with $\pi_{*}Y=w$, 
\begin{align*}
 g^{\nabla^{+}}(A,Y)=& -g^{\nabla^{-}}(A,Y) \\
                   				=& -\frac{tr}{2}(A[g^{-1}H_{w},\phi]).
\end{align*}
\end{prop}

Consequently, the above constructions do not yield a metric on $\mathcal{T}$ that is compatible  with both complex structures $\mathcal{J}^{(\nabla^{+},J_{-})}$ and $\mathcal{J}^{(\nabla^{-},J_{+})}$. In fact, one can show that such metrics do not exist on the twistor space of a general bihermitian manifold. (Though perhaps they exist on certain submanifolds of $\mathcal{T}$.) In the case when $(J_{+}-J_{-})$ is invertible, I have used different constructions to build such metrics on all of $\mathcal{T}$. The key is to use splittings of $T\mathcal{T}$ that are not induced from connections on $TM$. I will present these results in a separate paper. 


\section{Twistors and Grassmannians}
\label{SecGRASS}
In Sections \ref{SecCC} and \ref{SecDOP}, we raised several questions about the complex manifold structure of $(\mathcal{C}(E),\mathcal{J}^{(\nabla,I)})$, where $R^{\nabla}$ is $(1,1)$. In this section we will address these questions by holomorphically embedding $\mathcal{C}$ into a more familiar complex manifold---a certain Grassmannian bundle.   Indeed, as we noted previously, the condition that $R^{\nabla}$ is (1,1) is equivalent to $\nabla^{0,1}$ being a $\overline{\partial}$-operator on $E_{\mathbb{C}}$, and if we let $rankE=2n$ then the Grassmannian bundle that we will take will be the holomorphic bundle $Gr_{n}(E_{\mathbb{C}})$.

To define the embedding, we will first show how to holomorphically embed the fibers of $\mathcal{C}$ into those of $Gr_{n}(E_{\mathbb{C}})$. 

\subsection{Embedding the Fibers}
Let $V$ be a $2n$ dimensional real vector space and let $Gr_{n}(V_{\mathbb{C}})$ be the Grassmannians of complex $n$ planes. The map that we will consider is 
\begin{align*}
\psi: \mathcal{C}(V) &\longrightarrow Gr_{n}(V_{\mathbb{C}})\\
 J &\longrightarrow V^{0,1}_{J};
\end{align*}
it has the following properties:

\begin{prop}
\label{PropVECEM}
\begin{align*}
& \text{ 1) The map }  \psi: \mathcal{C}(V) \longrightarrow Gr_{n}(V_{\mathbb{C}}) \text{ is a holomorphic embedding. } \\
& \text{ 2) The image of } \psi \text{ is } \{ P \in Gr_{n}(V_{\mathbb{C}}) | P \oplus \overline{P}=V_{\mathbb{C}}\}, \text{which is an open}  \\ & \ \ \ \  \text{ submanifold of the Grassmannians. }
\end{align*}

\end{prop}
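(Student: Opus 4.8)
The plan is to treat the two assertions somewhat separately: I would establish the set-theoretic bijection and the holomorphicity by direct computation, and then upgrade to an embedding via a dimension count. The underlying algebra to record first is that for $J \in \mathcal{C}(V)$ the complexification splits as $V_{\mathbb{C}} = V^{1,0}_J \oplus V^{0,1}_J$ into the $\pm i$-eigenspaces, each of complex dimension $n$, and that since $J$ is a \emph{real} endomorphism it commutes with complex conjugation, so $V^{1,0}_J = \overline{V^{0,1}_J}$. This gives injectivity of $\psi$ at once: the plane $P = V^{0,1}_J$ together with its conjugate recovers the full eigenspace decomposition, hence recovers $J$ as the endomorphism acting by $-i$ on $P$ and $+i$ on $\overline{P}$. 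The same formula, run in reverse, proves the image characterization: given $P \in Gr_n(V_{\mathbb{C}})$ with $P \oplus \overline{P} = V_{\mathbb{C}}$, define $J_P$ to be multiplication by $-i$ on $P$ and $+i$ on $\overline{P}$; the only point needing care is that $J_P$ is real, i.e. commutes with conjugation, which is immediate from the two eigenvalues being complex conjugates, so $J_P$ restricts to $V$ with $J_P^2 = -1$ and $\psi(J_P) = P$. Conversely any $P$ in the image obviously satisfies $P \oplus \overline{P} = V_{\mathbb{C}}$. Openness of this image is a transversality statement: the pair $(P, \overline{P})$ depends continuously on $P$ while ``being a direct sum'' (equivalently $P \cap \overline{P} = 0$) is an open condition on pairs of $n$-planes, so the image is open in $Gr_n(V_{\mathbb{C}})$.

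The analytic heart of the argument is the holomorphicity of $\psi$, and this is where I would be most careful. Identifying $T_P Gr_n(V_{\mathbb{C}}) \cong \mathrm{Hom}_{\mathbb{C}}(P, V_{\mathbb{C}}/P)$ with its natural complex structure (multiplication by $i$), I would compute $d\psi_J$ on a tangent vector $A \in T_J\mathcal{C} = \{A : \{A,J\}=0\}$ by differentiating a curve of eigenvectors. If $J(t)v(t) = -i\,v(t)$ with $J(0)=J$, $\dot J(0) = A$ and $v(0)=v \in V^{0,1}_J$, then differentiating yields $(J+i)\dot v(0) = -Av$; since $A$ anticommutes with $J$ it sends $V^{0,1}_J$ into $V^{1,0}_J$, and projecting onto $V^{1,0}_J \cong V_{\mathbb{C}}/V^{0,1}_J$ gives the clean formula $d\psi_J(A)(v) = \tfrac{i}{2}Av$. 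The holomorphicity check is then one line: since $I_{\mathcal{C}}A = JA$ and $Av \in V^{1,0}_J$ so that $J(Av) = i\,Av$, we get $d\psi_J(I_{\mathcal{C}}A)(v) = \tfrac{i}{2}J(Av) = i\cdot\tfrac{i}{2}Av = i\,d\psi_J(A)(v)$, which is precisely $d\psi_J \circ I_{\mathcal{C}} = i\,d\psi_J$.

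Finally I would assemble these into the embedding statement. The formula for $d\psi_J$ shows it is injective: if $Av = 0$ for all $v \in V^{0,1}_J$ then, using reality of $A$ together with $V^{1,0}_J = \overline{V^{0,1}_J}$, the endomorphism $A$ also vanishes on $V^{1,0}_J$, whence $A = 0$. Thus $\psi$ is an injective immersion, and since $\dim_{\mathbb{R}}\mathcal{C}(V) = 2n^2 = \dim_{\mathbb{R}} Gr_n(V_{\mathbb{C}})$, it is a local diffeomorphism; an injective local diffeomorphism is an open map and hence a diffeomorphism onto its open image, which yields the holomorphic embedding and simultaneously confirms the image description already obtained. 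The main obstacle I anticipate is purely bookkeeping in the differential computation — correctly identifying the Grassmannian tangent space, tracking the factor and sign in $(J+i)\dot v = -Av$, and using $\{A,J\}=0$ to place $Av$ in the correct eigenspace; once $d\psi_J(A)(v) = \tfrac{i}{2}Av$ is in hand, holomorphicity, immersivity, and the dimension count all follow quickly.
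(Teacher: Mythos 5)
Your proof is correct and follows essentially the same route as the paper: both compute $d\psi_J$ in the standard identification of $T_{P}Gr_{n}(V_{\mathbb{C}})$ with $\mathrm{Hom}(P,V_{\mathbb{C}}/P)$ (the paper via the graph chart and the conjugation curves $exp(-tA/2)Jexp(tA/2)$, you by differentiating the eigenvector equation; your formula $d\psi_{J}(A)=\tfrac{i}{2}A|_{V^{0,1}_{J}}$ agrees with the paper's $-\tfrac{1}{2}AJ$ on $V^{0,1}_{J}$) and then check $d\psi_{J}\circ I_{\mathcal{C}}=i\,d\psi_{J}$. The injectivity, image description, openness, and the immersion-plus-dimension-count upgrade to an embedding are precisely the parts the paper labels straightforward, and your treatment of them is sound.
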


\begin{proof}
Consider $\psi_{*}: T_{J} \mathcal{C}(V) \longrightarrow T_{V^{0,1}_{J}}Gr_{n}(V_{\mathbb{C}})$ and choose the holomorphic chart
\begin{align*} 
End(V^{0,1}_{J},&V^{1,0}_{J}) \longrightarrow Gr_{n}(V_{\mathbb{C}}) \\
B & \longrightarrow Graph(B),
\end{align*}
where $Graph(B)=\{ v^{0,1} + Bv^{0,1}| v^{0,1} \in V^{0,1}_{J}\}.$
If we let $A$ be a general element in $T_{J} \mathcal{C}(V)\cong \{D \in EndV | \{D,J\}=0\}$ then we need to show that $\psi_{*}(JA)= \mathcal{I}\psi_{*}(A)$, where $\mathcal{I}$ is the complex structure on the Grassmannians. 

First consider, 
\begin{align*}
\psi_{*}(JA) &= \frac{d}{dt}|_{t=0}\psi(exp(\frac{-tA}{2})Jexp(\frac{tA}{2}))\\
                         &= \frac{d}{dt}|_{t=0}exp(\frac{-tA}{2})(V^{0,1}_{J}). 
  \end{align*}                       
    Using the above chart, $\psi_{*}(JA)$ then corresponds to $-\frac{A}{2}$, as an element of $End(V^{0,1}_{J},V^{1,0}_{J})$.
 
  Similarly we have $\psi_{*}(A)= \frac{d}{dt}|_{t=0}exp(-\frac{tAJ}{2})(V^{0,1}_{J})$, so that            
under the above chart, $\mathcal{I}\psi_{*}(A)$ corresponds to $-\frac{iAJ}{2}$, which as an element of $End(V^{0,1}_{J},V^{1,0}_{J})$ equals $-\frac{A}{2}$.

The proof of the other parts of the proposition is straightforward.
\end{proof}

If we now choose a positive definite metric, $g$, on $V$ then by restriction, the above map, $\psi$,  gives a holomorphic embedding of $\mathcal{T}(V)$ into $Gr_{n}(V_{\mathbb{C}})$. Since the metric is positive definite, the image of this map is precisely $MI(V_{\mathbb{C}})= \{P \in Gr_{n}(V_{\mathbb{C}}) | g(v,w)=0, \forall v,w \in P\} $, the space of maximal isotropics of $V_{\mathbb{C}}$ defined by using the $\mathbb{C}$-bilinearly extended metric. For convenience we state this as a proposition.

\begin{prop}
 \begin{align*}
 \mathcal{T}(V) &\longrightarrow Gr_{n}(V_{\mathbb{C}}) \\
     J & \longrightarrow V^{0,1}_{J}    
\end{align*}
is a holomorphic embedding with image $MI(V_{\mathbb{C}})$.
\end{prop}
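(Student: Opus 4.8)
The plan is to deduce everything from Proposition \ref{PropVECEM} together with the earlier observation that $\mathcal{T}(V)$ is a complex submanifold of $\mathcal{C}(V)$. Since the displayed map is precisely the restriction $\psi|_{\mathcal{T}(V)}$ of the holomorphic embedding $\psi: \mathcal{C}(V) \longrightarrow Gr_{n}(V_{\mathbb{C}})$ to a complex submanifold, it is automatically a holomorphic embedding onto its image. Thus the only real content is the identification of that image with $MI(V_{\mathbb{C}})$.

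First I would characterize which $J \in \mathcal{C}(V)$ land in $MI(V_{\mathbb{C}})$ under $\psi$. Recalling that $V^{0,1}_{J}$ is the $-i$-eigenspace of the $\mathbb{C}$-linear extension of $J$, for $v,w \in V^{0,1}_{J}$ one computes $g(Jv,Jw)=(-i)^{2}g(v,w)=-g(v,w)$ using the $\mathbb{C}$-bilinearity of the extended metric. Hence if $J \in \mathcal{T}(V)$, so that $g(Jv,Jw)=g(v,w)$, then $g(v,w)=0$; since $\dim_{\mathbb{C}} V^{0,1}_{J}=n$, the space $V^{0,1}_{J}$ is maximal isotropic and $\psi(\mathcal{T}(V)) \subseteq MI(V_{\mathbb{C}})$. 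For the converse I would split $V_{\mathbb{C}}=V^{1,0}_{J} \oplus V^{0,1}_{J}$, note that $V^{1,0}_{J}=\overline{V^{0,1}_{J}}$ is also isotropic, and check $g(Jv,Jw)=g(v,w)$ separately on the three types of pairs drawn from these two eigenspaces; the only nonzero contributions come from mixed pairs, where the factor $i \cdot (-i)=1$ gives exactly $g(Jv,Jw)=g(v,w)$. This shows that $V^{0,1}_{J}$ isotropic implies $J \in \mathcal{T}(V)$.

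It then remains to see that every $P \in MI(V_{\mathbb{C}})$ actually occurs as some $V^{0,1}_{J}$, i.e. that $MI(V_{\mathbb{C}})$ lies in the image $\{P : P \oplus \overline{P}=V_{\mathbb{C}}\}$ of the full embedding $\psi$ from $\mathcal{C}(V)$ (Proposition \ref{PropVECEM}, part 2). Here is where positive definiteness of $g$ enters: if $v \in P \cap \overline{P}$, writing $v=a+ib$ with $a,b \in V$ real, both $v$ and $\overline{v}$ lie in $P$, so $a,b \in P$; isotropy then forces $g(a,a)=g(b,b)=0$, and positive definiteness gives $a=b=0$. Hence $P \cap \overline{P}=0$ and, by a dimension count, $P \oplus \overline{P}=V_{\mathbb{C}}$. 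Combining the two directions, $\psi$ restricts to a bijection $\mathcal{T}(V) \longrightarrow MI(V_{\mathbb{C}})$, which together with the first paragraph completes the identification of the image.

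I expect the main obstacle to be bookkeeping rather than anything conceptual: keeping the eigenvalue conventions and the $\mathbb{C}$-bilinear (as opposed to Hermitian) extension of $g$ consistent throughout, so that the sign computations $(-i)^{2}=-1$ and $i \cdot (-i)=1$ land correctly and the equivalence that $J$ is $g$-orthogonal if and only if $V^{0,1}_{J}$ is isotropic comes out cleanly in both directions.
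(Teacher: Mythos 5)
Your argument is correct and follows the same route the paper takes: the paper simply asserts that the map is the restriction of the embedding $\psi$ of Proposition \ref{PropVECEM} to the complex submanifold $\mathcal{T}(V)$, and that positive definiteness of $g$ identifies the image with $MI(V_{\mathbb{C}})$. Your write-up supplies exactly the eigenvalue computations ($(-i)^2=-1$ for isotropy, $i\cdot(-i)=1$ for the converse) and the $P\cap\overline{P}=0$ argument that the paper leaves implicit, and all of these check out.
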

\subsection{The Holomorphic Embedding}
\label{SecTHE}
Let us now consider a rank $2n$ real vector bundle $E \longrightarrow (M,I)$ that is fibered over a complex manifold. As discussed above, a connection $\nabla$ on $E$ with (1,1) curvature gives rise to two complex analytic manifolds: the twistor space $(\mathcal{C}, \mathcal{J}^{(\nabla,I)})$ and the  holomorphic fiber bundle $\pi_{Gr}:Gr_{n}(E_{\mathbb{C}}) \longrightarrow M$.  To holomorphically embed $\mathcal{C}$ into $Gr_{n}(E_{\mathbb{C}})$, we will  generalize the map $\psi$ that was defined in the previous section: 

\begin{thm}
\label{ThmGEM}
The map
\begin{align*}
\psi: (\mathcal{C}, \mathcal{J}^{(\nabla,I)})& \longrightarrow Gr_{n}(E_{\mathbb{C}}) \\
& J \longrightarrow E^{0,1}_{J}
\end{align*}
is a holomorphic embedding. 
\end{thm}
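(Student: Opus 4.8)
The plan is to verify the two defining features of a holomorphic embedding separately: that $\psi$ is a smooth embedding, and that it intertwines the two complex structures. Both will be reduced to the fiberwise statement already established in Proposition \ref{PropVECEM} together with a compatibility of $\psi$ with the $\nabla$-horizontal distributions on $\mathcal{C}$ and on $Gr_n(E_{\mathbb C})$.

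First I would record that $\psi$ is a morphism of fiber bundles over $(M,I)$: it covers the identity on $M$, so $\pi_{Gr}\circ\psi=\pi$, and its restriction to each fiber $\mathcal{C}(E_x)$ is exactly the map $J\mapsto (E_x)^{0,1}_J$ of Proposition \ref{PropVECEM}. Since that fiberwise map is injective with open image $\{P\mid P\oplus\overline{P}=(E_x)_{\mathbb C}\}$ and is a homeomorphism onto it, and since $\psi$ commutes with the projections, $\psi$ is globally injective (one recovers $J$ from $E^{0,1}_J$ as the endomorphism equal to $-i$ on $E^{0,1}_J$ and $+i$ on its conjugate) and a homeomorphism onto its image. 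Injectivity of $\psi_*$ then follows from its fiberwise injectivity on the vertical distribution together with $\pi_{Gr*}\psi_*=\pi_*$ on a horizontal complement, so $\psi$ is an immersion and hence an embedding.

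For holomorphicity I would split both tangent spaces by means of $\nabla$. On the source, Lemma \ref{LemCSPLIT} gives $T_J\mathcal{C}=V_J\mathcal{C}\oplus H^{\nabla}_J\mathcal{C}$, and on the target the same connection, extended $\mathbb C$-linearly to $E_{\mathbb C}$, induces a splitting of $T_{E^{0,1}_J}Gr_n(E_{\mathbb C})$ into its vertical space and a $\nabla$-horizontal complement. The claim is that $\psi_*$ respects these splittings. On the vertical part this is immediate: $\psi$ is a bundle map, so $\psi_*$ carries $V_J\mathcal{C}$ into the vertical tangent space of the Grassmannian fiber, and there $\mathcal{J}^{(\nabla,I)}$ acts as left multiplication by $J$, which is intertwined with the complex structure $\mathcal{I}_{Gr}$ of $Gr_n(E_{\mathbb C})$ by Proposition \ref{PropVECEM}. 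The horizontal part rests on the observation that parallel transport commutes with $\psi$: if $J(t)$ is the $\nabla$-parallel transport of $J$ along a curve $\gamma$ and $e(t)$ is a parallel section of $E_{\mathbb C}$, then $J(t)e(t)$ is parallel and equals $-ie(t)$ at $t=0$ whenever $e\in E^{0,1}_J$, so $J(t)e(t)=-ie(t)$ for all $t$; hence $E^{0,1}_{J(t)}$ is precisely the parallel transport of $E^{0,1}_J$. Since the horizontal lift $v^{\nabla}\in H^{\nabla}_J\mathcal{C}$ and the horizontal lift on $Gr_n(E_{\mathbb C})$ are both defined by such parallel transport, this shows $\psi_*(v^{\nabla})$ equals the Grassmannian horizontal lift of $v$.

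The hard part will be showing that $\mathcal{I}_{Gr}$ acts on these horizontal lifts as $v^{\nabla}\mapsto (Iv)^{\nabla}$, matching Definition \ref{JI}; this is exactly the step where the $(1,1)$ hypothesis enters, through Lemma \ref{LemDO}. I would work in a local holomorphic frame of $E_{\mathbb C}$, in which the induced trivialization $Gr_n(E_{\mathbb C})|_U\cong U\times Gr_n(\mathbb C^{2n})$ is biholomorphic and $\mathcal{I}_{Gr}$ is the product of $I$ with the standard structure on the Grassmannian. Writing $\nabla=d+A$ in this frame, Lemma \ref{LemDO} gives $\nabla^{0,1}=\overline{\partial}$, i.e. $A^{0,1}=0$, so $A$ is of type $(1,0)$ and satisfies $A(Iv)=iA(v)$. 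Computing the horizontal lift of $v$ in the chart $\mathrm{Hom}(E^{0,1}_J,E_{\mathbb C}/E^{0,1}_J)$ as the vertical component $-[A(v)]$ together with the base component $v$, the identity $A(Iv)=iA(v)$ is precisely what forces $\mathcal{I}_{Gr}$ to send the horizontal lift of $v$ to that of $Iv$. Combining the vertical and horizontal computations yields $\psi_*\circ\mathcal{J}^{(\nabla,I)}=\mathcal{I}_{Gr}\circ\psi_*$, completing the proof.
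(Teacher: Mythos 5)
Your proposal is correct and follows essentially the same route as the paper: split $T\mathcal{C}$ and $TGr_n(E_{\mathbb{C}})$ via $\nabla$, handle the vertical directions by the fiberwise Proposition \ref{PropVECEM}, show $\psi$ intertwines horizontal lifts because parallel transport of $J$ carries $E^{0,1}_J$ to $E^{0,1}_{J(t)}$ (the paper's Lemma \ref{LemPH}, with your identical $J(t)e(t)=-ie(t)$ argument), and identify the Grassmannian's holomorphic structure with the connection-induced one via a local holomorphic frame in which $A^{0,1}=0$ (the paper's Proposition \ref{PropCSG}). The only difference is organizational: the paper isolates that last step as a separate proposition about general holomorphic bundles and their Grassmannians, whereas you perform the chart computation directly.
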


In the case when $E$ is equipped with a fiberwise metric $g$ and $\nabla$ is a metric connection, we will define $MI(E_{\mathbb{C}})$ to be the space of maximal isotropics in $Gr_{n}(E_{\mathbb{C}})$; we then have: 

 \begin{prop}
 \label{PropGEM}
 \begin{align*}
 (\mathcal{T}, \mathcal{J}^{(\nabla,I)}) &\longrightarrow Gr_{n}(E_{\mathbb{C}}) \\
J & \longrightarrow E^{0,1}_{J}    
\end{align*}
is a holomorphic embedding with image $MI(E_{\mathbb{C}})$.
\end{prop}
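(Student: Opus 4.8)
The plan is to obtain Proposition \ref{PropGEM} as the restriction of the holomorphic embedding of $\mathcal{C}$ constructed in Theorem \ref{ThmGEM}, combined with the fiberwise computation of the image already carried out for vector spaces. The key structural observation is that $\mathcal{T}=\mathcal{T}(E,g)$ sits inside $(\mathcal{C},\mathcal{J}^{(\nabla,I)})$ as a complex submanifold: since $\nabla g=0$ and $R^{\nabla}$ is $(1,1)$, this is precisely the content of Theorem \ref{Thm1c}, and the complex structure on $\mathcal{T}$ is the restriction of $\mathcal{J}^{(\nabla,I)}$ on $\mathcal{C}$. Because the map $\psi\colon(\mathcal{C},\mathcal{J}^{(\nabla,I)})\to Gr_{n}(E_{\mathbb{C}})$ sending $J$ to $E^{0,1}_{J}$ is already a holomorphic embedding by Theorem \ref{ThmGEM}, its restriction to the complex submanifold $\mathcal{T}$ is automatically holomorphic; and since the restriction of an embedding to an embedded submanifold is again an embedding, $\psi|_{\mathcal{T}}$ is a holomorphic embedding of $\mathcal{T}$ into $Gr_{n}(E_{\mathbb{C}})$. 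Thus the holomorphicity and embedding assertions require no new work beyond invoking these two results.

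The remaining task is to identify the image with $MI(E_{\mathbb{C}})$, and I would do this fiberwise, noting that $\psi|_{\mathcal{T}}$ is fiber-preserving (it maps $\mathcal{T}(E_{x},g_{x})$ into the Grassmannian fiber over $x$). Over a point $x\in M$ the map restricts to $\mathcal{T}(E_{x},g_{x})\to Gr_{n}((E_{x})_{\mathbb{C}})$, $J\mapsto (E_{x})^{0,1}_{J}$, whose image was shown to be $MI((E_{x})_{\mathbb{C}})$ in the proposition following Proposition \ref{PropVECEM}. It then follows that the total image of $\psi|_{\mathcal{T}}$ is $\bigcup_{x\in M}MI((E_{x})_{\mathbb{C}})=MI(E_{\mathbb{C}})$.

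For completeness I would record the one-line verification underlying the fiberwise statement, namely that $E^{0,1}_{J}$ is isotropic exactly when $J\in\mathcal{T}$. Extending $g$ complex-bilinearly, compatibility of $J$ with $g$ gives $g(Jv,Jw)=g(v,w)$; for $v,w\in E^{0,1}_{J}$ one has $Jv=-iv$ and $Jw=-iw$, so $g(v,w)=g(Jv,Jw)=(-i)^{2}g(v,w)=-g(v,w)$, forcing $g(v,w)=0$. Since the complex-bilinear extension of a positive definite form is nondegenerate, the maximal isotropic subspaces of $(E_{x})_{\mathbb{C}}$ have dimension $n$, so the $n$-dimensional isotropic $E^{0,1}_{J}$ is maximal; the reverse inclusion is the surjectivity half of the fiberwise proposition.

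The main obstacle here is bookkeeping rather than substance: one must make sure the fiberwise images assemble into the global set $MI(E_{\mathbb{C}})$ and that the embedding is proper, so that the image is a closed complex submanifold biholomorphic to $\mathcal{T}$. This is guaranteed by compactness of the fibers $\mathcal{T}(E_{x},g_{x})\cong O(2n)/U(n)$: the fiberwise embedding onto $MI((E_{x})_{\mathbb{C}})$ is closed, and since $\psi|_{\mathcal{T}}$ covers the identity on $M$ between two fiber bundles, it is a closed embedding globally.
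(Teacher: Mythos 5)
Your proposal is correct and takes essentially the same route the paper intends: Proposition \ref{PropGEM} is obtained by restricting the holomorphic embedding of Theorem \ref{ThmGEM} to the complex submanifold $\mathcal{T}\subset(\mathcal{C},\mathcal{J}^{(\nabla,I)})$ and identifying the image fiberwise via the vector-space statement following Proposition \ref{PropVECEM}. Your explicit isotropy computation and the remarks on properness are harmless additions to what the paper leaves implicit.
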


To prove Theorem \ref{ThmGEM}, we will need to describe the complex structure on the Grassmannians  similarly to how we defined $\mathcal{J}^{(\nabla,I)}$ on $\mathcal{C}$. The first step will be to define the horizontal distribution $H^{\nabla}Gr_{n}$ on $Gr_{n}(E_{\mathbb{C}})$. But before giving the definition, let us first recall that if $P \in Gr_{n}(E_{\mathbb{C}})$ and $\gamma: \mathbb{R} \longrightarrow M$ satisfies $\gamma(0)= \pi_{Gr}(P)$ then we can use $\nabla$, considered as a complex connection on $E_{\mathbb{C}}$, to parallel transport $P$ along $\gamma$ as follows. If we set $P=<e_{1},...,e_{n}>_{\mathbb{C}}$, so that $\{e_{i}\}$ is a basis for $P$,  then define $P(t)=<e_{1}(t),...,e_{n}(t)>_{\mathbb{C}}$, where $\gamma^{*}\nabla e_{i}(t)=0$ and $e_{i}(0)=e_{i}$. Since $\nabla$ is a complex connection on $E_{\mathbb{C}}$, $P(t)$ does not depend on the basis $\{e_{i}\}$ for $P$ that was chosen. 

With this, let us define the desired horizontal distribution on $Gr_{n}(E_{\mathbb{C}})$.

\begin{defi}
Let $H^{\nabla}_{P}Gr_{n}= \{ \frac{dP(t)}{dt}|_{t=0}| P(t) \text{ is the parallel translate of } $P$ \\ \text{ along } \gamma, \gamma(0)=\pi_{Gr}(P) \}.$ 
\end{defi}

Along with $H^{\nabla}Gr_{n}$, there is also the natural vertical distribution $VGr_{n}$; as it is defined by the fibers of $Gr_{n}(E_{\mathbb{C}})$, it is a complex vector bundle and satisfies  ${\pi_{Gr}} _{*}(V_{P}Gr_{n})=0$, for all $P \in Gr_{n}(E_{\mathbb{C}})$.   It is straightforward to prove that these two distributions are complements to each other: 
\begin{lemma}
$T_{P}Gr_{n}= V_{P}Gr_{n} \oplus H^{\nabla}_{P}Gr_{n}.$
 \end{lemma}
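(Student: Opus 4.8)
The plan is to mimic the construction that produced the splitting $TEndE = VEndE \oplus H^{\nabla}EndE$, the only new ingredient being that here we parallel-transport $n$-planes rather than endomorphisms. The essential point, from which everything else follows by linear algebra, is that the assignment $v \mapsto v^{\nabla}$ sending $v = \dot\gamma(0) \in T_x M$ (with $x = \pi_{Gr}(P)$) to the horizontal lift $\frac{dP(t)}{dt}|_{t=0}$ is a well-defined linear map $T_x M \to T_P Gr_n$ whose image is exactly $H^{\nabla}_P Gr_n$.

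First I would verify that this assignment is well-defined, i.e. that $\frac{dP(t)}{dt}|_{t=0}$ depends only on $\dot\gamma(0)$ and not on the full curve $\gamma$. Writing $P = \langle e_1, \ldots, e_n\rangle_{\mathbb{C}}$ and choosing a local frame for $E_{\mathbb{C}}$ near $x$, the parallel transport $e_i(t)$ is the solution of the first-order linear system $\dot e_i(t) = -\Gamma(\dot\gamma(t)) e_i(t)$, where $\Gamma$ is the connection matrix of $\nabla$. Hence $\dot e_i(0) = -\Gamma(v) e_i$ depends only on $v$, and since the velocity of the resulting curve $t \mapsto P(t) = \langle e_1(t), \ldots, e_n(t)\rangle_{\mathbb{C}}$ in $Gr_n(E_{\mathbb{C}})$ is determined by the initial data $\{e_i, \dot e_i(0)\}$ together with $v = \dot\gamma(0)$, well-definedness follows; linearity in $v$ is then immediate from the linearity of $v \mapsto \Gamma(v)$. (Because $\nabla$ is a complex connection on $E_{\mathbb{C}}$, the construction is moreover independent of the chosen basis $\{e_i\}$ for $P$, as already noted before the definition of $H^{\nabla}Gr_n$.)

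Granting this, the remaining steps are formal. Since $P(t)$ projects under $\pi_{Gr}$ to $\gamma(t)$, differentiating gives $(\pi_{Gr})_*(v^{\nabla}) = v$, so $v \mapsto v^{\nabla}$ is a right inverse --- a linear section --- of the surjection $(\pi_{Gr})_* : T_P Gr_n \to T_x M$, which is surjective because $\pi_{Gr}$ is a submersion. Consequently $(\pi_{Gr})_*$ restricts to an isomorphism of $H^{\nabla}_P Gr_n = \mathrm{image}(v \mapsto v^{\nabla})$ onto $T_x M$, whence $H^{\nabla}_P Gr_n \cap \ker (\pi_{Gr})_* = 0$; but $V_P Gr_n = \ker (\pi_{Gr})_*$ by definition of the vertical distribution. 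Finally, for any $X \in T_P Gr_n$ the decomposition $X = \bigl(X - ((\pi_{Gr})_* X)^{\nabla}\bigr) + ((\pi_{Gr})_* X)^{\nabla}$ exhibits $X$ as the sum of a vertical and a horizontal vector, so $T_P Gr_n = V_P Gr_n + H^{\nabla}_P Gr_n$; combined with the trivial intersection this yields the direct sum.

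I expect the only real obstacle to be the first step --- establishing that the horizontal lift is well-defined and linear. Once the parallel-transport system is written out in a local frame this is routine, and it is the direct analogue of the computation underlying Proposition \ref{PropPF}; all the subsequent assertions are elementary facts about a surjective linear map equipped with a chosen section.
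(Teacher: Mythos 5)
Your proof is correct. The paper supplies no argument for this lemma --- it simply declares the splitting straightforward --- and your construction (checking that the horizontal lift $v \mapsto v^{\nabla}$, with $\dot e_i(0) = -\Gamma(v)e_i$ in a local frame, is a well-defined linear section of $(\pi_{Gr})_{*}$, so that $T_{P}Gr_{n}$ splits as $\ker (\pi_{Gr})_{*} \oplus \mathrm{image}$) is exactly the standard argument intended, mirroring the equally unproven splitting $TEndE = VEndE \oplus H^{\nabla}EndE$ earlier in the paper.
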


We may now use the above lemma to define an almost complex structure on $Gr_{n}(E_{\mathbb{C}})$, which we will show in Proposition \ref{PropCSG} to be the complex structure that is induced by $\nabla^{0,1}$ and which we will use to prove Theorem \ref{ThmGEM}. As the definition of this almost complex structure is similar to that of $\mathcal{J}^{(\nabla,I)}$ on $\mathcal{C}$, we will denote it by the same symbol:
\begin{defi}
Let $\mathcal{J}^{(\nabla,I)}$ on $Gr_{n}(E_{\mathbb{C}})$ be defined as follows. First split 
\[ TGr_{n}= VGr_{n} \oplus H^{\nabla}Gr_{n} \]
and then let  
\[\mathcal{J}^{(\nabla,I)}= \mathcal{J}^{V} \oplus \pi^{*}_{Gr}I,\]
where $\mathcal{J}^{V}$ is the standard fiberwise complex structure on $VGr_{n}$ and where we have used the natural identification of $H^{\nabla}Gr_{n}$ with  $\pi^{*}_{Gr}TM$.
\end{defi}

If we consider the complex manifold structure of $Gr_{n}(E_{\mathbb{C}})$ that is induced by the $\overline{\partial}$-operator $\nabla^{0,1}$ on $E_{\mathbb{C}}$, we then have:

\begin{prop}
\label{PropCSG}
The complex structure on $Gr_{n}(E_{\mathbb{C}})$ is $\mathcal{J}^{(\nabla,I)}$.
\end{prop}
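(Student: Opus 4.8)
The plan is to show that the two almost complex structures on $Gr_{n}(E_{\mathbb{C}})$ — the structure $\mathcal{J}_{hol}$ coming from the holomorphic bundle structure induced by $\overline{\partial}=\nabla^{0,1}$ (Lemma \ref{LemDO}), and the split structure $\mathcal{J}^{(\nabla,I)}=\mathcal{J}^{V}\oplus\pi_{Gr}^{*}I$ — agree on the vertical and horizontal distributions separately. On $VGr_{n}$ this is immediate: since $\nabla^{0,1}$ makes $\pi_{Gr}\colon Gr_{n}(E_{\mathbb{C}})\to(M,I)$ a holomorphic fiber bundle, each fiber $Gr_{n}(E_{\mathbb{C}}|_{x})$ is a complex submanifold, so $\mathcal{J}_{hol}$ restricts on it to the standard complex structure of the Grassmannian, which is by definition $\mathcal{J}^{V}$. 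The entire content is therefore to check agreement on $H^{\nabla}Gr_{n}$.

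For this I would work in a local holomorphic frame. Because $\nabla^{0,1}$ is a $\overline{\partial}$-operator, near any point of $M$ there is a frame $\{s_{a}\}_{a=1}^{2n}$ of $E_{\mathbb{C}}$ with $\nabla^{0,1}s_{a}=0$, together with holomorphic coordinates $z^{k}$ on $M$. In the associated graph chart a point of $Gr_{n}(E_{\mathbb{C}})$ near $\langle s_{1},\dots,s_{n}\rangle$ is $\langle s_{i}+B^{\mu}_{i}s_{\mu}\rangle$ (with $i\le n<\mu$), so $(z^{k},B^{\mu}_{i})$ are holomorphic coordinates and $\mathcal{J}_{hol}$ is multiplication by $i$ on $\{\partial_{z^{k}},\partial_{B^{\mu}_{i}}\}$. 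The one fact I need about the frame is that the connection form $\omega^{b}_{a}$, defined by $\nabla s_{a}=\omega^{b}_{a}s_{b}$, is of type $(1,0)$: its $(0,1)$-part is $\nabla^{0,1}s_{a}=0$. This is precisely the point where the $(1,1)$-curvature hypothesis enters.

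Next I would compute the horizontal lift in this chart. Parallel transporting the basis $\{s_{i}\}$ of $P=\langle s_{1},\dots,s_{n}\rangle$ along a curve with initial velocity $v$ gives, at $B=0$, the relation $\tfrac{d}{dt}B^{\mu}_{i}|_{0}=-\omega^{\mu}_{i}(v)$, so that $v^{\nabla}=\widehat{V}+\widehat{\overline{V}}-\sum\omega^{\mu}_{i}(v)\,\partial_{B^{\mu}_{i}}-\sum\overline{\omega^{\mu}_{i}(v)}\,\partial_{\overline{B}^{\mu}_{i}}$, where $v=V+\overline{V}$ with $V$ of type $(1,0)$ and $\widehat{\,\cdot\,}$ denotes the coordinate lift to the base directions. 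Since $\omega$ kills $(0,1)$-vectors one has $\omega^{\mu}_{i}(v)=\omega^{\mu}_{i}(V)$ and $\omega^{\mu}_{i}(Iv)=i\,\omega^{\mu}_{i}(V)$, and a short computation then yields
\[\mathcal{J}_{hol}(v^{\nabla})=i\widehat{V}-i\widehat{\overline{V}}-i\sum\omega^{\mu}_{i}(V)\,\partial_{B^{\mu}_{i}}+i\sum\overline{\omega^{\mu}_{i}(V)}\,\partial_{\overline{B}^{\mu}_{i}},\]
which is exactly the horizontal lift $(Iv)^{\nabla}$ computed from the same formula applied to $Iv=iV-i\overline{V}$. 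Hence $\mathcal{J}_{hol}(v^{\nabla})=(Iv)^{\nabla}=\mathcal{J}^{(\nabla,I)}(v^{\nabla})$, and combined with the vertical case this gives $\mathcal{J}_{hol}=\mathcal{J}^{(\nabla,I)}$.

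I expect the main obstacle to be conceptual rather than computational: $H^{\nabla}Gr_{n}$ is defined through parallel transport by the \emph{full} connection $\nabla$, while the graph chart, and hence $\mathcal{J}_{hol}$, is built from the $\nabla^{0,1}$-parallel holomorphic frame, so it is not clear a priori that $\mathcal{J}_{hol}$ even preserves $H^{\nabla}Gr_{n}$. What dissolves this difficulty is the type-$(1,0)$ decomposition of the connection form, after which the interplay of $\omega^{\mu}_{i}(Iv)=i\,\omega^{\mu}_{i}(V)$ with $\mathcal{J}_{hol}=i$ on the $\partial_{B^{\mu}_{i}}$ directions makes the vertical corrections of $v^{\nabla}$ transform in exactly the right way. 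One could alternatively argue invariantly by matching the $(0,1)$-subspaces of the two structures, but the frame computation above appears to be the most transparent route.
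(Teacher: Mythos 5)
Your proposal is correct and follows essentially the same route as the paper: both arguments choose a local holomorphic frame (available because $\nabla^{0,1}$ is a $\overline{\partial}$-operator), note that agreement on the verticals is immediate, and on the horizontals compute the lift via parallel transport in that frame, where the whole point is that the connection form is of type $(1,0)$ (equivalently $\nabla_{Iv}f=i\nabla_{v}f$ for holomorphic $f$). The only difference is one of packaging: the paper proves the statement for the total space of a holomorphic vector bundle and declares the Grassmannian case a straightforward generalization, whereas you carry out that generalization explicitly in the graph chart.
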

 
 We will prove the above proposition for a more general setup in the next section; here we will use it to prove Theorem \ref{ThmGEM} by showing that the map $\psi: (\mathcal{C}, \mathcal{J}^{(\nabla,I)}) \longrightarrow (Gr_{n}(E_{\mathbb{C}}),\mathcal{J}^{(\nabla,I)})$, which is given by $\psi(J)=E^{0,1}_{J}$, is holomorphic. Recalling the splitting of $T\mathcal{C}= V\mathcal{C} \oplus H^{\nabla}\mathcal{C}$, as given in Lemma \ref{LemCSPLIT}, let us first consider the following: 

\begin{lemma}
\label{LemPH}
The map $\psi_{*}$ preserves horizontals: $\psi_{*}:H^{\nabla}_{J}\mathcal{C} \longrightarrow H^{\nabla}_{E^{0,1}_{J}}Gr_{n}$. In fact, $\psi_{*}(v^{\nabla})=v^{(\nabla,Gr)}$, where $v^{\nabla}$ and $v^{(\nabla,Gr)}$ are the appropriate horizontal lifts of $v \in T_{x}M$.  
\end{lemma}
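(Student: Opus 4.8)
The plan is to prove the statement directly from the two definitions of horizontal distributions by showing that $\psi$ intertwines the parallel-transport operations that define them; the identity of the horizontal lifts then drops out by differentiating a curve. Fix $J \in \mathcal{C}$ with $\pi_{\mathcal{C}}(J)=x$, fix $v \in T_x M$, and choose a curve $\gamma$ with $\gamma(0)=x$ and $\dot\gamma(0)=v$. Let $\tau_t$ denote the parallel transport of $E_{\mathbb{C}}$ along $\gamma$ (the $\mathbb{C}$-linear extension of parallel transport of the real bundle $E$ by $\nabla$), let $J(t)$ be the parallel translate of the endomorphism $J$, so that $v^{\nabla}=\frac{d}{dt}J(t)|_{t=0}$, and let $P(t)$ be the parallel translate of the $n$-plane $P=E^{0,1}_J$, so that $v^{(\nabla,Gr)}=\frac{d}{dt}P(t)|_{t=0}$.

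First I would establish the compatibility of the two induced parallel transports: since the connection on $EndE$ is induced from $\nabla$, parallel translation of an endomorphism is conjugation by the parallel transport of vectors, i.e.\ $J(t)=\tau_t \circ J \circ \tau_t^{-1}$ as endomorphisms of $(E_{\mathbb{C}})_{\gamma(t)}$. This is immediate from $\nabla J(t)=0$ along $\gamma$ together with $\nabla \tau_t=0$ on vectors.

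Next I would use this to compare the images under $\psi$. For $e \in E^{0,1}_J$ we have $Je=-ie$, and therefore $J(t)(\tau_t e)=\tau_t(Je)=-i\,\tau_t e$; hence $\tau_t(E^{0,1}_J)\subseteq E^{0,1}_{J(t)}$, and equality follows by a dimension count. But $\tau_t(E^{0,1}_J)$ is, by definition, precisely the parallel translate $P(t)$ of the plane $P=E^{0,1}_J=\psi(J)$ in $Gr_n(E_{\mathbb{C}})$, while $E^{0,1}_{J(t)}=\psi(J(t))$. Thus $\psi(J(t))=P(t)$ for all relevant $t$. Differentiating this identity at $t=0$ gives $\psi_*(v^{\nabla})=\frac{d}{dt}\psi(J(t))|_{t=0}=\frac{d}{dt}P(t)|_{t=0}=v^{(\nabla,Gr)}$, which simultaneously shows that $\psi_*$ maps $H^{\nabla}_J\mathcal{C}$ into $H^{\nabla}_{E^{0,1}_J}Gr_n$ and identifies the horizontal lifts.

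The only point requiring genuine care — and the closest thing to an obstacle — is the compatibility formula $J(t)=\tau_t J \tau_t^{-1}$ and, with it, the claim that parallel transport by the real connection $\nabla$ commutes with passing to the $(-i)$-eigenspace of $J$. Everything rests on $\nabla$ being a real connection, so that its $\mathbb{C}$-linear extension respects the decomposition $E_{\mathbb{C}}=E^{1,0}_J\oplus E^{0,1}_J$ under transport; once this is verified, the remainder is a one-line differentiation and no curvature hypothesis is needed for this particular lemma.
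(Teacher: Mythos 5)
Your proof is correct and follows essentially the same route as the paper: both arguments show that parallel transport intertwines $J$ with $J(t)$, deduce that the $(-i)$-eigenspace $E^{0,1}_J$ is carried by parallel transport onto $E^{0,1}_{J(t)}$, and then differentiate the resulting identity $\psi(J(t))=P(t)$ at $t=0$. The only cosmetic difference is that you phrase the compatibility as $J(t)=\tau_t J\tau_t^{-1}$ while the paper phrases it as "$J(t)e(t)$ is parallel"; these are the same observation.
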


\begin{proof}
Let $\gamma(t)$ be a curve in $M$ such that $\gamma(0)=x$ and $\gamma'(0)=v$. Also let $J(t)$ be the parallel translate of $J \in \mathcal{C}(E_{x})$ along $\gamma$ (by using $\nabla$), so that
 \[ \psi_{*}(v^{\nabla})= \frac{d}{dt}|_{t=0}\psi(J(t)).\]
 
 The claim then is that $\psi(J(t))$, which is by definition $E^{0,1}_{J(t)}$, equals $E^{0,1}_{J}(t)$, the parallel translate of $E^{0,1}_{J}$ along $\gamma$. To show this just note that if $e(t)$ is the parallel translate of $e \in E^{0,1}_{J}$ then $J(t)e(t)$ is also parallel and since $Je=-ie$, it follows that $J(t)e(t)=-ie(t)$ for all relevant $t \in \mathbb{R}$. Hence $\frac{d}{dt}|_{t=0}\psi(J(t))=\frac{d}{dt}|_{t=0}E^{0,1}_{J}(t)=v^{(\nabla,Gr)}.$
\end{proof}

Assuming Proposition \ref{PropCSG}, we can now prove that $\psi$ is holomorphic:

\begin{proof}[Proof of Theorem \ref{ThmGEM}]
Consider $\psi_{*}: T_{J} \mathcal{C} \longrightarrow T_{E^{0,1}_{J}}Gr_{n}$. By Proposition \ref{PropCSG}, we need to show that $\psi_{*}\mathcal{J}^{(\nabla,I)}= \mathcal{J}^{(\nabla,I)}\psi_{*}$.

A) If $A \in V_{J}\mathcal{C}$, the vertical tangent space to $J$, then it follows from Proposition \ref{PropVECEM} that \[ \psi_{*}(JA)=\mathcal{J}^{(\nabla,I)}\psi_{*}(A),
\]
so that $\psi_{*}$ is holomorphic in the vertical directions. 

B) As for the horizontal directions, let $v^{\nabla} \in H^{\nabla}_{J}\mathcal{C}$ be the horizontal lift of $v \in T_{x}M$. Then $\psi_{*}(\mathcal{J}^{(\nabla,I)} v^{\nabla})= \psi_{*}((Iv)^{\nabla})$, which by Lemma \ref{LemPH} equals $(Iv)^{(\nabla,Gr)}$. This in turn equals $\mathcal{J}^{(\nabla,I)}v^{(\nabla,Gr)}=\mathcal{J}^{(\nabla,I)}\psi_{*}(v^{\nabla})$.
\end{proof}
\subsection{Proof of Proposition \ref{PropCSG}} 
In this section, we will prove a slightly more general version of Proposition \ref{PropCSG}; this will then complete the proof of Theorem \ref{ThmGEM}. To begin, we will find it useful to describe the complex structures on holomorphic vector bundles:

Let $\pi_{F}: F \longrightarrow (M,I)$ be a complex vector bundle that is equipped with a $\overline{\partial}$-operator, $\overline{\partial}$, and let $\nabla$ be a complex connection on $F$ such that $\nabla^{0,1}=\overline{\partial}$. Below we will let $\mathcal{J}^{(\nabla,I)}$ be the almost complex structure on either $F$ or $Gr_{k}(F)$  that is defined in a by now familiar way: use $\nabla$ to split the appropriate tangent bundle into vertical and horizontal distributions, and define $\mathcal{J}^{(\nabla,I)}$ to be the direct sum of the given fiberwise complex structure on the verticals and the lift of $I$ on the horizontals. 

\begin{prop}Let $\nabla$ be a complex connection on $F$ such that $\nabla^{0,1}= \overline{\partial}$. Then the associated complex structure on $F$ is $\mathcal{J}^{(\nabla,I)}$.
\end{prop}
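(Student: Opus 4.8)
The plan is to prove the equality of the two almost complex structures by comparing their $(1,0)$-tangent spaces in a local holomorphic trivialization of the total space of $F$. Since $\overline{\partial}$ is a genuine $\overline{\partial}$-operator (so that $\overline{\partial}^{2}=0$), the total space of $F$ carries an honest complex manifold structure and, by the Koszul--Malgrange theorem, there is a local holomorphic frame $\{s_{\alpha}\}$ for $F$ about any point, i.e.\ $\overline{\partial}s_{\alpha}=0$. Combined with holomorphic coordinates $\{z^{i}\}$ on $(M,I)$, the fiber coordinates $w^{\alpha}$ relative to $\{s_{\alpha}\}$ yield holomorphic coordinates $(z^{i},w^{\alpha})$ on the total space, and by construction the standard complex structure $J_{F}$ has $(1,0)$-tangent space spanned by $\{\partial_{z^{i}},\partial_{w^{\alpha}}\}$. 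My goal is then to show that $\mathcal{J}^{(\nabla,I)}$ has exactly the same $(1,0)$-space.

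The key observation, and where the hypothesis $\nabla^{0,1}=\overline{\partial}$ is used, is that in such a holomorphic frame the connection $1$-forms $\omega^{\beta}_{\alpha}$, defined by $\nabla s_{\alpha}=\omega^{\beta}_{\alpha}\otimes s_{\beta}$, are of type $(1,0)$. Indeed $\nabla^{0,1}s_{\alpha}=(\omega^{\beta}_{\alpha})^{0,1}s_{\beta}$, and since $\nabla^{0,1}s_{\alpha}=\overline{\partial}s_{\alpha}=0$ the $(0,1)$-part of each $\omega^{\beta}_{\alpha}$ vanishes. First I would use this to compute the horizontal lifts: parallel transport of $e=w^{\alpha}s_{\alpha}$ along $\gamma$ is governed by $\dot{w}^{\beta}=-w^{\alpha}\omega^{\beta}_{\alpha}(\dot{\gamma})$, so the horizontal lift of a base vector $v$ is $v^{\nabla}=v-w^{\alpha}\omega^{\beta}_{\alpha}(v)\,\partial_{w^{\beta}}$. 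Because $\omega$ is $(1,0)$, this gives $(\partial_{\bar{z}^{i}})^{\nabla}=\partial_{\bar{z}^{i}}$ and, crucially, $(\partial_{z^{i}})^{\nabla}=\partial_{z^{i}}-w^{\alpha}\omega^{\beta}_{\alpha}(\partial_{z^{i}})\,\partial_{w^{\beta}}$, which is a $\mathbb{C}$-linear combination of $\partial_{z^{i}}$ and $\partial_{w^{\beta}}$ and is therefore a $J_{F}$-holomorphic vector.

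Next I would verify that these same vectors are $(1,0)$ for $\mathcal{J}^{(\nabla,I)}$. On the vertical distribution the two structures agree by definition, so each $\partial_{w^{\alpha}}$ is $\mathcal{J}^{(\nabla,I)}$-holomorphic. On the horizontal distribution the definition gives $\mathcal{J}^{(\nabla,I)}v^{\nabla}=(Iv)^{\nabla}$; since $I\partial_{z^{i}}=i\,\partial_{z^{i}}$ and the horizontal lift is $\mathbb{C}$-linear, we obtain $\mathcal{J}^{(\nabla,I)}(\partial_{z^{i}})^{\nabla}=i\,(\partial_{z^{i}})^{\nabla}$, so $(\partial_{z^{i}})^{\nabla}$ is $\mathcal{J}^{(\nabla,I)}$-holomorphic as well. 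Hence the span $\{(\partial_{z^{i}})^{\nabla},\partial_{w^{\alpha}}\}=\{\partial_{z^{i}},\partial_{w^{\alpha}}\}$ is the $+i$-eigenspace of both $J_{F}$ and $\mathcal{J}^{(\nabla,I)}$, which forces the two structures to coincide.

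The step I expect to require the most care is the identification that the connection forms are $(1,0)$ together with its consequence that horizontal lifts of holomorphic base directions land in the holomorphic tangent space of the total space; this is precisely the conceptual heart of the argument and the only place the assumption $\nabla^{0,1}=\overline{\partial}$ genuinely enters. The same scheme then applies verbatim to $Gr_{k}(F)$, using the induced holomorphic charts on the Grassmannian bundle and parallel transport of $k$-planes, which recovers Proposition \ref{PropCSG} and thereby completes the proof of Theorem \ref{ThmGEM}.
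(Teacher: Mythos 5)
Your proof is correct and follows essentially the same route as the paper: both pass to a local holomorphic frame, use the fact that $\nabla^{0,1}$ of the frame vanishes (your observation that the connection forms are $(1,0)$ is exactly the paper's identity $\nabla_{Iv}f_{j}=i\nabla_{v}f_{j}$), and deduce that horizontal lifts of $(1,0)$ base directions are $(1,0)$ for the standard complex structure on the total space. The only difference is presentational: you compare $+i$-eigenspaces via the explicit coordinate formula for the horizontal lift, whereas the paper verifies directly that the trivialization $a_{i}f_{i}|_{x}\mapsto(x,a_{i}w_{i})$ intertwines $\mathcal{J}^{(\nabla,I)}$ with the product complex structure by differentiating parallel transport.
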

\begin{proof}
Let $\{f_{i}\}$ $(1 \leq i \leq rankF)$ be a holomorphic frame for $F$ over $U \subset M$ and let $W$ be a complex vector space with basis $\{w_{i}\}$.
To prove the proposition, we need to show that the map
\begin{align*}  
\sigma: (F|_{U},\mathcal{J}^{(\nabla,I)}&) \longrightarrow U \times W\\
a_{i}f_{i}|_{x} &\longrightarrow (x, a_{i}w_{i})
\end{align*}
is pseudoholomorphic. 
For this, consider $\sigma_{*}:T_{f}F \longrightarrow T_{\sigma(f)}(U \times W)$, where $\pi_{F}(f)=x$.

1) Since $\sigma|_{x}$ is a complex linear isomorphism from $F|_{x}$ to $W$, $\sigma$ is  holomorphic in the vertical directions, i.e., $\sigma_{*}(if')=i\sigma_{*}(f')$, where $f' \in V_{f}F=F|_{x}.$

2) As for the horizontal directions, we need to show that $\sigma_{*}(\mathcal{J}^{(\nabla,I)}v^{\nabla})= \mathcal{I}\sigma_{*}(v^{\nabla})$, where $v^{\nabla}$ is the horizontal lift of $v\in T_{x}M$ to $H_{f}^{\nabla}F \subset T_{f}F$  and $\mathcal{I}$ is the complex structure on $U \times W$.
 Let us first consider,
 \begin{align*}
 \sigma_{*}(\mathcal{J}^{(\nabla,I)}v^{\nabla}) &= \sigma_{*}((Iv)^{\nabla})\\ & = \frac{d}{dt}|_{t=0}\sigma(f(t)),
  \end{align*}
 where $f(t)$ is the parallel translate of $f$ along a curve $\gamma: \mathbb{R} \longrightarrow M$ that satisfies $\gamma(0)=x$ and $\gamma'(0)=Iv$. If we let $f(t)= a_{j}(t)f_{j}|_{\gamma(t)}$ then the above equals
 \[(Iv, \frac{da_{j}(t)}{dt}|_{t=0}w_{j}).\]
Similarly,
      $\sigma_{*}(v^{\nabla}) =(v, \frac{d\tilde{a}_{j}(t)}{dt}|_{t=0}w_{j}),$ where $\tilde{f}(t)= \tilde{a}_{j}(t)f_{j}|_{\tilde{\gamma(t)}}$ is the parallel translate of $f$ along a curve $\tilde{\gamma}: \mathbb{R} \longrightarrow M$ that satisfies $\tilde{\gamma}(0)=x$, $\tilde{\gamma}'(0)=v$. 
      Now since $\mathcal{I}\sigma_{*}(v^{\nabla})=(Iv, i\frac{d\tilde{a}_{j}(t)}{dt}|_{t=0}w_{j}),$ $\sigma$ is pseudoholomorphic if and only if $\frac{da_{j}(t)}{dt}|_{t=0}= i\frac{d\tilde{a}_{j}(t)}{dt}|_{t=0}.$
      
To show this equality, note that the condition $\tilde{\gamma}^{*}{\nabla}\tilde{f(t)}=0$ together with $ a_{j}:=\tilde{a}_{j}(0)=a_{j}(0)$ imply that $i\frac{d\tilde{a}_{j}(t)}{dt}|_{t=0}f_{j}= -ia_{j}\nabla_{v}f_{j}$. This then equals $-a_{j}\nabla_{Iv}f_{j}$ because $\nabla^{0,1}f_{j}=0$, which in turn equals $\frac{da_{j}(t)}{dt}|_{t=0}f_{j}$ since $\gamma^{*}{\nabla}{f(t)}=0$. Hence $\sigma$ is pseudoholomorphic.
\end{proof}

As for the Grassmannians, we have: 
\begin{prop}
The complex structure on $Gr_{k}(F)$ that is induced by $(F,\overline{\partial})$ is $\mathcal{J}^{(\nabla,I)}$.
\end{prop}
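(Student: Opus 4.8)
The plan is to mirror the proof of the preceding proposition, which established the corresponding statement for the total space of $F$, by reducing everything to a local holomorphic frame and then working in the graph chart on a single Grassmannian fiber. First I would fix a local holomorphic frame $\{f_i\}$ for $F$ over an open set $U \subset M$; by the proposition just proved, the induced trivialization $F|_U \cong U \times W$, where $W$ is the fixed complex vector space spanned by the $f_i$, is a biholomorphism intertwining $\mathcal{J}^{(\nabla,I)}$ with the product complex structure. This frame induces a fiberwise trivialization $\Sigma: Gr_k(F)|_U \longrightarrow U \times Gr_k(W)$, and the holomorphic structure on $Gr_k(F)$ coming from $(F,\overline{\partial})$ is, by definition, the one for which $\Sigma$ is biholomorphic onto the product (equipped with its standard complex structure). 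Hence it suffices to show that $\Sigma$ is holomorphic with respect to the $\mathcal{J}^{(\nabla,I)}$ defined on $Gr_k(F)$, i.e. $\Sigma_* \mathcal{J}^{(\nabla,I)} = \mathcal{I}\,\Sigma_*$, where $\mathcal{I}$ denotes the product complex structure.

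As in the vector-bundle case I would split into vertical and horizontal directions. Along the verticals, $\Sigma$ restricts on each fiber to the biholomorphism $Gr_k(F|_x) \to Gr_k(W)$ induced by the $\mathbb{C}$-linear isomorphism $F|_x \cong W$; since this carries the standard fiberwise complex structure $\mathcal{J}^{V}$ to the standard structure on $Gr_k(W)$, $\Sigma$ is holomorphic in the vertical directions. The substance lies in the horizontal directions, where I must verify $\Sigma_*(\mathcal{J}^{(\nabla,I)} v^{\nabla}) = \mathcal{I}\,\Sigma_*(v^{\nabla})$ for $v^{\nabla}$ the horizontal lift of $v \in T_x M$. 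Writing $\mathcal{J}^{(\nabla,I)} v^{\nabla} = (Iv)^{\nabla}$, I would compute both sides by parallel transporting a plane $P = \langle e_1, \dots, e_k \rangle$ along curves $\gamma$ and $\tilde{\gamma}$ with $\gamma'(0) = Iv$ and $\tilde{\gamma}'(0) = v$, expanding $e_i(t) = \sum_j c_{ij}(t)\, f_j|_{\gamma(t)}$ in the frame, and reading off the resulting tangent vectors to $Gr_k(W)$ in the graph chart $\mathrm{Hom}(Q_0, W/Q_0)$ about $Q_0 = \Sigma(P)$.

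The key computation is the parallel-transport relation $\dot{c}_{ij}(0)\, f_j = -c_{ij}(0)\, \nabla_{\gamma'(0)} f_j$, combined with the identity $\nabla_{Iv} f_j = i\, \nabla_v f_j$, which holds because the frame is holomorphic ($\nabla^{0,1} f_j = 0$) and $(Iv)^{1,0} = i\, v^{1,0}$. This forces the coefficient derivatives for the $Iv$-curve to be exactly $i$ times those for the $v$-curve. Since the map sending a moving basis $\{c_i(t)\}$ to its tangent vector in $\mathrm{Hom}(Q_0, W/Q_0)$, namely $c_i(0) \mapsto \dot{c}_i(0) \bmod Q_0$, is $\mathbb{C}$-linear in the velocities $\dot{c}_i$, multiplying all derivatives by $i$ multiplies the tangent vector by $i$; and in the holomorphic graph chart multiplication by $i$ is precisely the action of $\mathcal{I}$. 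Therefore $\Sigma_*((Iv)^{\nabla}) = \mathcal{I}\,\Sigma_*(v^{\nabla})$, which is what we need.

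The main obstacle I anticipate is bookkeeping rather than conceptual: one must confirm that the identification of $\frac{d}{dt}|_{t=0} P(t)$ with an element of $\mathrm{Hom}(Q_0, W/Q_0)$ is genuinely $\mathbb{C}$-linear in the velocities of a spanning set and is independent of the chosen basis of $P$, so that the scalar $i$ indeed passes through to the ambient complex structure on the Grassmannian fiber. Once this linearity is pinned down, holomorphicity along the horizontals follows from the single relation $\nabla_{Iv} f_j = i\, \nabla_v f_j$, exactly as in the preceding vector-bundle proposition, and combining it with the vertical case completes the proof.
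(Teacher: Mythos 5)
Your proposal is correct and is essentially the paper's own argument: the paper simply states that the Grassmannian case is "a straightforward generalization of the previous proof," and your write-up carries out exactly that generalization — local holomorphic frame, induced trivialization $Gr_k(F)|_U \cong U \times Gr_k(W)$, vertical holomorphy from the fiberwise linear isomorphism, and horizontal holomorphy from the relation $\nabla_{Iv}f_j = i\,\nabla_v f_j$ passed through the graph chart. The only content you add beyond the paper is the (correct) care about $\mathbb{C}$-linearity and basis-independence of the identification $\frac{d}{dt}\big|_{t=0}P(t) \mapsto \mathrm{Hom}(Q_0, W/Q_0)$, which is the bookkeeping the paper leaves implicit.
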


The proof of the above proposition and hence of Proposition \ref{PropCSG} is just a straightforward generalization of the previous proof. This then completes the proof of Theorem \ref{ThmGEM} as well.

\subsection{Corollaries of the Embedding}
\label{SecCE}
We will now demonstrate some corollaries of the holomorphic embedding $ \psi: (\mathcal{C}, \mathcal{J}^{(\nabla,I)}) \longrightarrow Gr_{n}(E_{\mathbb{C}}),$ as given in Theorem \ref{ThmGEM}. In particular, we will address certain issues regarding the holomorphic structure of twistor spaces that were raised in Section \ref{SecDOP}.  
 
Let $E$ and $E'$ be two real vector bundles of even rank that are fibered over $(M,I)$ and that are respectively equipped with connections $\nabla$ and $\nabla'$ of (1,1) curvature. 
\begin{prop}
\label{PropISG}
Let $A:E \longrightarrow E'$ be a bundle map such that its $\mathbb{C}$-extension, 
$A: (E_{\mathbb{C}},\nabla^{0,1}) \longrightarrow (E'_{\mathbb{C}},\nabla'^{0,1})$ is an isomorphism of holomorphic vector bundles. Then this map induces a fiber preserving biholomorphism between $(\mathcal{C}(E),\mathcal{J}^{(\nabla,I)})$ and $(\mathcal{C}(E'),\mathcal{J}^{(\nabla',I)})$.
\end{prop}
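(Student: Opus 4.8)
The plan is to reduce the statement to the elementary functoriality of the Grassmannian construction, using the holomorphic embeddings of Theorem \ref{ThmGEM}. Since the $\mathbb{C}$-extension $A\colon E_{\mathbb{C}} \to E'_{\mathbb{C}}$ is an isomorphism, $A$ is in particular an isomorphism of the underlying real bundles, so I can define a fiber-preserving map $\tilde{A}\colon \mathcal{C}(E) \to \mathcal{C}(E')$ by $\tilde{A}(J) = AJA^{-1}$; this lands in $\mathcal{C}(E')$ because $(AJA^{-1})^{2} = AJ^{2}A^{-1} = -1$, it covers $\mathrm{id}_{M}$ since $A$ does, and it is visibly a diffeomorphism with inverse $J' \mapsto A^{-1}J'A$. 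The claim to be proven is therefore that $\tilde{A}$ is holomorphic; as its inverse is of the same form, holomorphicity will immediately upgrade to biholomorphicity.

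First I would record the induced map on Grassmannian bundles, $A_{*}\colon Gr_{n}(E_{\mathbb{C}}) \to Gr_{n}(E'_{\mathbb{C}})$, defined fiberwise by $A_{*}(P) = A(P)$. Because $A$ is an isomorphism of the holomorphic bundles $(E_{\mathbb{C}}, \nabla^{0,1})$ and $(E'_{\mathbb{C}}, \nabla'^{0,1})$, and because, by Proposition \ref{PropCSG}, the complex structures on $Gr_{n}(E_{\mathbb{C}})$ and $Gr_{n}(E'_{\mathbb{C}})$ are precisely those induced by these $\overline{\partial}$-operators, the standard functoriality of the Grassmannian construction in the holomorphic category shows that $A_{*}$ is a biholomorphism.

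The heart of the argument is then to verify that the square commutes, namely $\psi' \circ \tilde{A} = A_{*} \circ \psi$, where $\psi$ and $\psi'$ are the embeddings of Theorem \ref{ThmGEM}. This is a pointwise eigenspace computation: if $v \in E^{0,1}_{J}$, i.e.\ $Jv = -iv$ for the $\mathbb{C}$-linear extension of $J$, then $(AJA^{-1})(Av) = A(Jv) = -i(Av)$, so $A(E^{0,1}_{J}) \subseteq {E'}^{0,1}_{\tilde{A}(J)}$, with equality following by comparing dimensions. Hence $A_{*}(\psi(J)) = A(E^{0,1}_{J}) = {E'}^{0,1}_{\tilde{A}(J)} = \psi'(\tilde{A}(J))$.

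Finally I would check that $A_{*}$ carries the image of $\psi$ onto that of $\psi'$, so that the identity $\tilde{A} = (\psi')^{-1} \circ A_{*} \circ \psi$ makes sense and exhibits $\tilde{A}$ as a composite of biholomorphisms (restricted to the relevant open sub-bundles). This uses the compatibility of $A$ with the real structures: since $A$ is the complexification of a real bundle map, $\overline{A(P)} = A(\overline{P})$, whence $A(P) \oplus \overline{A(P)} = A(P \oplus \overline{P})$; applying the image description of Proposition \ref{PropVECEM} fiberwise, $P \oplus \overline{P} = E_{\mathbb{C}}$ if and only if $A(P) \oplus \overline{A(P)} = E'_{\mathbb{C}}$, so $A_{*}$ matches the open sub-bundles that are the images of $\psi$ and $\psi'$. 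I expect the only points requiring genuine care to be this compatibility with conjugation together with the invocation of functoriality in the holomorphic category; the remaining content is the routine eigenspace identity displayed above.
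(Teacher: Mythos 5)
Your proposal is correct and follows essentially the same route as the paper: both arguments pass the problem through the Grassmannian embedding of Theorem \ref{ThmGEM}, use the fact that $A$ induces a biholomorphism of $Gr_{n}(E_{\mathbb{C}})$ onto $Gr_{n}(E'_{\mathbb{C}})$ preserving the open locus $\{P \mid P \oplus \overline{P} = E_{\mathbb{C}}\}$ because $A$ is real, and then identify that locus with the twistor space. You simply make explicit two points the paper leaves implicit --- the formula $\tilde{A}(J) = AJA^{-1}$ and the commutativity $\psi' \circ \tilde{A} = A_{*} \circ \psi$ via the eigenspace computation --- which is a welcome amplification rather than a different argument.
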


\begin{proof}
The isomorphism $A: (E_{\mathbb{C}},\nabla^{0,1}) \longrightarrow (E'_{\mathbb{C}},\nabla'^{0,1})$ induces the biholomorphism $\tilde{A}: Gr_{n}(E_{\mathbb{C}}) \longrightarrow Gr_{n}(E'_{\mathbb{C}})$ that is defined by $\tilde{A} (<e_{1},...,e_{n}>_{\mathbb{C}})= <Ae_{1},...,Ae_{n}>_{\mathbb{C}}$. Since $A$ is a real map, $\tilde{A}$ restricts to a biholomorphism between the set $\{P \in Gr_{n}(E_{\mathbb{C}}) | P \oplus \overline{P}=E_{\mathbb{C}}|_{\pi_{Gr(P)}}\}$ in $Gr_{n}(E_{\mathbb{C}})$ and the corresponding one in $Gr_{n}(E'_{\mathbb{C}})$. The proposition then follows from Theorem \ref{ThmGEM} and Proposition \ref{PropVECEM}, which show that these sets are respectively biholomorphic to $(\mathcal{C}(E),\mathcal{J}^{(\nabla,I)})$ and $(\mathcal{C}(E'),\mathcal{J}^{(\nabla',I)})$.
\end{proof}

Now suppose that $E$ and $E'$ are also equipped with respective fiberwise metrics $g$ and $g'$ and that the above connections preserve the appropriate metrics. If we $\mathbb{C}$-bilinearly extend $g$ and $g'$ to $E_{\mathbb{C}}$ and $E'_{\mathbb{C}}$, we then have  

\begin{prop}
\label{PropIS}
Let $A:(E_{\mathbb{C}},\nabla^{0,1}) \longrightarrow (E'_{\mathbb{C}},\nabla'^{0,1})$ be an isomorphism of holomorphic vector bundles that is orthogonal with respect to $g$ and $g'$. Then $A$ induces a fiber preserving biholomorphism between $(\mathcal{T}(E,g),\mathcal{J}^{(\nabla,I)})$ and $(\mathcal{T}(E',g'),\mathcal{J}^{(\nabla',I)})$.
\end{prop}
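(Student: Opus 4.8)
The plan is to reduce Proposition \ref{PropIS} to Proposition \ref{PropISG}, which has already established the biholomorphism at the level of the full twistor spaces $\mathcal{C}(E)$ and $\mathcal{C}(E')$. First I would invoke Proposition \ref{PropISG}: since $A:(E_{\mathbb{C}},\nabla^{0,1}) \longrightarrow (E'_{\mathbb{C}},\nabla'^{0,1})$ is an isomorphism of holomorphic bundles, it already induces a fiber-preserving biholomorphism $\hat{A}: (\mathcal{C}(E),\mathcal{J}^{(\nabla,I)}) \longrightarrow (\mathcal{C}(E'),\mathcal{J}^{(\nabla',I)})$, realized through the induced biholomorphism $\tilde{A}: Gr_{n}(E_{\mathbb{C}}) \longrightarrow Gr_{n}(E'_{\mathbb{C}})$ of the Grassmannian bundles together with the embeddings $\psi$ of Theorem \ref{ThmGEM}. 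The remaining task is then purely to show that $\hat{A}$ carries the complex submanifold $\mathcal{T}(E,g)$ onto $\mathcal{T}(E',g')$, and since a biholomorphism restricted to a complex submanifold mapped onto a complex submanifold is automatically a biholomorphism, this suffices.

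The key step is therefore to track the images under $\tilde{A}$. By Proposition \ref{PropGEM}, the embedding $\psi$ identifies $(\mathcal{T}(E,g),\mathcal{J}^{(\nabla,I)})$ with the space of maximal isotropics $MI(E_{\mathbb{C}})$ sitting inside $Gr_{n}(E_{\mathbb{C}})$, and likewise for $E'$. So it is enough to verify that $\tilde{A}$ maps $MI(E_{\mathbb{C}})$ into $MI(E'_{\mathbb{C}})$. This is exactly where the orthogonality hypothesis enters: if $P = <e_{1},\dots,e_{n}>_{\mathbb{C}}$ is isotropic for the $\mathbb{C}$-bilinear extension of $g$, meaning $g(e_{i},e_{j})=0$ for all $i,j$, then because $A$ is orthogonal we have $g'(Ae_{i},Ae_{j})=g(e_{i},e_{j})=0$, so $\tilde{A}(P)=<Ae_{1},\dots,Ae_{n}>_{\mathbb{C}}$ is isotropic for $g'$. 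As $\tilde{A}$ is a fiberwise linear isomorphism, it sends $n$-dimensional isotropics to $n$-dimensional isotropics, hence $MI(E_{\mathbb{C}})$ into $MI(E'_{\mathbb{C}})$. Applying the same argument to $A^{-1}$, which is also orthogonal, gives the reverse inclusion, so $\tilde{A}$ restricts to a bijection $MI(E_{\mathbb{C}}) \longrightarrow MI(E'_{\mathbb{C}})$.

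Putting the pieces together, the restriction of $\hat{A}$ to $\mathcal{T}(E,g)$ corresponds under the identifications $\psi$ to the restriction of $\tilde{A}$ to $MI(E_{\mathbb{C}})$, which is a biholomorphism onto $MI(E'_{\mathbb{C}})$; transporting back through $\psi^{-1}$ on the target yields a fiber-preserving biholomorphism $(\mathcal{T}(E,g),\mathcal{J}^{(\nabla,I)}) \longrightarrow (\mathcal{T}(E',g'),\mathcal{J}^{(\nabla',I)})$. There is essentially no hard step here, since all the analytic content—that $\psi$ is holomorphic with image the maximal isotropics, and that $\tilde{A}$ is a biholomorphism—was already supplied by Theorem \ref{ThmGEM}, Proposition \ref{PropGEM}, and Proposition \ref{PropISG}. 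The only point requiring genuine (if elementary) attention is the isotropy-preservation computation above; the mildest subtlety worth a sentence is that one must use the $\mathbb{C}$-\emph{bilinear} extension of the metrics, as stated in the hypothesis, rather than a Hermitian extension, so that orthogonality of $A$ translates directly into preservation of the isotropy condition defining $MI$.
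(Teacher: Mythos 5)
Your proposal is correct and follows essentially the same route as the paper: pass to the induced biholomorphism $\tilde{A}$ of the Grassmannian bundles, use orthogonality of $A$ to see that $\tilde{A}$ preserves the spaces of maximal isotropics, and conclude via Proposition \ref{PropGEM}. The only cosmetic difference is that you phrase the reduction as restricting the biholomorphism of $\mathcal{C}(E)\to\mathcal{C}(E')$ from Proposition \ref{PropISG}, while the paper works directly with $MI(E_{\mathbb{C}})$; the substance is identical.
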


\begin{proof}
Similar to the proof of Proposition \ref{PropISG}, the isomorphism $A: (E_{\mathbb{C}},\nabla^{0,1}) \\ \longrightarrow (E'_{\mathbb{C}},\nabla'^{0,1})$ induces a biholomorphism $\tilde{A}: Gr_{n}(E_{\mathbb{C}}) \longrightarrow Gr_{n}(E'_{\mathbb{C}})$. Since $A$ is an orthogonal map, $\tilde{A}$ maps the space of maximal isotropics, $MI(E_{\mathbb{C}})$, in $Gr_{n}(E_{\mathbb{C}})$ to the one in $Gr_{n}(E'_{\mathbb{C}})$. The proposition then follows from Proposition \ref{PropGEM}, which shows that $(\mathcal{T}(E,g),\mathcal{J}^{(\nabla,I)})$ and $(\mathcal{T}(E',g'),\mathcal{J}^{(\nabla',I)})$ are respectively biholomorphic to $MI(E_{\mathbb{C}})$ and $MI(E'_{\mathbb{C}})$.
\end{proof}

In the following two sections we consider some applications of the above propositions.

\subsubsection{Cohomology Independence}
\label{SecCD}
Let $(E,g,J) \longrightarrow (M,I)$ be a holomorphic Hermitian bundle fibered over a complex manifold and let $\overline{\partial}$ be the standard $\overline{\partial}$-operator on $E^{1,0} \oplus E^{*1,0}$, where $E^{1,0}$ is the $+i$ eigenbundle of $J$. If we choose $D \in \Gamma(T^{*0,1} \otimes \wedge^{2}E^{*1,0})$ to satisfy $\overline{\partial}D=0$ then, as described in Section \ref{SecDOP}, $\nabla^{Ch(0,1)} + g^{-1}D$ is a $\overline{\partial}$-operator on $E_{\mathbb{C}}=E^{1,0} \oplus E^{0,1}$ and, for $\nabla=\nabla^{Ch}+ g^{-1}D + \overline{g^{-1}D}$, the twistor space $(\mathcal{T}(E), \mathcal{J}^{(\nabla,I)})$ is a complex manifold. 
If we now let $B \in \Gamma(\wedge^{2}E^{*1,0})$ then $\nabla^{Ch(0,1)} + g^{-1}(D+ \overline{\partial}B)$ is another $\overline{\partial}$-operator on $E_{\mathbb{C}}$ and it is natural to wonder, as in Section \ref{SecDOP}, whether the associated twistor space is biholomorphic to the previous one. In other words, does the above give a well defined mapping from the Dolbeault cohomology group $H^{0,1}(\wedge^{2}E^{*1,0})$ to the isomorphism classes of complex structures on $\mathcal{T}$? 

 By using Proposition \ref{PropIS}, we will show here that such a mapping does indeed exist. As a first step, let us consider the section of $O(E_{\mathbb{C}},g)$ $exp(g^{-1}B)$, which equals $(1+ g^{-1}B)$ since $(g^{-1}B)^{2}=0$. We then have

\begin{prop} The map
$exp(-g^{-1}B): (E_{\mathbb{C}}, \nabla^{Ch(0,1)} + g^{-1}D) \longrightarrow (E_{\mathbb{C}},\nabla^{Ch(0,1)} + g^{-1}(D+ \overline{\partial}B))$ is an isomorphism of holomorphic vector bundles. 
\end{prop}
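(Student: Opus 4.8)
The plan is to show that the bundle map $\Phi := \exp(-g^{-1}B) = 1 - g^{-1}B$ intertwines the two $\overline{\partial}$-operators. Writing $\overline{\partial}_1 = \nabla^{Ch(0,1)} + g^{-1}D$ for the operator on the source and $\overline{\partial}_2 = \nabla^{Ch(0,1)} + g^{-1}(D+\overline{\partial}B)$ for the one on the target, holomorphicity of $\Phi:(E_{\mathbb{C}},\overline{\partial}_1)\longrightarrow(E_{\mathbb{C}},\overline{\partial}_2)$ amounts to the operator identity $\overline{\partial}_2\circ\Phi = \Phi\circ\overline{\partial}_1$, equivalently $\Phi^{-1}\overline{\partial}_2\Phi = \overline{\partial}_1$. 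Since $(g^{-1}B)^2=0$, the map $\Phi$ is invertible with inverse $1+g^{-1}B$, so once the intertwining is verified $\Phi$ is automatically an isomorphism of holomorphic vector bundles, and the proof is complete.

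First I would record the nilpotent structure that makes the computation collapse. Setting $\beta = g^{-1}B$ and $\delta = g^{-1}D$, each of $\beta$, $\delta$, and $g^{-1}\overline{\partial}B$ carries $E^{1,0}$ into $E^{0,1}$ and annihilates $E^{0,1}$ (these are the lower-triangular blocks of Section \ref{SecDOP}); consequently any composition of two of them vanishes. This observation is what will let every cross term in the gauge transformation drop out.

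The one substantive computation — and the step I expect to be the main obstacle — is the identity
\[
\nabla^{Ch(0,1)}(g^{-1}B) = g^{-1}(\overline{\partial}B),
\]
where on the left $\nabla^{Ch(0,1)}$ is the $(0,1)$-part of the Chern connection induced on $End E_{\mathbb{C}}$, and on the right $\overline{\partial}$ is the Dolbeault operator on the holomorphic bundle $\wedge^{2}E^{*1,0}$. I would establish it by observing that $\nabla^{Ch}$ preserves both $g$ and $J$, so that the $\mathbb{C}$-bilinear pairing realizes $g:E^{0,1}\longrightarrow E^{*1,0}$ as a \emph{parallel} isomorphism, and that the $(0,1)$-part of $\nabla^{Ch}$ restricted to the holomorphic bundle $\wedge^{2}E^{*1,0}$ is exactly $\overline{\partial}$. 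Combining these two facts yields the displayed formula after one application of the Leibniz rule.

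With the identity in hand the rest is routine. Expanding via the Leibniz rule gives $\Phi^{-1}\overline{\partial}_2\Phi = \nabla^{Ch(0,1)} + \Phi^{-1}(\nabla^{Ch(0,1)}\Phi) + \Phi^{-1}(\delta + g^{-1}\overline{\partial}B)\Phi$. The middle term equals $(1+\beta)(-g^{-1}\overline{\partial}B) = -g^{-1}\overline{\partial}B$, the $\beta$-cross term vanishing by nilpotency, and the conjugation term equals $(1+\beta)(\delta + g^{-1}\overline{\partial}B)(1-\beta) = \delta + g^{-1}\overline{\partial}B$, again since all cross terms are products of lower-triangular blocks. These two contributions sum to $\delta$, whence $\Phi^{-1}\overline{\partial}_2\Phi = \nabla^{Ch(0,1)} + \delta = \overline{\partial}_1$, which is precisely the asserted holomorphicity of $\exp(-g^{-1}B)$.
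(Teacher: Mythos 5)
Your proposal is correct and follows essentially the same route as the paper: both verify that the gauge transformation $1-g^{-1}B$ intertwines the two $\overline{\partial}$-operators, using the nilpotency of the lower-triangular blocks and the identity $\nabla^{Ch(0,1)}(g^{-1}B)=g^{-1}\overline{\partial}B$ (the paper phrases this as the cancellation of two terms when acting on a $\overline{\partial}_1$-holomorphic section, whereas you prove the operator identity $\Phi^{-1}\overline{\partial}_2\Phi=\overline{\partial}_1$ directly). Your explicit justification of that key identity via the parallelism of $g:E^{0,1}\longrightarrow E^{*1,0}$ under the Chern connection is a welcome addition, as the paper takes it for granted.
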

\begin{proof}
Let $(\nabla^{Ch(0,1)} + g^{-1}D)v=0$ and consider 
\begin{align*}
& (\nabla^{Ch(0,1)} + g^{-1}(D+ \overline{\partial}B))(1- g^{-1}B)v \\
&= -\nabla^{Ch(0,1)}(g^{-1}Bv) +(g^{-1}\overline{\partial}B)v \\
&=-(\nabla^{Ch(0,1)}g^{-1}B)v - g^{-1}B\nabla^{Ch(0,1)}v + (g^{-1}\overline{\partial}B)v.
\end{align*}
Since the first and last terms cancel, we are left with $-g^{-1}B\nabla^{Ch(0,1)}v = -g^{-1}B(-g^{-1}Dv)=0$. This then proves the proposition.
\end{proof}

By Proposition \ref{PropIS}, we can now conclude that the twistor spaces mentioned above are biholomorphic:

\begin{prop} $exp(-g^{-1}B)$ induces a fiber preserving  biholomorphism between 
$(\mathcal{T}, \mathcal{J}^{(\nabla,I)})$ and 
$(\mathcal{T}, \mathcal{J}^{(\nabla',I)}),$ where 
$\nabla^{0,1}=\nabla^{Ch(0,1)} + g^{-1}D$ and $\nabla'^{0,1}=\nabla^{Ch(0,1)} + g^{-1}(D+ \overline{\partial}B)$.
\end{prop}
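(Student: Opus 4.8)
The plan is to deduce the statement directly from Proposition \ref{PropIS}, applied with $E'=E$, $g'=g$, and the bundle map $A=exp(-g^{-1}B)$. To invoke that proposition I must verify its two hypotheses for this $A$: that it is an isomorphism of the holomorphic vector bundles $(E_{\mathbb{C}},\nabla^{0,1})$ and $(E_{\mathbb{C}},\nabla'^{0,1})$, and that it is orthogonal with respect to the $\mathbb{C}$-bilinearly extended metric $g$.

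The first hypothesis is exactly the content of the preceding proposition, which shows that $exp(-g^{-1}B)$ intertwines the $\overline{\partial}$-operators $\nabla^{Ch(0,1)}+g^{-1}D$ and $\nabla^{Ch(0,1)}+g^{-1}(D+\overline{\partial}B)$; since $exp(-g^{-1}B)=1-g^{-1}B$ is fiberwise invertible, with inverse $1+g^{-1}B$ because $(g^{-1}B)^{2}=0$, it is a holomorphic bundle isomorphism. I would also record that $\nabla$ and $\nabla'$ are metric connections, as required by Proposition \ref{PropIS}: by the construction in Section \ref{SecDOP} each has the form $\nabla^{Ch}+\mathcal{D}_{g}$ with $\mathcal{D}_{g}$ valued in $\mathfrak{so}(E_{\mathbb{C}},g)$, hence preserves $g$.

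For the second hypothesis I would check that $exp(-g^{-1}B)$ is orthogonal, which reduces to showing that $g^{-1}B\in\mathfrak{so}(E_{\mathbb{C}},g)$; its exponential then automatically lies in $O(E_{\mathbb{C}},g)$. This is precisely the observation, already recorded in the discussion preceding the last proposition, that $exp(g^{-1}B)$ is a section of $O(E_{\mathbb{C}},g)$. Concretely, since $B\in\Gamma(\wedge^{2}E^{*1,0})$ is antisymmetric, one has $g(g^{-1}B\,X,Y)=B(X,Y)=-B(Y,X)=-g(X,g^{-1}B\,Y)$, so $g^{-1}B$ is skew. With both hypotheses verified, Proposition \ref{PropIS} yields the desired fiber-preserving biholomorphism between $(\mathcal{T},\mathcal{J}^{(\nabla,I)})$ and $(\mathcal{T},\mathcal{J}^{(\nabla',I)})$.

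The proof is thus almost entirely a matter of citing the two preceding propositions, and the only genuinely non-formal point --- the main, if minor, obstacle --- is the orthogonality verification $g^{-1}B\in\mathfrak{so}(E_{\mathbb{C}},g)$, together with the bookkeeping of transporting it through the identification $1\oplus g\colon(E_{\mathbb{C}},\frac{g}{2})\to(E^{1,0}\oplus E^{*1,0},<,>)$ used in Section \ref{SecDOP}. Once that skew-symmetry is in place, no further computation is required.
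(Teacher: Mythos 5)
Your proposal is correct and follows the paper's own route exactly: the paper likewise notes that $exp(-g^{-1}B)=1-g^{-1}B$ is a section of $O(E_{\mathbb{C}},g)$, invokes the preceding proposition for the holomorphic-isomorphism property, and then applies Proposition \ref{PropIS} with $E'=E$, $g'=g$. Your explicit verification that $g^{-1}B\in\mathfrak{so}(E_{\mathbb{C}},g)$ is a detail the paper leaves implicit, but the argument is the same.
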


As a corollary, we have

\begin{prop}
The map $[D] \longrightarrow [\mathcal{J}^{(\nabla,I)}]$, where $\nabla^{0,1}=\nabla^{Ch(0,1)} + g^{-1}D$, from the Dolbeault cohomology group $H^{0,1}(\wedge^{2}E^{*1,0})$ to the isomorphism classes of complex structures on $\mathcal{T}(E,g)$ is well defined. 
\end{prop}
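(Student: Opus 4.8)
The plan is to deduce the well-definedness directly from the preceding proposition, since the essential geometric content has already been established there. Two things must be checked: first, that the assignment $D \longmapsto [\mathcal{J}^{(\nabla,I)}]$ genuinely produces an isomorphism class of complex structures for each $\overline{\partial}$-closed representative; and second, that this class is independent of the choice of representative within a fixed Dolbeault cohomology class. The first is a bookkeeping step, and the second is where all the actual work lies — though that work has been done in the proposition immediately above.

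For the first point, I would recall from Section \ref{SecDOP} that whenever $\overline{\partial}D=0$ the operator $\nabla^{Ch(0,1)}+g^{-1}D$ squares to zero and is therefore a genuine $\overline{\partial}$-operator on $E_{\mathbb{C}}$; the associated real connection $\nabla=\nabla^{Ch}+g^{-1}D+\overline{g^{-1}D}$ then has $(1,1)$ curvature, so by the corollary of that section $\mathcal{J}^{(\nabla,I)}$ is an integrable complex structure on $\mathcal{T}(E,g)$ and its biholomorphism class $[\mathcal{J}^{(\nabla,I)}]$ is defined at the level of cocycles. For the second and substantive point, I would take two cocycles $D$ and $D'$ in the same class of $H^{0,1}(\wedge^{2}E^{*1,0})$, so that $D'=D+\overline{\partial}B$ for some $B\in\Gamma(\wedge^{2}E^{*1,0})$. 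The preceding proposition then supplies, through the gauge transformation $exp(-g^{-1}B)=1-g^{-1}B$ together with Proposition \ref{PropIS}, a fiber-preserving biholomorphism between $(\mathcal{T},\mathcal{J}^{(\nabla,I)})$ and $(\mathcal{T},\mathcal{J}^{(\nabla',I)})$, where $\nabla^{0,1}=\nabla^{Ch(0,1)}+g^{-1}D$ and $\nabla'^{0,1}=\nabla^{Ch(0,1)}+g^{-1}(D+\overline{\partial}B)$. Hence $[\mathcal{J}^{(\nabla,I)}]=[\mathcal{J}^{(\nabla',I)}]$, and the value of the map depends only on $[D]$.

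I do not expect a serious obstacle here; the argument is essentially a repackaging of the preceding proposition. The only point requiring genuine care is to confirm that the coboundary term $\overline{\partial}B$ is precisely the datum consumed by the gauge transformation $exp(-g^{-1}B)$, and that this transformation is orthogonal, so that it preserves the maximal-isotropic locus $MI(E_{\mathbb{C}})\subset Gr_{n}(E_{\mathbb{C}})$ and thereby restricts from the Grassmannian embedding of Proposition \ref{PropGEM} to an honest biholomorphism of $\mathcal{T}$. Both facts are already recorded above — the former in the identity $\nabla'^{0,1}=\nabla^{Ch(0,1)}+g^{-1}(D+\overline{\partial}B)$, and the latter in the observation that $exp(-g^{-1}B)$ is a section of $O(E_{\mathbb{C}},g)$ — so the proof reduces to invoking them in sequence and concluding that the induced map on isomorphism classes descends to the quotient $H^{0,1}(\wedge^{2}E^{*1,0})$.
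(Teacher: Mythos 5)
Your proposal is correct and follows exactly the paper's route: the paper states this proposition as an immediate corollary of the preceding one, which supplies the fiber-preserving biholomorphism induced by $exp(-g^{-1}B)$ via Proposition \ref{PropIS}, and your argument simply spells out that deduction (plus the routine check that each $\overline{\partial}$-closed representative yields a genuine complex structure). No gaps.
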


\subsubsection{Changing the Metric}
\label{SecCTM}
In the previous example we worked with a fixed metric $g$; but what if we were to choose another metric $g'$ on $E$ that is compatible with $J$---then is it true that $(\mathcal{T}(g), \mathcal{J}^{(\nabla^{Ch},I)})$ and $(\mathcal{T}(g'), \mathcal{J}^{(\nabla^{Ch'},I)})$ are biholomorphic? This is part of a more general question that was posed in Section \ref{SecDOP}: in that section we used a fixed metric, $g$, to define $\overline{\partial}$-operators on $E_{\mathbb{C}}$ and thus complex structures on $\mathcal{T}(g)$---but if we were to choose another metric $g'$ then do we obtain new complex manifolds by considering $\mathcal{T}(g')$?

To address these questions, let us first recall some of the details of that section. 
Let $(E,J) \longrightarrow (M,I)$ be a holomorphic vector bundle, considered as a real bundle with fiberwise complex structure $J$, that is fibered over a complex manifold. Defining $<,>$ and $\overline{\partial}$ to be the standard inner product and $\overline{\partial}$-operator on $E^{1,0} \oplus E^{*1,0}$, let us consider the $\overline{\partial}$-operator $\bar{\partial} + \mathcal{D}'^{0,1}$, where $\mathcal{D}'^{0,1} \in \Gamma(T^{*0,1} \otimes \mathfrak{so}(E^{1,0} \oplus E^{*1,0})).$ If $g$ is a fiberwise metric on $E$ that is compatible with $J$ then, as in Section \ref{SecDOP}, we can use the orthogonal isomorphism \[1 \oplus g: (E_{\mathbb{C}}= E^{1,0} \oplus E^{0,1}, \frac{g}{2}) \longrightarrow (E^{1,0} \oplus E^{*1,0}, <,>)\]
to obtain the $\overline{\partial}$-operator $\nabla^{Ch(0,1)} + \mathcal{D}_{g}^{0,1}$ on $E_{\mathbb{C}}$ as well as the complex structure $\mathcal{J}^{(\nabla^{Ch}+\mathcal{D}_{g}, I)}$ on $\mathcal{T}(g)$. (Here, $\mathcal{D}_{g}= \mathcal{D}_{g}^{0,1} +\overline{\mathcal{D}_{g}^{0,1}}$.) 

Similarly, if $g'$ is another fiberwise metric that is compatible with $J$ then we have the complex structure $\mathcal{J}^{(\nabla^{Ch'}+\mathcal{D}_{g'}, I)}$ on $\mathcal{T}(g')$. The goal then is to use Proposition \ref{PropIS} to show that the complex manifolds $\mathcal{T}(g)$ and $\mathcal{T}(g')$ are equivalent under a fiberwise biholomorphism.

First note, that if we compose the map $(1 \oplus g)$ with $(1 \oplus g')^{-1}$ then we obtain the following isomorphism of holomorphic vector bundles: 
\begin{align*}
 (E_{\mathbb{C}},\nabla^{Ch(0,1)}+\mathcal{D}^{0,1}_{g}&) \longrightarrow (E_{\mathbb{C}},\nabla^{Ch'(0,1)}+\mathcal{D}^{0,1}_{g'})\\
v^{1,0}+v^{0,1} & \longrightarrow (v^{1,0} + g'^{-1}gv^{0,1}),
\end{align*} 
where we have used the decomposition, $E_{\mathbb{C}}= E^{1,0} \oplus E^{0,1}$. As this is an orthogonal map from $(E_{\mathbb{C}},g)$ to $(E_{\mathbb{C}},g')$, by Proposition \ref{PropIS} we have

\begin{prop}
There exists a fiber preserving biholomorphism between $(\mathcal{T}(g),\mathcal{J}^{(\nabla^{Ch}+\mathcal{D}_{g}, I)})$ and $(\mathcal{T}(g'),\mathcal{J}^{(\nabla^{Ch'}+\mathcal{D}_{g'}, I)})$. 
\end{prop}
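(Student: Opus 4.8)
The plan is to exhibit an explicit bundle map $A$ satisfying the hypotheses of Proposition \ref{PropIS} and then invoke that proposition directly. Since both complex structures $\mathcal{J}^{(\nabla^{Ch}+\mathcal{D}_{g},I)}$ and $\mathcal{J}^{(\nabla^{Ch'}+\mathcal{D}_{g'},I)}$ were obtained by transferring the \emph{same} $\overline{\partial}$-operator $\overline{\partial}+\mathcal{D}'^{0,1}$ on the fixed bundle $(E^{1,0}\oplus E^{*1,0},<,>)$ along the two orthogonal isomorphisms $1\oplus g$ and $1\oplus g'$, the natural candidate is the composite $A:=(1\oplus g')^{-1}\circ(1\oplus g)$. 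Writing $E_{\mathbb{C}}=E^{1,0}\oplus E^{0,1}$, I would first record that $A$ acts by $v^{1,0}+v^{0,1}\mapsto v^{1,0}+g'^{-1}g\,v^{0,1}$, i.e. it is the identity on $E^{1,0}$ and equals $g'^{-1}g$ on $E^{0,1}$; in particular $A$ is a genuine $\mathbb{C}$-linear bundle isomorphism of $E_{\mathbb{C}}$ covering the identity of $M$.

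Next I would check the two hypotheses of Proposition \ref{PropIS}. For holomorphicity, the point is that $1\oplus g$ is by construction an isomorphism of holomorphic bundles $(E_{\mathbb{C}},\nabla^{Ch(0,1)}+\mathcal{D}_{g}^{0,1})\to(E^{1,0}\oplus E^{*1,0},\overline{\partial}+\mathcal{D}'^{0,1})$, and likewise for $g'$; hence $A$, being the composite of one such isomorphism with the inverse of another onto the common target, intertwines $\nabla^{Ch(0,1)}+\mathcal{D}_{g}^{0,1}$ with $\nabla^{Ch'(0,1)}+\mathcal{D}_{g'}^{0,1}$ and is therefore an isomorphism of holomorphic vector bundles. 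No curvature computation is needed here: the holomorphicity of $A$ is inherited from the two transfer maps and never requires comparing the two Chern connections directly.

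For orthogonality, I would use that each transfer map is an isometry onto $(E^{1,0}\oplus E^{*1,0},<,>)$, namely $<(1\oplus g)v,(1\oplus g)w>=\tfrac{1}{2}g(v,w)$ and similarly for $g'$. Since $(1\oplus g')(Av)=(1\oplus g)v$ by the definition of $A$, this yields $\tfrac{1}{2}g'(Av,Aw)=<(1\oplus g)v,(1\oplus g)w>=\tfrac{1}{2}g(v,w)$, so that $g'(Av,Aw)=g(v,w)$; thus $A$ is orthogonal from $(E_{\mathbb{C}},g)$ to $(E_{\mathbb{C}},g')$. With both hypotheses in hand, Proposition \ref{PropIS} immediately produces the desired fiber-preserving biholomorphism between $(\mathcal{T}(g),\mathcal{J}^{(\nabla^{Ch}+\mathcal{D}_{g},I)})$ and $(\mathcal{T}(g'),\mathcal{J}^{(\nabla^{Ch'}+\mathcal{D}_{g'},I)})$.

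The only genuinely delicate point --- though it is more bookkeeping than obstacle --- is ensuring that the two complex structures really are transfers of one and the same $\mathcal{D}'^{0,1}$, so that the common target $(E^{1,0}\oplus E^{*1,0},<,>,\overline{\partial}+\mathcal{D}'^{0,1})$ is literally identical for both metrics; this is exactly the convention fixed in Section \ref{SecDOP}. Once that is granted, the overall factor $\tfrac{1}{2}$ in the form $<,>$ cancels between source and target and plays no role, and the entire argument reduces to the observation that the two metrics furnish two trivializations of a single holomorphic bundle, with $A$ the resulting transition map.
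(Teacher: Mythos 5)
Your proof is correct and follows the same route as the paper: the paper likewise takes the composite $(1\oplus g')^{-1}\circ(1\oplus g)$, writes it explicitly as $v^{1,0}+v^{0,1}\mapsto v^{1,0}+g'^{-1}g\,v^{0,1}$, observes that it is a holomorphic isomorphism orthogonal from $(E_{\mathbb{C}},g)$ to $(E_{\mathbb{C}},g')$, and invokes Proposition \ref{PropIS}. Your verification of the isometry property, with the factor $\tfrac{1}{2}$ cancelling, is a correct elaboration of what the paper leaves implicit.
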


In particular, if we set $\mathcal{D}'^{(0,1)}$ to zero, we have:  
\begin{prop}Let $(E,J) \longrightarrow (M,I)$ be a holomorphic vector bundle that is equipped with two Hermitian metrics $g$ and $g'$. Then $(\mathcal{T}(g),\mathcal{J}^{(\nabla^{Ch},I)})$ and $(\mathcal{T}(g'),\mathcal{J}^{(\nabla^{Ch'}, I)})$ are biholomorphic. 
\end{prop}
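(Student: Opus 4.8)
The plan is to recognize this statement as the $\mathcal{D}'^{0,1}=0$ specialization of the immediately preceding proposition, and hence to obtain it directly from Proposition \ref{PropIS}. Setting $\mathcal{D}'^{0,1}=0$ forces $\mathcal{D}_{g}=\mathcal{D}_{g'}=0$, so that the two $\overline{\partial}$-operators in play reduce to $\nabla^{Ch(0,1)}$ and $\nabla^{Ch'(0,1)}$ on $E_{\mathbb{C}}$, and the twistor spaces are exactly $(\mathcal{T}(g),\mathcal{J}^{(\nabla^{Ch},I)})$ and $(\mathcal{T}(g'),\mathcal{J}^{(\nabla^{Ch'},I)})$. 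Thus the whole task is to verify the two hypotheses of Proposition \ref{PropIS} for a suitable bundle map.

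First I would write down the candidate map
\[ A = (1\oplus g')^{-1}\circ(1\oplus g): E_{\mathbb{C}} \longrightarrow E_{\mathbb{C}}, \]
which, under the splitting $E_{\mathbb{C}}=E^{1,0}\oplus E^{0,1}$, acts by $v^{1,0}+v^{0,1}\mapsto v^{1,0}+g'^{-1}g\,v^{0,1}$. Next I would check that $A$ is an isomorphism of holomorphic vector bundles from $(E_{\mathbb{C}},\nabla^{Ch(0,1)})$ to $(E_{\mathbb{C}},\nabla^{Ch'(0,1)})$. This follows at once from the identification made in Section \ref{SecCC}: each of $1\oplus g$ and $1\oplus g'$ is an isomorphism of holomorphic bundles onto the fixed bundle $(E^{1,0}\oplus E^{*1,0},\overline{\partial})$, so their composite $A$ intertwines the two Chern $\overline{\partial}$-operators. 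Concretely, if $\{e_i\}$ is a local holomorphic frame of $E^{1,0}$ then $\{e_i,g^{-1}(e^i)\}$ and $\{e_i,g'^{-1}(e^i)\}$ are holomorphic frames for the respective operators, and $A$ carries the former to the latter. I would then observe that $A$ is orthogonal from $(E_{\mathbb{C}},g)$ to $(E_{\mathbb{C}},g')$, since $1\oplus g$ and $1\oplus g'$ are isometries onto $(E^{1,0}\oplus E^{*1,0},<,>)$ from $(E_{\mathbb{C}},g/2)$ and $(E_{\mathbb{C}},g'/2)$ respectively, and the common factor of $\tfrac12$ cancels under composition.

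With both hypotheses verified, applying Proposition \ref{PropIS} to $A$ yields the desired fiber-preserving biholomorphism between $(\mathcal{T}(g),\mathcal{J}^{(\nabla^{Ch},I)})$ and $(\mathcal{T}(g'),\mathcal{J}^{(\nabla^{Ch'},I)})$. The only genuinely non-formal point is the holomorphicity of $A$, i.e. that it intertwines $\nabla^{Ch(0,1)}$ and $\nabla^{Ch'(0,1)}$; but this is precisely the content of the holomorphic-bundle isomorphism $1\oplus g$ already established in Section \ref{SecCC}, so no new computation is required. The remaining points — orthogonality of $A$ and the fact that it covers the identity on $M$, hence induces a fiberwise map — are immediate, so I expect the proof to be short once the reduction to Proposition \ref{PropIS} is made explicit.
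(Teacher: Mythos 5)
Your proposal is correct and matches the paper's own route exactly: the paper states this proposition as the $\mathcal{D}'^{0,1}=0$ specialization of the preceding one, whose proof uses precisely the composite $(1\oplus g')^{-1}\circ(1\oplus g)$ together with Proposition \ref{PropIS}. Your verification of the two hypotheses (holomorphicity via the identification with $(E^{1,0}\oplus E^{*1,0},\overline{\partial})$ and orthogonality up to the cancelling factor of $\tfrac12$) is the same argument the paper gives.
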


\section{Acknowledgments}
I would like to thank Blaine Lawson and Nigel Hitchin for helpful discussions. I would also like to thank Alexander Kirillov and Martin Ro\v{c}ek.

\textsc{Department of Mathematics, UC Riverside, Riverside, CA 92521} \\

\textit{E-mail Address:} \texttt{gindis@ucr.edu} 

\end{large}
\end{document}